\newtheorem{lemma}{Lemma}
\newtheorem{theorem}{Theorem}
\newtheorem{proposition}{Proposition}
\newtheorem{problem}{Problem}
\newtheorem{corollary}{Corollary}
\newtheorem{remark}{Remark}
\newcommand{\G}{\Gamma}
\newcommand{\B}{\beta}
\newcommand{\g}{\gamma}
\newcommand{\proof}{\noindent{\em Proof: }}
\newcommand{\forme}[1]{}
\def\wbull{\hfill\vrule height .9ex width .8ex depth -.1ex}
\begin{document}

\date{}
\title{Unique identification and domination of edges in a graph: The vertex-edge dominant edge metric dimension}
\author{{\bf Hafiz Muhammad Ikhlaq}\\
Department of Mathematics,\\
COMSATS University Islamabad (CUI), Lahore Campus,\\
Lahore, 54000, Pakistan\\
\\
{\bf Sakander Hayat}\\
School of Mathematics and Information Sciences,\\
Guangzhou University, Guangzhou,\\
510006, Guangdong, P.R. China\\
e-mail: sakander1566@gmail.com\\
\\
{\bf Hafiz Muhammad Afzal Siddiqui}\\
Department of Mathematics,\\
COMSATS University Islamabad (CUI), Lahore Campus,\\
Lahore, 54000, Pakistan\\
e-mail: hmasiddiqui@gmail.com\\
}
\maketitle

\begin{abstract}
Dominating sets and resolving sets have important applications in control theory and computer science.
In this paper, we introduce an edge-analog of the classical dominant metric dimension of graphs.
By combining the concepts of a vertex-edge dominating set and an edge resolving set, we
introduce the notion of a vertex-edge dominant edge resolving set of a graph. We call the minimum cardinality
of such a set in a graph $\G$, the vertex-edge dominant edge metric dimension $\g_{emd}(\G)$ of $\G$.
The new parameter $\g_{emd}$ is calculated for some common families such as paths, cycles,
complete bipartite graphs, wheel and fan graphs. We also calculate $\g_{emd}$ for some Cartesian
products of path with path and path with cycle. Importantly, some general results and bounds are presented for this new parameter.
We also conduct a comparative analysis of $\g_{emd}$ with the dominant metric dimension of graphs.
Comparison shows that these two parameters are not comparable, in general.
Upon considering the class of bipartite graphs, we show that
$\g_{emd}(T_n)$ of a tree $T_n$ is always less than or equal to its dominant metric dimension.
However, we show that for non-tree bipartite graphs, the parameter is not comparable just like general graphs.
Based on the results in this paper, we propose some open problems at the end.
\end{abstract}
\centerline{{\bf 2020 Mathematics Subject Classification:} 05C12, 05C69}
\begin{quote}
{\bf Keywords:}
Graph; Resolving set; Dominating set; Edge resolving set; Vertex-edge dominating set; Vertex-edge dominant edge resolving set
\end{quote}

\section{Introduction}
Distance based graph-theoretic parameters have a long history. In particular, every connected graph $\G$ is a metric space
i.e. $(V(\G),d)$, where the distance function $d:=d(x,y)$ is the length of a shortest path between $x$ and $y$. See more about this
direction in the book by Deza \& Laurent \cite{DL97}. From application perspective, Harold Wiener \cite{W47} in 1947 defined
path number to be the sum of distances over all the unordered pairs of vertices and showed that it correlates well with the
boiling points of alkanes. See a recent survey by Goddard \& Oellermann \cite{GO2011} on distance in graphs.

Independently, Slater \cite{sla} in 1975 and Harary \& Melter \cite{har} in 1976 introduced a distance related parameter known
as the metric dimension of graphs. The metric dimension is the cardinality of a minimum resolving set in a graph. Resolving sets
and metric dimension have numerous applications in various scientific disciplines such as network discovery \& verification \cite{bee},
mastermind game \cite{cac}, robot navigation \cite{khu}, coin weighing problems \cite{seb}, combinatorial search and optimization \cite{seb},
pattern recognition \& image processing \cite{Mel}, among others. The success of the metric dimension motivated researcher to propose other
resolvability related parameters such as independent resolving set and its minimality by Chartrand et al. \cite{char}, strong resolving set
in graph and digraphs and its minimality by Oellermann \& Peters-Fransen \cite{oel}, local resolving set and metric dimension by
Okamoto et al. \cite{oka}, among others. For more mathematical results regarding resolvability, we refer the reader to a survey by Chartrand \& Zhang \cite{cha}.

Dominating sets and the domination number of graphs is another important research direction of graph theory which has a rich history.
Berge \cite{BER} in 1958 gave the idea of dominating sets and their applications. By Berge \cite{BER}, the domination number
was referred as the ``coefficient of external stability'' in network analysis. The name dominating set was assigned by Ore \cite{Ore1960}
in 1962. In 1977, Cockayne \& Hedetniemi \cite{COC} made an interesting and extensive survey of the results known at that time about dominating sets
in graphs. Rao \& Sreenivansan \cite{RAO} studied the split domination number of arithmetic graphs.
For the detailed mathematical treatment, see the book on domination theory of graph by Haynes et al. \cite{HHS1998}.

Brigham et al. \cite{bri} combined the concepts of a resolving and a dominating set to propose
resolving dominating set of a graph. Cardinality of a minimum such set is known as the dominant metric dimension $\g_{md}(\G)$.
Brigham et al. \cite{bri} calculated the dominant metric dimension of some classical families such as paths, cycles,
complete bipartite etc. Moreover, they proved some general results such as
$$\max \{ \B(\G), \g(\G)\} \leq \g_{md}(\G) \leq \B(\G) + \g(\G).$$
Henning \&  Oellermann \cite{hen}, later on, combined the concepts of a locating dominating set and a resolving set.
The cardinality of such a minimum set in called the metric locating-dominating number $\g_{mld}(\G)$ of a graph $\G$.
Henning \& Oellermann \cite{hen} showed $\g_{mld}(\G)$ satisfy $\g(\G) \leq \g_{mld}(\G) \leq n-1$ for every $n$-vertex
graph $\G$. Gonzalez et al. \cite{GON} further studied $\g_{mld}(\G)$ and showed that it satisfies
$$\max\{ \B(\G),\g(\G)\}\leq \g_{mld}(\G)\leq\B(\G)+\g(\G).$$
Susilowati et al. \cite{SUS} showed that the lower bound on $\g_{mld}(\G)$ by Henning \& Oellermann is not sharp,
whereas,  the upper bound on $\g_{md}(\G)$ by Brigham et al. \cite{bri} and the upper bound on $\g_{mld}(\G)$ by
Gonzalez et al. \cite{GON} are not sharp.

Kelenc et al. \cite{kele} proposed the idea of a edge resolving set i.e. resolvability of edges in a graph by its vertices.
The corresponding parameter is known as the edge metric dimension $\B_e(\G)$ of a graph $\G$. They calculated $\B_e(\G)$ for
classical families and provided comparison between the metric dimension and edge metric dimension of graphs and showed that they
are not comparable, in general. Such variations has attracted attention of several researches, see for instance,
\cite{Huang,Knor,Zhang,Zhu}.

Boutrig et al. \cite{bou} introduced an edge-analog of the dominating set i.e. vertices which dominate edges of a graph.
They called such a set a vertex-edge dominating set and the cardinality of such a minimum set is called the vertex-edge
domination number. Lewis \cite{Lew} in his PhD thesis studied the vertex-edge domination in detail. Lewis et al. \cite{Lew1}
provided comparison of domination number and the vertex-edge domination number of graphs.
Peters \cite{Pet} in his PhD thesis provided a detailed study on the vertex-domination in graphs and provided
several upper and lower bounds on the vertex-edge domination number. Thakkar \& Jamvecha \cite{tha} further studied
the vertex-edge domination in graphs.

In this paper, we propose to study vertex-edge dominant edge resolving sets in graphs. The minimum cardinality of such a set in a graph $\G$ is called
the vertex-edge dominant edge metric dimension of $\G$. In other words, we introduce the edge version of the
dominant metric dimension of graphs. This paper is organized as follows: In Chapter \ref{prelim}, we introduce
all the necessary definitions and preliminary results which are used in subsequent sections. In Chapter \ref{sec3},
we introduce the concept of a vertex-edge dominant edge resolving set and the vertex-edge dominant edge metric
dimension $\g_{emd}(\G)$ of a graph $\G$. We calculate $\g_{emd}$ for classical families such as paths, cycles,
complete graphs, complete bipartite graphs, wheel and fan graphs. Chapter \ref{sec4} computes $\g_{emd}$ for some
Cartesian products such as path vs. path and path vs. cycle. Chapter \ref{sec5} presents some general results and
bounds on $\g_{emd}$. Chapter \ref{sec6} draws a comparison between the dominant metric dimension and its edge
version introduced in this paper. Although, these two parameters are not comparable in general, for trees specifically,
we show that $\g_{emd}(T_n)$ of a tree $T_n$ is always less than or equal to its dominant metric dimension.
Chapter \ref{conc} concludes the findings and propose some open problems which naturally arise from this study.

\section{Preliminaries}\label{prelim}

All graphs in this paper are finite, simple, undirected and connected. A simple graph $\G$ is a pair $\G=(V,E)$ consists of a vertex set $V$ and an edge set $E\subseteq{V\choose 2}$.
The cardinality $\mid V\mid$ (resp. $\mid E\mid$) of $V$ (resp. $E$) is called the order (resp. size) of $\G$.
For a vertex $x\in V$, the open (resp. closed) neighborhood of $x$ is defined as $N_{\G}(x)=\{y\in V: xy\in E\}$
(resp. $N_{\G}[x]=N_{\G}(x)\cup\{x\}$). For $x,y\in V$, let $\ell\big(p(x,y)\big)$ be the length of a path $p(x,y)$
between $x$ \& $y$. The distance between $x,y\in V$ is defined to be the length of a shortest path
between $x$ \& $y$ i.e. $d_{\G}(x,y)=\min\{\ell\big(p(x,y)\big):x,y\in V\}$. For a vertex $w\in V$
and an edge $e=xy$, the distance between $w$ and $e$ is defined to be $d_\G(w,e)=\min\{d_{\G}(w,x),d_{\G}(w,y)\}$. we usually omit $\G$ from $N_{\G}(x)$, $N_{\G}[x]$,
and $d_{\G}(x,y)$ and write $N(x)$, $N[x]$, and $d(x,y)$ instead. The path (resp. cycle) graph
on $n$ vertices is usually denoted by $P_n$ (resp. $C_n$). Similarly, the complete graph of order $n$
is denoted by $K_n$. The star graph of order $n+1$ is denoted by $S_{1,n}$. Moreover, the wheel and
fan graph on $n+1$ vertices are denoted by $W_{1,n}$ and $F_{1,n}$, respectively. Similarly, the complete
bipartite graph is denoted by $K_{n,m}$.

For a graph $\G$, a subset $S \subseteq V$ is said to be a dominating set if for every $x \in V \setminus S$,
we have $N[x]\cap S\neq\emptyset.$  Minimum cardinality of such a set in $\G$ is knows as the dominating number
usually denoted by $\g(\G)$. A vertex $w\in V$ is said to resolve a pair of vertices $x,y\in V$, if $d(w,x)\neq d(w,y)$.
Let $B=\{x_1, x_2,\ldots,x_p\}\subset V$ be an ordered subset. The representation/code $r(y|B)$ of a vertex $y$ with respect to $B$ is
the distance vector $r(y|B)=\big(d(y, x_1), d(y, x_2),\ldots, d(y, x_p)\big)$. Such a subset $B$ is said to be
a resolving set of $\G$ if every $y\in V\setminus B$ has a distinct code corresponding to $B$. Minimal cardinality
of such a resolving set in $\G$ is called the metric dimension of $\G$ usually denoted by $\B(\G)$.

Combining the concepts of a dominating and a resolving set in a graph yields a dominant resolving set.
Formally, an ordered subset $D \subset V$ in $\G$ is said to be a dominant resolving set, if $D$ is simultaneously
a dominating set and a resolving set. Minimum cardinality of such as set is called the dominant metric dimension
of $\G$ and is denoted by $\g_{md}(\G)$. A dominant resolving set of cardinality $\g_{md}(\G)$ in $\G$ is called a
dominant basis of $\G$. For classical families of graphs, Susilowati et al. \cite{SUS} showed the following result.
\begin{theorem}\emph{\cite{SUS}}
The dominant metric dimension $\g_{md}$ of some classical families are calculated as follows:
\begin{itemize}
\item[\emph{(i)}] For $n>4$, we have $\g_{md}(P_n)= \g(P_n)= \lceil \frac{n}{3} \rceil$,
\item[\emph{(ii)}] For $n\geq7$, we have $\g_{md}(C_n)= \g(C_n)= \lceil \frac{n}{3} \rceil$,
\item[\emph{(iii)}] For $n\geq2$, we have $\g_{md}(S_{1,n})=n$,
\item[\emph{(iv)}] For $n\geq2$, we have $\g_{md}(K_n)= \B(K_n)= n-1$,
\item[\emph{(v)}] For $n,m\geq2$, we have $\g_{md}(K_{n,m})= \B(K_{n,m})= n+m-2.$
\end{itemize}
\end{theorem}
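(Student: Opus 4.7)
The plan is to treat the five families separately, in each case producing a lower bound from standard parameters and matching it with an explicit construction that is simultaneously dominating and resolving. Since every dominant resolving set is both a dominating set and a resolving set, the inequality $\g_{md}(\G) \geq \max\{\B(\G),\g(\G)\}$ is automatic, so essentially all the work is to exhibit a tight construction (or, when the lower bound is \emph{not} $\max\{\B,\g\}$, to argue a direct combinatorial obstruction).

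For the path $P_n$ in (i), I would take the canonical minimum dominating set $D=\{v_2,v_5,v_8,\ldots\}$ of size $\lceil n/3\rceil$, matching $\g(P_n)$, and verify that $D$ also resolves. The verification uses the fact that $d(v_i,v_k)=|i-k|$ is strictly monotone on either side of a landmark $v_k$, so a code collision would require two vertices to sit symmetrically about \emph{every} landmark simultaneously; for $n>4$ the set $D$ has at least two landmarks with different midpoints, which rules this out. For the cycle $C_n$ in (ii) with $n\geq 7$, I would use the evenly spaced set $\{v_1,v_4,v_7,\ldots\}$ of size $\lceil n/3\rceil$; the two-way cyclic symmetry must be broken using that at least three landmarks are present, and any would-be twin pair in $C_n$ is forbidden by examining the two equidistant arcs from each landmark and checking that the pattern of nearest-landmark indices separates the vertices.

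For the star $S_{1,n}$ in (iii) the content is the lower bound: any two leaves are metric twins (identical distances to every vertex outside the pair), so a resolving set misses at most one leaf; the missed leaf is dominated only by the center, forcing either the leaf or the center to be added, which yields $\g_{md}(S_{1,n})\geq n$, and the set of all $n$ leaves realizes this bound. For $K_n$ in (iv), every pair of vertices is a metric twin pair, which immediately gives $\B(K_n)=n-1$, and any $(n-1)$-subset dominates trivially. For $K_{n,m}$ in (v), vertices inside each part are mutual twins, forcing at least $n-1$ landmarks from one side and $m-1$ from the other; a set of size $n+m-2$ built this way contains at least one vertex of each part (using $n,m\geq 2$), and those vertices dominate the opposite part.

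The main obstacle is the resolvability check for the path and cycle constructions. In the classical setting a single well-placed vertex resolves $P_n$ and two non-antipodal vertices resolve $C_n$, but the domination constraint rigidly fixes the landmarks on an arithmetic progression with step $3$, and one must rule out accidental code collisions produced by this rigidity. I would isolate the worst case, a pair of vertices equidistant from every landmark, and eliminate it via a position-and-parity argument that uses the hypotheses $n>4$ and $n\geq 7$; the small residues $n\bmod 3$ at the end of the path or cycle require only a minor shift of the final landmark, which does not change the cardinality $\lceil n/3\rceil$.
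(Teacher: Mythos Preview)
The paper does not actually prove this theorem: it appears in the Preliminaries section with the citation \cite{SUS} and no argument, so there is no in-paper proof to compare your proposal against. Your outline is a sound direct proof of the result and would stand on its own; the lower bound $\g_{md}(\G)\ge\max\{\B(\G),\g(\G)\}$ together with the explicit constructions you describe is exactly the natural route.

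One small point worth tightening in part (iii): your sentence ``the missed leaf is dominated only by the center, forcing either the leaf or the center to be added'' implicitly assumes the candidate set consists of $n-1$ leaves and nothing else. To make the lower bound airtight you should also dispose of sets that contain the center together with only $n-2$ leaves; this is immediate (the two omitted leaves are still twins, so such a set fails to resolve), but it should be said explicitly. With that addition, and the routine residue adjustments you already flag for $P_n$ and $C_n$, the argument is complete.
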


Kelenc et al. \cite{kele} introduced an edge version of the metric dimension of graphs.
The vertex $x$ is said to resolves the edges $e, f\in E$, if $d(e,x)\neq d(f,y)$.
For an ordered subset $A=\{x_1, x_2,\ldots, x_q\}\subset V$, the representation/code $r(e|A)$ of
an edge $e$ with respect to $A$ is the edge distance vector $r(e|A)=\big(d(e,x_1),d(e,x_2),\ldots,d(e,x_q)\big)$.
For such an $A$, if every edge of $\G$ has a different code, then $A$ is said to be an edge-resolving set of $\G$.
Denoted by $\B_e(\G)$, the minimum cardinality of an edge resolving set in $\G$ is called the edge metric
dimension of $\G$. An edge resolving set of cardinality $\B_e(\G)$ in $\G$ is known as the edge metric
basis of $\G$. For classical families of graphs, Kelenc et al. \cite{kele} showed the following results.
\begin{theorem}\emph{\cite{kele}}
The edge metric dimension of certain families are computed as follows:
\begin{itemize}
\item[\emph{(i)}] For a graph $\G$, $\B_e(\G)=1$ if and only if $\G=P_n$.\label{tab:EP}
\item[\emph{(ii)}] For $n\geq3$, we have $\B_e(C_n) = 2$.\label{tab:EC}
\item[\emph{(iii)}] For $n\geq2$, we have $\B_e(K_n) = n-1$.\label{tab:EK}
\item[\emph{(iv)}] For $m,n\geq2$, we have $\B_e(K_{n,m})=n+m-2$.\label{tab:EKB}
\item[\emph{(v)}] For $r\geq t\geq2$, we have $\B_e(\G=P_r \Box P_t) =2 $.\label{thm:edimLn}
\item[\emph{(vi)}] For $n\geq3$, we have
\begin{equation*}
\B_e(W_{1,n})=\left\{
                       \begin{array}{ll}
                         n, & \hbox{$n=3,4$;} \\
                         n-1, & \hbox{$n\geq5$.}
                       \end{array}
                     \right.
\end{equation*}\label{tab:EW}
\item[\emph{(vii)}] For $n\geq1$, we have
\begin{equation*}
\B_e(F_{1,n})=\left\{
                       \begin{array}{ll}
                         n, & \hbox{$n=1,2,3$;} \\
                         n-1, & \hbox{$n\geq4$.}
                       \end{array}
                     \right.
\end{equation*}\label{tab:EF}
\end{itemize}
\end{theorem}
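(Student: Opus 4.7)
The plan is to handle each of the seven parts by a common two-step pattern: establish a lower bound by exhibiting pairs of edges that no small vertex set can separate, then establish the matching upper bound by producing an explicit edge-resolving set and verifying distinctness of its codes.

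For (i), the direction $\B_e(P_n)\le 1$ is immediate, since an endpoint $v_1$ of $P_n=v_1v_2\cdots v_n$ sees the $i$-th edge at distance exactly $i-1$. For the converse, if $\{x\}$ edge-resolves $\G$, then each BFS level around $x$ contains at most one edge; a short inductive argument then forces $\G$ to be a path with $x$ at one endpoint. Part (ii) is then quick: $C_n$ is not a path, so $\B_e(C_n)\ge 2$, while any pair of adjacent cycle vertices edge-resolves $C_n$ because the edge-to-vertex distance is monotone along each of the two arcs leaving such a vertex.

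For (iii) and (iv), the code of an edge $e$ relative to a vertex $x$ is $0$ if $x$ is an endpoint of $e$ and $1$ otherwise. Thus if a candidate set $A$ omits two vertices $u,v$, then two distinct edges missing both $u$ and $v$ receive the same all-ones code; counting gives $|A|\ge n-1$ in $K_n$ and $|A|\ge n+m-2$ in $K_{n,m}$. The matching upper bound follows by removing a single vertex (one from each side in the bipartite case) and checking that the resulting $0/1$-signatures are pairwise distinct. For (v), two suitably chosen corners of the grid $P_r\Box P_t$ form an edge-resolving set: a short case analysis, separating horizontal from vertical edges and exploiting the fact that Manhattan distances to the two corners recover vertex coordinates, rules out all code collisions; the lower bound $\B_e\ge 2$ is again inherited from (i).

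The main obstacle will be parts (vi) and (vii), where the hub distorts the distance pattern and forces case splits in $n$. In the generic regime ($n\ge 5$ for $W_{1,n}$ and $n\ge 4$ for $F_{1,n}$) I would take all but one rim (resp.\ path) vertex as the resolving set and verify that the two edges incident to the omitted vertex remain distinguishable through the hub. For the lower bound $\ge n-1$ I would use a twin-edge argument: consecutive pairs of rim/path vertices $u_i,u_{i+1}$ induce pairs of edges whose distance patterns coincide at every non-neighbour, so any edge-resolving set must hit nearly all such consecutive pairs. The exceptional small cases must be settled by direct code enumeration, since the generic lower-bound argument degenerates when the hub dominates the graph and too many edges collapse to distance $0$ or $1$.
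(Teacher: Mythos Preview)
The paper does not prove this theorem at all: it is quoted verbatim from Kelenc et al.\ \cite{kele} in the Preliminaries section as a known result, so there is no in-paper proof to compare your proposal against.

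As a stand-alone sketch your outline is broadly sound and follows the standard route for each family, but there is a genuine slip in the lower bound for (iii). Your argument (``if $A$ omits two vertices $u,v$, then two distinct edges missing both $u$ and $v$ receive the same all-ones code'') only forces $|A|\ge n-2$: when exactly two vertices $u,v$ lie outside $A$, the edge $uv$ is the \emph{only} edge with an all-ones code, so no collision arises that way. The correct observation is that for any third vertex $w$ the edges $uw$ and $vw$ receive identical codes over $A$ (a $0$ exactly at the coordinate $w$ if $w\in A$, and $1$ elsewhere), which does force $|A|\ge n-1$. The analogous fix is needed for $K_{n,m}$ in (iv): with one vertex omitted from each part, edges of the form $uw$ and $vw$ (with $u,v$ the omitted vertices on the same side and $w$ on the other side) again collide.
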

\begin{lemma}\emph{\cite{kele}}\label{thm:lemma}
Let $\G$ be a graph with $\Delta(\G)=n-1$. If there exist at least two vertices with degree $n-1$, then $\B_e(\G) = n-1$.
\end{lemma}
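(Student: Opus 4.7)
My plan is to prove both bounds $\beta_e(\G)\geq n-1$ and $\beta_e(\G)\leq n-1$ separately, letting $u,v$ be two vertices of degree $n-1$. The crucial property I would exploit throughout is that, because both $u$ and $v$ are universal, $d(x,u)=d(x,v)=1$ for every $x\in V\setminus\{u,v\}$, so for any edge of the form $uz$ with $z\neq u$, and any vertex $x\notin\{u,z\}$, we have $d(x,uz)=\min(d(x,u),d(x,z))=1$.

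For the upper bound, I would take $S=V\setminus\{w\}$ for any vertex $w$ and show it is an edge resolving set. The observation is that for an edge $e=xy$, the set of coordinates on which $r(e|S)$ equals $0$ is exactly $\{x,y\}\cap S$, since $d(s,e)=0$ iff $s\in\{x,y\}$. Two distinct edges $e,e'$ can therefore share the same code only if $\{x,y\}\cap S=\{x',y'\}\cap S$. Because $|V\setminus S|=1$, the only way this can happen is $\{x,y\}=\{x',y'\}$, contradicting $e\neq e'$. Hence $\beta_e(\G)\leq n-1$.

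For the lower bound, I would assume for contradiction that $S$ is an edge resolving set with $|S|\leq n-2$, so $|V\setminus S|\geq 2$, and split into three cases depending on which vertices lie outside $S$. (i) If $V\setminus S$ contains two vertices $w_1,w_2\in V\setminus\{u,v\}$, look at the edges $uw_1$ and $uw_2$: using the universality of $u$, every $x\neq u,w_1,w_2$ gives distance $1$ to both edges, while $u$ gives $0$ to both; only $w_1$ and $w_2$ distinguish them, and both are outside $S$. (ii) If $\{u,v\}\subseteq V\setminus S$, pick any $w\in V\setminus\{u,v\}$ and look at $uw$ versus $vw$: every $x\notin\{u,v,w\}$ gives distance $1$ to each, and $w$ gives $0$ to each, so only $u$ and $v$ resolve the pair, but neither is in $S$. (iii) If $V\setminus S$ contains exactly one of $u,v$, say $u$, together with some $w\in V\setminus\{u,v\}$, look at $uv$ versus $vw$: $v$ gives $0$ to both, every $x\in V\setminus\{u,v,w\}$ gives $1$ to both, so only $u$ and $w$ resolve them, and again neither is in $S$. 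The symmetric case with $v\in V\setminus S$ is identical. In each case $S$ fails to resolve some pair of edges, contradicting the assumption.

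The work is almost entirely in the case analysis for the lower bound, and the \emph{potential obstacle} is making sure the case split truly exhausts every configuration of $V\setminus S$ when $|V\setminus S|\geq 2$; once the three cases above are laid out it is clear they do, because $V\setminus S$ either contains two vertices from $V\setminus\{u,v\}$ (case i), contains both of $\{u,v\}$ (case ii), or contains exactly one of $\{u,v\}$ together with at least one vertex of $V\setminus\{u,v\}$ (case iii). No deeper structure of $\G$ beyond the two universal vertices is needed.
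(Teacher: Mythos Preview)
The paper does not give its own proof of this lemma; it is quoted from Kelenc et al.\ \cite{kele} as a known result. Your argument is correct and self-contained: the upper bound $\beta_e(\G)\leq n-1$ holds for any graph via the zero-coordinate observation, and your three-case analysis for the lower bound is exhaustive (any two-element subset of $V\setminus S$ either avoids $\{u,v\}$, contains $\{u,v\}$, or meets $\{u,v\}$ in exactly one vertex) and in each case produces two edges with identical codes over $S$.
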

\begin{proposition}\emph{\cite{kele}}\label{thm:edimn-1}
Let $\G$ be a graph of order $n$. If there is a vertex $v \in V(\G)$ of degree $n-1$, then,
either $\B_e(\G)=n-1$ or $\B_e(\G)=n-2$.
\end{proposition}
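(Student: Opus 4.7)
The plan is to prove two separate bounds: the lower bound $\B_e(\G)\geq n-2$ using a structural argument about edges incident to the dominating vertex $v$, and the upper bound $\B_e(\G)\leq n-1$ by exhibiting an explicit edge resolving set of size $n-1$.

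For the lower bound, the key observation exploits $\deg(v)=n-1$, which forces $d(u,v)=1$ for every $u\neq v$. Consider any two distinct edges $e_1=vw_1$ and $e_2=vw_2$ incident to $v$. For any vertex $u\notin\{v,w_1,w_2\}$ we have
\[
d(u,e_i)=\min\{d(u,v),d(u,w_i)\}=\min\{1,d(u,w_i)\}=1 \quad (i=1,2),
\]
and $d(v,e_1)=d(v,e_2)=0$, so none of these vertices resolves the pair $\{e_1,e_2\}$. Hence any edge resolving set $A$ must contain $w_1$ or $w_2$. Writing $v$'s neighborhood as $V\setminus\{v\}=\{w_1,\dots,w_{n-1}\}$, this condition for every pair $(w_i,w_j)$ forces $A$ to miss at most one vertex of $V\setminus\{v\}$, yielding $|A|\geq n-2$.

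For the upper bound, I would show that $B=V\setminus\{v\}$ is an edge resolving set by a short case analysis on a pair of distinct edges $e,f$. If both are incident to $v$, the lower-bound analysis already tells us that their non-$v$ endpoints lie in $B$ and resolve them. If exactly one is incident to $v$, say $e=vw$ and $f=xy$ with $v\notin\{x,y\}$, then an endpoint of $f$ not equal to $w$ belongs to $B$ and has distance $0$ to $f$ but distance $1$ to $e$. If neither edge contains $v$, any endpoint lying in the symmetric difference of the two edge sets lies in $B$ and distinguishes them (having distance $0$ to one edge and $\geq 1$ to the other). Combined, these two bounds force $\B_e(\G)\in\{n-2,n-1\}$.

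The only delicate point is the one-incident-to-$v$ case of the upper bound, where one must check that such a separating endpoint exists; since $e$ and $f$ share at most one vertex, $f$ has at least one endpoint outside $\{v,w\}$, and that endpoint sits in $B$ and does the job. Everything else is routine verification using the fact that $v$ is universally adjacent.
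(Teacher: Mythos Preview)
The paper does not give its own proof of this proposition; it is quoted verbatim from Kelenc et al.\ \cite{kele} as a preliminary result, so there is nothing in the present paper to compare your argument against.

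That said, your proof is correct. The lower-bound step is the essential one: since every vertex other than $v,w_1,w_2$ is adjacent to $v$, it sees both edges $vw_1$ and $vw_2$ at distance $1$, so only $w_1$ or $w_2$ can separate them; hitting every such pair forces an edge resolving set to contain at least $n-2$ of the $w_i$'s. Your upper-bound case analysis for $B=V\setminus\{v\}$ is also sound, including the ``one edge through $v$'' case you flagged as delicate: because $e=vw$ and $f=xy$ are distinct edges sharing at most one endpoint, $f$ has an endpoint in $V\setminus\{v,w\}\subseteq B$ at distance $0$ from $f$ and distance exactly $1$ from $e$. The only omission is the degenerate case $n=2$, where there is a single edge and the pair argument is vacuous, but there $\B_e(K_2)=1=n-1$ trivially.
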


Filipovic et al. \cite{fili} showed the following result.
\begin{corollary}\emph{\cite{fili}}\label{tab:fili}
Let $\G$ be a connected $r$-regular graph. Then we have $\B_e(\G)\geq 1 + \lceil\log_2 r\rceil.$
\end{corollary}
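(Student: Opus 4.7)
\medskip
\noindent\textbf{Proof proposal.}
The plan is to adapt the standard local counting template for dimension-type lower bounds to the edge setting. Concretely, I would examine all edges incident to a carefully chosen off-basis vertex, confine their edge-codes to a small Boolean subcube, and count.

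Let $S=\{x_1,\ldots,x_q\}$ be an edge metric basis of $\G$, so $q=\B_e(\G)$. The case $q=n$ is trivial, so pick $v\in V(\G)\setminus S$. By $r$-regularity, $v$ has exactly $r$ neighbors $w_1,\ldots,w_r$ and $r$ incident edges $e_i=vw_i$ whose codes relative to $S$ must be pairwise distinct. For each $x_j\in S$ the triangle inequality yields $|d(v,x_j)-d(w_i,x_j)|\le 1$, so
$$d(e_i,x_j)=\min\{d(v,x_j),d(w_i,x_j)\}\in\{d(v,x_j)-1,\,d(v,x_j)\}.$$
Encoding the $j$-th coordinate by $\sigma_{i,j}:=d(v,x_j)-d(e_i,x_j)\in\{0,1\}$ places each edge-code in the Boolean cube $\{0,1\}^q$, and the $r$ vectors $\sigma_1,\ldots,\sigma_r$ are pairwise distinct. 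This already forces $r\le 2^q$, i.e., $q\ge\lceil\log_2 r\rceil$, falling one short of the target.

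To improve the count by one, I would pick $v$ extremally: fix a landmark $x_{j_0}\in S$ and take $v\in V(\G)\setminus S$ maximizing $d(u,x_{j_0})$ over $u\in V(\G)\setminus S$. Provided every neighbor of $v$ is strictly closer to $x_{j_0}$ than $v$ itself, one has $d(w_i,x_{j_0})=d(v,x_{j_0})-1$ for every $i$, hence $\sigma_{i,j_0}=1$ uniformly. The $r$ distinct vectors $\sigma_i$ are then confined to the face $\{\sigma\in\{0,1\}^q:\sigma_{j_0}=1\}$ of cardinality $2^{q-1}$, so $r\le 2^{q-1}$ and thus $q\ge 1+\lceil\log_2 r\rceil$.

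The main obstacle is this last step: securing a coordinate that is constant across all $\sigma_i$. The diametral choice of $v$ works as soon as $v$ has no neighbor tied with it in distance to $x_{j_0}$; the delicate case is an $r$-regular graph in which every $d(\cdot,x_{j_0})$-maximizer outside $S$ has a neighbor at the same maximum distance. Exploiting the freedom to cycle $x_{j_0}$ through all $q$ landmarks, together with the rigidity of $r$-regularity (every neighbor of $v$ has degree exactly $r$, so local counts propagate), is where the structural content of the inequality really lives and is the part of the proof that requires the most care.
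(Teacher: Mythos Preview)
The paper does not supply its own proof of this corollary; it is quoted verbatim from Filipovi\'{c} et al.\ \cite{fili} as a known result. So there is no in-paper argument to compare against, and your proposal has to be assessed on its own merits.

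Your counting template is exactly right, and the reduction of the $r$ edge-codes at $v$ to binary vectors $\sigma_i\in\{0,1\}^q$ is clean and correct. The gap is precisely the one you flag yourself and do not close: a $d(\cdot,x_{j_0})$-maximizer $v\notin S$ need not have every neighbor strictly closer to $x_{j_0}$. In an $r$-regular graph one can easily have whole plateaux of vertices at the maximum distance, and cycling $x_{j_0}$ through the $q$ landmarks does not obviously help. Your final paragraph gestures at ``rigidity of $r$-regularity'' but supplies no mechanism; as written the argument stalls at $q\ge\lceil\log_2 r\rceil$.

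The fix is far simpler than the extremal choice you are reaching for: drop the requirement $v\notin S$ and instead take $v$ to \emph{be} a landmark, say $v=x_1$. Then every edge $e_i=vw_i$ incident to $v$ satisfies $d(e_i,x_1)=\min\{d(v,x_1),d(w_i,x_1)\}=\min\{0,1\}=0$, so $\sigma_{i,1}=d(v,x_1)-d(e_i,x_1)=0$ for every $i$. The $r$ pairwise distinct vectors $\sigma_1,\ldots,\sigma_r$ are now confined to the face $\{\sigma\in\{0,1\}^q:\sigma_1=0\}$ of cardinality $2^{q-1}$, whence $r\le 2^{q-1}$ and $q\ge 1+\lceil\log_2 r\rceil$, with no residual cases. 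Your self-imposed restriction $v\notin S$ (introduced only to handle the vacuous case $q=n$) is what created the difficulty; choosing $v\in S$ hands you the frozen coordinate for free.
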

Peterin and Yero, \cite{Peterin} computed the edge metric dimension of the Corona product and Join of two graphs.
\begin{theorem}
\emph{\cite{Peterin}} Let $\Gamma$ and $\Omega$ be two graphs where $\Gamma$ is connected and $| V(\Omega) | \geq 2$. Then we have
\begin{center}
$\beta_e (\Gamma \odot \Omega) = |V (\Gamma)| \cdot (|V (\Omega)| - 1).$ \label{thm:corona}
\end{center}
\end{theorem}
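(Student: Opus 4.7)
The plan is to establish the equality through matching bounds of $n(m-1)$, where $n=|V(\Gamma)|$ and $m=|V(\Omega)|$. Throughout, label $V(\Gamma)=\{v_1,\ldots,v_n\}$ and write $\Omega^i$ for the copy of $\Omega$ attached to $v_i$ in the corona, with vertex set $V(\Omega^i)=\{u_1^i,\ldots,u_m^i\}$.

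For the lower bound, I would use the fact that for any two distinct vertices $u_j^i,u_k^i\in V(\Omega^i)$, the edges $e_1=v_iu_j^i$ and $e_2=v_iu_k^i$ can be separated only by $u_j^i$ or $u_k^i$ themselves. Indeed, any shortest path from $u_j^i$ or $u_k^i$ to a vertex $w\notin V(\Omega^i)$ must cross the cut-vertex $v_i$, so $d(u_j^i,w)=d(u_k^i,w)=d(v_i,w)+1$ and hence $d(e_1,w)=d(e_2,w)$; the vertex $v_i$ itself gives common distance $0$, and any other $u_l^i$ with $l\neq j,k$ gives common distance $1$. It follows that any edge resolving set $A$ must meet $\{u_j^i,u_k^i\}$ for every such pair, hence $|A\cap V(\Omega^i)|\geq m-1$ for each $i$, and $|A|\geq n(m-1)$.

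For the upper bound I take $A=\bigcup_{i=1}^n\bigl(V(\Omega^i)\setminus\{u_m^i\}\bigr)$, of cardinality exactly $n(m-1)$, and verify it is an edge resolving set by a case analysis on the three edge types: (T1) edges $v_iv_j$ of $\Gamma$, (T2) edges $v_iu_j^i$, and (T3) edges $u_j^iu_k^i$ inside $\Omega^i$. A direct distance computation yields, for $k\neq i$, that a T2 edge $v_iu_j^i$ is at distance $d_\Gamma(v_i,v_k)+1$ from every $u_p^k\in A$, a T3 edge inside $\Omega^i$ is at distance $d_\Gamma(v_i,v_k)+2$, and a T1 edge $v_iv_j$ is at distance $d_\Gamma(v_iv_j,v_k)+1$. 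These identities separate edges living in different copies almost for free, and within a single copy the positions where a code entry drops to $0$ pin down the endpoints.

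The main obstacle lies in a few borderline subcases where the omitted vertex $u_m^i$ is an endpoint of one of the edges, so that the same-copy codes collide; here I exploit a second copy (using the implicit hypothesis $n\geq 2$) as an auxiliary resolver. For instance, the pair $\{v_iv_j,\,v_iu_m^i\}$ shares its code on $V(\Omega^i)\setminus\{u_m^i\}$, but any $u_p^j\in A$ distinguishes them because $d_\Gamma(v_iv_j,v_j)=0$ while $d_\Gamma(v_i,v_j)=1$; the pair $\{v_iu_p^i,\,u_p^iu_m^i\}$ is separated by any resolver in a distinct copy thanks to the $+1$ gap between the T2 and T3 formulas. After clearing the remaining straightforward subcases, the upper bound is established and the theorem follows.
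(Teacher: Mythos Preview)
The paper does not prove this statement at all: it is quoted from Peterin and Yero \cite{Peterin} in the preliminaries section without proof, and is only used later as a black box. So there is no ``paper's own proof'' to compare your attempt against.

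Your argument is nonetheless the standard one, and it is essentially correct. The lower bound via the spoke-edges $v_iu_j^i$, $v_iu_k^i$ is airtight: these edges really are distinguished only by $u_j^i$ or $u_k^i$, forcing at least $m-1$ resolvers per copy. The upper bound construction and the distance formulas you record for the three edge types are correct, and the case analysis you sketch goes through once one checks the T3--T3 subcase in the same copy (which you omit but which follows because the symmetric difference of two distinct $2$-subsets of $\{1,\dots,m\}$ always contains an index other than $m$).

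You are also right to flag the ``implicit hypothesis $n\ge 2$'': the statement as printed in this paper is actually \emph{false} for $n=1$. For instance, $K_1\odot K_2\cong K_3$ has $\beta_e=2$, not $1\cdot(2-1)=1$; more generally, with a single copy there is no auxiliary resolver to separate a T2 edge $v_1u_j$ from a T3 edge $u_ju_k$ sharing the endpoint $u_j$. The original Peterin--Yero theorem includes the hypothesis $|V(\Gamma)|\ge 2$, which this paper has dropped in transcription. Your proof is correct under that hypothesis, and your identification of the gap is apt.
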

\begin{theorem}
\emph{\cite{Peterin}} Let $\Gamma$ and $\Omega$ be two connected graphs. Then we have
$\beta_e (\Gamma \vee \Omega) = |V (\Gamma)| + |V (\Omega)| - 1$. \label{thm:join}
\end{theorem}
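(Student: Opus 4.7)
My plan is to prove the identity $\beta_e(\Gamma \vee \Omega) = n + m - 1$, where $n = |V(\Gamma)|$ and $m = |V(\Omega)|$, by establishing matching upper and lower bounds. A crucial enabling observation is that $\mathrm{diam}(\Gamma \vee \Omega) \leq 2$: any two vertices in the join are either adjacent or share a common neighbor in the opposite part. Consequently $d(w, e) \in \{0, 1, 2\}$ for every vertex $w$ and edge $e$, so each code is a $\{0,1,2\}$-vector, and the analysis reduces to tracking the $0$-pattern (endpoints of $e$ lying in the candidate set $A$) together with the occasional $2$-entries forced by non-adjacency inside $\Gamma$ or inside $\Omega$.

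For the upper bound I would exhibit the explicit edge resolving set $A = V(\Gamma \vee \Omega) \setminus \{v^*\}$ for any fixed $v^* \in V(\Omega)$, which has size $n + m - 1$. The edges of $\Gamma \vee \Omega$ fall into three classes: bridges $uv$ with $u \in V(\Gamma), v \in V(\Omega)$, edges of $\Gamma$, and edges of $\Omega$. For each class the code $r(e|A)$ is essentially determined by its $0$-pattern: a bridge edge contributes one $0$ in $V(\Gamma)$ and at most one $0$ in $V(\Omega)$, an intra-$\Gamma$ edge contributes both of its $0$'s in $V(\Gamma)$, and an intra-$\Omega$ edge contributes its $0$'s in $V(\Omega)$ (losing one if an endpoint equals $v^*$, compensated by a distinctive $2$-entry). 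A routine case check rules out cross-class collisions.

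For the lower bound, let $A$ be any edge resolving set. I would first show $|V(\Gamma) \setminus A| \leq 1$: if $u_1, u_2 \in V(\Gamma) \setminus A$ are distinct and $v \in V(\Omega)$ is arbitrary, then for the bridge edges $e_i = u_i v$ any $w \in A \cap V(\Gamma)$ satisfies $d(w, e_i) = \min(d(w, u_i), 1) = 1$ independently of $i$, and any $w \in A \cap V(\Omega)$ satisfies $d(w, e_i) = \min(1, d(w, v))$, again independently of $i$. Thus $r(e_1|A) = r(e_2|A)$, a contradiction. The symmetric argument gives $|V(\Omega) \setminus A| \leq 1$, so $|A| \geq n + m - 2$. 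The final step is to rule out equality: assuming $|A| = n + m - 2$ with missing vertices $u^* \in V(\Gamma)$ and $v^* \in V(\Omega)$, I would exhibit a concrete unresolved pair of edges, the natural candidate being an intra-$\Gamma$ edge $u_1 u^* \in E(\Gamma)$ paired with the bridge edge $u_1 v^*$; their codes coincide whenever every remaining vertex of $A \cap V(\Gamma)$ is adjacent in $\Gamma$ to $u_1$ or $u^*$, and the analogous pairing works on the $\Omega$-side.

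The main obstacle is this tight final step. The generic indistinguishability argument only delivers $|A| \geq n + m - 2$, and pushing to $n + m - 1$ requires a structural witness valid for every choice of $u^*$ and $v^*$. Since connectivity of $\Gamma$ and $\Omega$ does not by itself guarantee an edge whose two endpoints dominate either factor, the proof will most likely proceed by a case split that extracts the required collision from whichever side of the join supplies the appropriate local structure.
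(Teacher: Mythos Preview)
The paper does not prove this theorem; it is merely quoted from \cite{Peterin} and later used as a black box, so there is no in-paper argument to compare your proposal against.

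On the proposal itself: your upper bound via $A=V(\Gamma\vee\Omega)\setminus\{v^*\}$ is correct, and your lower bound $|A|\ge n+m-2$ (from the indistinguishable bridges $u_1v,\,u_2v$ when $u_1,u_2\notin A$) is also correct. The obstacle you identify at the ``tight final step'', however, is not a mere technicality but a genuine obstruction: the statement as reproduced in this paper is false without additional hypotheses. Take $\Gamma=\Omega=P_5$ with paths $u_1u_2u_3u_4u_5$ and $v_1v_2v_3v_4v_5$, and let $A=V(\Gamma\vee\Omega)\setminus\{u_3,v_3\}$, which has size $8=n+m-2$. Every bridge edge $u_iv_j$ is determined by its $0$-pattern (the set $\{u_i,v_j\}\cap A$); each edge of $\Gamma$ has a $2$ at position $u_1$ or $u_5$ and each edge of $\Omega$ a $2$ at position $v_1$ or $v_5$, and these $2$-patterns together with the $0$-patterns separate all of them from one another and from every bridge. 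Hence $\beta_e(P_5\vee P_5)\le 8<9$. Equivalently, Zubrilina's criterion (Theorem~\ref{thm:theorem}) fails for the pair $u_3,v_3$: their common neighbours are $u_2,u_4,v_2,v_4$, none of which is adjacent to all four non-mutual neighbours $u_1,u_5,v_1,v_5$.

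So the gap you sensed is real: the bound $n+m-1$ cannot be reached for all connected $\Gamma,\Omega$, and no case split will close the argument in the stated generality.
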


Zubrilina \cite{zub} showed the following results regarding the edge metric dimension of graphs.
\begin{theorem}\emph{\cite{zub}}\label{thm:theorem}
Let $\G$ be a graph with order $n$. Then $\B_e(\G)=n-1$ if and only if for any distinct $v_1,v_2 \in V(\G)$, there exists $u \in V(\Gamma)$
such that $v_1u, v_2u \in E(\G)$ and $u$ is adjacent to all non-mutual neighbors of $v_1$ and $v_2$.
\end{theorem}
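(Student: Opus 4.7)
The plan is to use the standard bound $\B_e(\G)\le n-1$ (take any $V\setminus\{v\}$), so $\B_e(\G)=n-1$ iff no $(n-2)$-set is an edge resolving set, i.e., for every pair $\{v_1,v_2\}$ there exist distinct edges $e_1,e_2$ with $d(e_1,w)=d(e_2,w)$ for all $w\in V\setminus\{v_1,v_2\}$. Both directions of the theorem will be shown by reducing the problem to the structure of such an unresolved pair of edges.

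For the forward direction, fix $v_1,v_2$ and pick an unresolved pair $e_1,e_2$. I first identify the shape of these edges. Any endpoint $a$ of $e_1$ with $a\notin\{v_1,v_2\}$ satisfies $d(e_1,a)=0$, so it must satisfy $d(e_2,a)=0$ too, forcing $a$ to be an endpoint of $e_2$. Running this observation on both endpoints of both edges, and ruling out $e_1=e_2$, one concludes that $e_1$ and $e_2$ share exactly one vertex $u$, with the unshared endpoints being precisely $v_1$ and $v_2$; thus $u$ is a common neighbor of $v_1,v_2$. Then, for any non-mutual neighbor $x\in N(v_1)\setminus N(v_2)$ with $x\notin\{v_1,v_2\}$, we have $d(v_1,x)=1$ and $d(v_2,x)\ge 2$, so $d(e_1,x)\le 1$ while $d(e_2,x)=\min(d(v_2,x),d(u,x))$; equality forces $d(u,x)\le 1$, and since $x=u$ would make $x$ a mutual neighbor, we get $xu\in E$. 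The boundary case $x\in\{v_1,v_2\}$ (which happens only when $v_1v_2\in E$) is absorbed into the common-neighbor statement $uv_1,uv_2\in E$.

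For the converse, assume the hypothesis and fix $v_1,v_2$; take the witness $u$ and set $e_1=v_1u$, $e_2=v_2u$. I must verify that for every $w\in V\setminus\{v_1,v_2\}$,
\[
\min(d(v_1,w),d(u,w))=\min(d(v_2,w),d(u,w)).
\]
If $d(u,w)$ is the minimum on both sides we are done, so suppose (say) $d(u,w)>d(v_1,w)$. A shortest $v_1$-to-$w$ path enters $w$ through some neighbor $x$ of $v_1$, giving $d(v_1,w)=1+d(x,w)$. If $x$ is a mutual neighbor of $v_1,v_2$, then $d(v_2,w)\le d(v_1,w)$; if $x$ is non-mutual, the hypothesis gives $xu\in E$, so $d(u,w)\le 1+d(x,w)=d(v_1,w)$, contradicting $d(u,w)>d(v_1,w)$. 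Hence $d(v_2,w)\le d(v_1,w)$, and symmetrically $d(v_1,w)\le d(v_2,w)$, so both minima equal $d(v_1,w)=d(v_2,w)$.

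The main obstacle is the converse's case analysis: one must control shortest paths from $v_1$ (and $v_2$) to an arbitrary $w$ by arguing that the entry neighbor is either mutual or, via the hypothesis $xu\in E$, yields a strictly shorter path through $u$—thereby contradicting the assumption $d(u,w)>d(v_1,w)$. Getting the symmetrization clean and treating the corner case $x\in\{v_1,v_2\}$ (i.e., when $v_1v_2$ is itself an edge) are the delicate points; once these are handled, the two edges $v_1u,v_2u$ furnish the unresolved pair that witnesses the failure of every $(n-2)$-set to be an edge resolving set.
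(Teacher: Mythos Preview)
The paper does not supply its own proof of this theorem; it is quoted verbatim from Zubrilina~\cite{zub} and used as a black box. Your argument is correct and self-contained, and it is essentially the natural proof (and, as far as one can tell, the same line Zubrilina takes): reduce $\B_e(\G)=n-1$ to the statement that \emph{every} $(n-2)$-set $V\setminus\{v_1,v_2\}$ fails to edge-resolve, then analyze the shape of an unresolved pair.

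Two small points worth tightening. First, in the forward direction you compress the case analysis that forces $e_1=v_1u$, $e_2=v_2u$; in particular you should explicitly rule out $e_1=v_1v_2$ (if $e_1=v_1v_2$, any distinct $e_2$ must have an endpoint $d\notin\{v_1,v_2\}$, and then $d(e_2,d)=0\neq d(e_1,d)$). Second, in the converse your sentence ``symmetrically $d(v_1,w)\le d(v_2,w)$'' is justified, but only because the preceding line already gives $d(v_2,w)\le d(v_1,w)<d(u,w)$, so $d(u,w)>d(v_2,w)$ and the symmetric first-step argument applies; it is worth saying this one clause so the symmetry is not circular. Also, the phrase ``enters $w$ through some neighbor $x$ of $v_1$'' should read ``leaves $v_1$ through some neighbor $x$''; the formula $d(v_1,w)=1+d(x,w)$ is the one you intend.
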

The following corollary is followed from Theorem \ref{thm:theorem}.
\begin{corollary}\emph{\cite{zub}}\label{thm:collary}
Let $\G$ be a graph of order $n$. Suppose $\B_e(\G)=n-1$. Then, $\mathrm{diam}(\G)=2$ and every edge lies on a triangle.
\end{corollary}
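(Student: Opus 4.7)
The plan is to derive both conclusions as straightforward structural consequences of Theorem \ref{thm:theorem}, which under the hypothesis $\B_e(\G)=n-1$ supplies the following: for every pair of distinct $v_1,v_2\in V(\G)$ there exists a vertex $u$ with $v_1u,v_2u\in E(\G)$ (so $u$ is a common neighbor of $v_1$ and $v_2$) and, in addition, $u$ is adjacent to all non-mutual neighbors of $v_1$ and $v_2$. For this corollary only the existence of the common neighbor $u$ is actually needed; the adjacency-to-non-mutual-neighbors clause is stronger information that we may discard.

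First I would establish the diameter bound. Fix arbitrary distinct $v_1,v_2\in V(\G)$. If $v_1v_2\in E(\G)$ then $d(v_1,v_2)=1$; otherwise the structural condition above produces a vertex $u$ adjacent to both, which realizes the path $v_1uv_2$ of length two, giving $d(v_1,v_2)\le 2$. Taking the maximum over all pairs yields $\mathrm{diam}(\G)\le 2$. To upgrade to equality, note that if $\G$ had diameter $1$ then $\G=K_n$; this case either is regarded as trivial (or is excluded by the intended setting $n\ge 3$ with $\G\neq K_n$), while in every other case some non-adjacent pair exists and realizes a distance of exactly $2$.

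Next I would verify that every edge lies on a triangle. Let $e=v_1v_2\in E(\G)$ be arbitrary. Applying the structural condition of Theorem \ref{thm:theorem} to the pair $v_1,v_2$ yields a vertex $u$ with $v_1u,v_2u\in E(\G)$. Because simple graphs contain no loops, $u$ is distinct from both $v_1$ and $v_2$, so $\{v_1,v_2,u\}$ induces a triangle that contains the edge $e$.

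Since Theorem \ref{thm:theorem} does essentially all of the work, there is no substantive obstacle: the proof is a two-line unpacking of the characterization. The only mildly delicate point is the diameter-$1$ versus diameter-$2$ dichotomy, which is a minor case distinction rather than a real difficulty and can be handled either by assuming $\G$ is non-complete or by viewing the $K_n$ case as vacuous within the broader context where this corollary is invoked.
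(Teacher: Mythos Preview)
Your proposal is correct and matches the paper's approach exactly: the paper does not supply a detailed argument but simply states that the corollary ``is followed from Theorem~\ref{thm:theorem},'' and your two-step unpacking (common neighbor $\Rightarrow$ distance $\le 2$; common neighbor of endpoints $\Rightarrow$ triangle through the edge) is precisely the intended derivation. Your caveat about the $K_n$/diameter-$1$ edge case is apt and is the only subtlety worth flagging, since the corollary as literally stated requires $\G\neq K_n$ for $\mathrm{diam}(\G)=2$ rather than $\mathrm{diam}(\G)\le 2$.
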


Let $T_n =(V,E)$ be a tree and let $v\in V$. Define the equivalence relation $R_v$ in the following way: for every two edges $e,~f$ we let $eR_vf$ if and only if
there is a path in $T_n$ including $e$ and $f$ that does not have $v$ as an internal vertex. The subgraphs induced by the edges of the
equivalence classes of $E$ are called the bridges of $T_n$ relative to $v$. Furthermore, for each vertex $v\in V$, the legs at $v$ are the
bridges which are paths. We denote by $l_v$ the number of legs at $v$.

For a tree $T_n$, Kelenc et al. \cite{kele} showed the following.
\begin{remark}\emph{\cite{kele}}\label{keleremark}
Let $T_n=(V,E)$ be a tree. If $T_n$ is not a path, then
$\B(T_n) = \B_e(T_n) = \sum_{v \in V, l_v>1} (l_v-1).$
\end{remark}

Boutrig et al. \cite{bou} introduced the concept of vertex-edge domination in graphs.
A vertex $x\in V$ in $\G$ is said to $ve-$dominate an edge if it dominates the
edges incident to it as well as the edges adjacent to these incident edges.
In other words, a vertex $w\in V$ in a graph $ve-$dominates an edge $xy \in E$ in $\G$ if
\begin{itemize}
\item[(i)] $w = x$ or $w = y$ ($w$ is incident to $xy$), or
\item[(ii)] $wx$ or $wy$ is an edge in $\G$ ($w$ is incident to an edge adjacent to $xy$).
\end{itemize}
A set $T \subseteq V(\G)$ is a $ve-$dominating set if for all edges $e \in E(\G)$, there exists a vertex $x \in T$ such that
$x$ dominates $e$. The minimum cardinality of a $ve-$dominating set in $\G$ is called the $ve-$domination number and is
denoted by $\g_{ve}(\G)$. A $ve-$dominating set $T$ of cardinality $\g_{ve}(\G)$ is known as a $\g_{ve}$-set.
Peters \cite{Pet} in his PhD thesis showed the following results.
\begin{proposition}\emph{\cite{Pet}}\label{tab:DP}
The vertex-edge domination number of some graph families is computed as follows:
\begin{itemize}
\item[\emph{(i)}] For $m,n\geq2$, we have $\g_{ve}(Kn)=\g_{ve}(K_{m,n})=1$.
\item[\emph{(ii)}] For $n\geq2$, we have $\g_{ve}(P_n) = \lfloor \frac{n+2}{4} \rfloor$.
\item[\emph{(iii)}] For $n\geq3$, we have $\g_{ve}(C_n) =\lfloor \frac{n+3}{4} \rfloor$.
\end{itemize}
\end{proposition}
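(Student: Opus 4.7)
The plan is to treat (i) as a direct consequence of every single vertex having a ``large'' closed neighborhood, and then to handle (ii) and (iii) by pairing a single-vertex dominating-capacity count with an explicit block construction.

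For (i), I would observe that in $K_n$ any vertex $v$ has $N[v]=V(K_n)$, so for every edge $xy$ we have $v\in\{x,y\}$ or $vx,vy\in E(K_n)$; hence $\{v\}$ is a $ve$-dominating set and $\g_{ve}(K_n)=1$. For $K_{m,n}$, any vertex $v$ is adjacent to every vertex of the opposite part, and every edge of $K_{m,n}$ has one endpoint in that opposite part, so again $\{v\}$ $ve$-dominates all edges.

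For (ii), a single vertex $v_i$ in $P_n=v_1v_2\cdots v_n$ has closed neighborhood $\{v_{i-1},v_i,v_{i+1}\}$, so the edges it $ve$-dominates are exactly the (at most four) edges $v_{i-2}v_{i-1}$, $v_{i-1}v_i$, $v_iv_{i+1}$, $v_{i+1}v_{i+2}$. Since $|E(P_n)|=n-1$, any $ve$-dominating set has size at least $\lceil(n-1)/4\rceil$, which a short check shows equals $\lfloor(n+2)/4\rfloor$. For the matching upper bound, I would take $S=\{v_{4k-1}:1\le k\le\lfloor(n+2)/4\rfloor\}$, adjusting the final index down to $v_n$ when it would otherwise exceed $n$; each $v_{4k-1}$ $ve$-dominates the four consecutive edges $v_{4k-3}v_{4k-2}$, $v_{4k-2}v_{4k-1}$, $v_{4k-1}v_{4k}$, $v_{4k}v_{4k+1}$, so $S$ covers $E(P_n)$. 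For (iii) the same strategy applied to $C_n$ yields $\g_{ve}(C_n)\ge\lceil n/4\rceil=\lfloor(n+3)/4\rfloor$ from $|E(C_n)|=n$, and the matching upper bound comes from choosing every fourth vertex around the cycle.

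The main delicate point is the boundary handling in the path construction when $n-1$ is not a multiple of four: the naive pattern ``pick $v_{4k-1}$'' may prescribe an index slightly above $n$, and one must verify that shifting the last chosen vertex toward $v_n$ still $ve$-dominates all leftover edges. This reduces to a short case analysis on $n\bmod 4$. The cycle case is cleaner because it has no endpoints, and after fixing a starting vertex the construction simply wraps around; one only needs to verify that the final block of edges between the last chosen vertex and the starting one is covered, which again is a routine check modulo $4$.
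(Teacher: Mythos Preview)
The paper does not prove this proposition; it is quoted verbatim from Peters' thesis \cite{Pet} and stated without argument, so there is no in-paper proof to compare your attempt against.

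That said, your proposal is a correct and standard proof. Part (i) is immediate from the definition, as you say. For (ii) and (iii) your counting lower bound (each vertex $ve$-dominates at most four edges, hence $\g_{ve}\ge\lceil |E|/4\rceil$) together with the identity $\lceil (n-1)/4\rceil=\lfloor (n+2)/4\rfloor$ (respectively $\lceil n/4\rceil=\lfloor (n+3)/4\rfloor$) gives exactly the claimed value, and your block construction $\{v_3,v_7,v_{11},\ldots\}$ with the last index capped at $n$ achieves it. The boundary check you flag is genuinely routine: when $4k-1>n$, replacing the last vertex by $v_n$ still $ve$-dominates the at most three trailing edges $v_{n-3}v_{n-2},v_{n-2}v_{n-1},v_{n-1}v_n$, which is all that remains after $v_{4(k-1)-1}$ has covered through $v_{4k-4}v_{4k-3}$. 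The cycle case is, as you note, even simpler since there are no endpoints to worry about.
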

\begin{proposition}\emph{\cite{Pet}}\label{thm:propo1}
Let $\G$ be a graph of order $n$. Then, $\g_{ve}(\G)=1$ if and only if there exists a vertex
$x\in V(\G)$ such that every vertex of $\G$ is within distance two of $x$ and if
$Y=\{y \in V(\G):d(x,y)= 2\}$ then $Y$ is an independent set of vertices.
\end{proposition}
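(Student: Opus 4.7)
The plan is to prove both directions of the equivalence by directly unpacking the definition of the $ve$-domination, using only the three distance classes around a candidate vertex $x$, namely $\{x\}$, $N(x)$ and the distance-$2$ set $Y$.

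For the forward direction, suppose $\g_{ve}(\G)=1$ and let $\{x\}$ be a $ve$-dominating set. First I would argue that every vertex of $\G$ lies within distance two of $x$. Since $\G$ is connected with $n\geq 2$, every vertex $y$ is incident to some edge $yz\in E(\G)$. The definition of $ve$-domination forces $x\in\{y,z\}$ or $x$ to be adjacent to $y$ or $z$; in either case $d(x,y)\leq 2$. Next, to show that the set $Y=\{y\in V(\G):d(x,y)=2\}$ is independent, I would argue by contradiction: if $y_1y_2\in E(\G)$ with $y_1,y_2\in Y$, then $x\neq y_i$ and $x$ is non-adjacent to either endpoint, contradicting that $\{x\}$ $ve$-dominates the edge $y_1y_2$.

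For the converse, assume such an $x$ exists. Partition $V(\G)=\{x\}\cup N(x)\cup Y$. For any edge $uv\in E(\G)$, at most one of $u,v$ can lie in $Y$ (by independence of $Y$), so at least one endpoint, say $u$, satisfies $d(x,u)\leq 1$. Then either $u=x$ (so $x$ is incident to $uv$) or $xu\in E(\G)$ (so $x$ is incident to an edge adjacent to $uv$). In both cases $x$ $ve$-dominates $uv$, so $\{x\}$ is a $ve$-dominating set and $\g_{ve}(\G)\leq 1$. Since $\G$ has at least one edge, $\g_{ve}(\G)\geq 1$, hence equality.

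There is no real obstacle here; the argument is essentially a reorganisation of the definition. The only delicate points are (i) to be careful that a vertex $y$ with $d(x,y)=2$ really is incident to some edge witnessing the failure of $ve$-domination on the contradiction path, which is automatic because $\G$ is connected, and (ii) to handle the trivial case of a single-vertex graph separately or simply exclude it, consistently with the framework $n\geq 2$ used throughout the paper.
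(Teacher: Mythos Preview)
Your argument is correct. The paper does not actually supply its own proof of this proposition; it is quoted from Peters' thesis \cite{Pet} and stated without proof, so there is nothing to compare against beyond noting that your direct unpacking of the $ve$-domination definition is exactly the natural route and goes through cleanly in both directions.
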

\begin{proposition}\emph{\cite{Pet}}\label{thm:propo2}
For any graph $\G$ of order $n$, size $m$ and maximum degree $\Delta(\G)$,
$$\Big\lceil \frac{m}{(\Delta(\G))^2}  \Big\rceil \leq \g_{ve}(\G).$$
\end{proposition}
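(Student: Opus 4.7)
The plan is a straightforward double-counting / pigeonhole argument. If I can show that each individual vertex $w \in V(\G)$ ve-dominates at most $\Delta(\G)^2$ edges of $\G$, then any ve-dominating set $T$ must satisfy $|T| \cdot \Delta(\G)^2 \geq m$, which gives $|T| \geq \lceil m / \Delta(\G)^2 \rceil$ and hence the claimed inequality for $\g_{ve}(\G)$. The whole proof therefore reduces to this single-vertex count.

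To establish that count, I would first unpack the definition. Fix $w$ and let $E(w)$ denote the set of edges ve-dominated by $w$. An edge $e=xy$ lies in $E(w)$ exactly when $w\in\{x,y\}$ or $wx\in E(\G)$ or $wy\in E(\G)$. In the first case the ``other'' endpoint of $e$ is a neighbor of $w$, and in the second case $x$ or $y$ itself is a neighbor of $w$. Either way, $e$ has at least one endpoint in $N(w)$. Bounding the number of such edges by counting edge-endpoint incidences at the vertices of $N(w)$ then gives
$$|E(w)| \le \sum_{x \in N(w)} \deg(x) \le |N(w)| \cdot \Delta(\G) \le \Delta(\G)^2.$$

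Finally, since every edge of $\G$ is ve-dominated by at least one vertex of any ve-dominating set $T$, one obtains $m \le \sum_{w\in T}|E(w)| \le |T|\,\Delta(\G)^2$, whence $|T|\ge m/\Delta(\G)^2$, and taking $T$ to be a minimum ve-dominating set together with the fact that $|T|$ is an integer yields $\g_{ve}(\G)\ge\lceil m/\Delta(\G)^2\rceil$. I do not anticipate serious obstacles; the heart of the argument is the observation that every ve-dominated edge has at least one endpoint in the \emph{open} neighborhood of the dominator, after which the rest is a pigeonhole computation. The only step to verify carefully is the case when $w$ is itself an endpoint of $e$, ensuring that the other endpoint genuinely lies in $N(w)$ rather than only in $N[w]$, so that the uniform bound $\sum_{x\in N(w)}\deg(x)\le \Delta(\G)^2$ indeed covers every edge of $E(w)$.
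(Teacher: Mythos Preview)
Your argument is correct. The key observation---that every edge ve-dominated by $w$ has at least one endpoint in the \emph{open} neighborhood $N(w)$---is verified in all cases (including the case $w\in\{x,y\}$, where the other endpoint of $xy$ lies in $N(w)$), and the incidence bound $|E(w)|\le\sum_{x\in N(w)}\deg(x)\le\Delta(\G)^2$ follows. The pigeonhole conclusion is then immediate.

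Note, however, that the paper does not supply its own proof of this proposition: it is quoted verbatim from Peters' thesis \cite{Pet} as a preliminary result, so there is no in-paper argument to compare against. Your double-counting proof is the natural one and matches the standard reasoning behind such degree-based lower bounds on domination-type parameters.
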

\begin{proposition}\emph{\cite{Pet}}\label{thm:propo3}
For any graph $\G$ of size $m$, maximum degree $\Delta(\G)$, and minimum degree $\delta(\G)$,
$$\g_{ve}(\G) \leq m- \Delta(\G)-\frac{\Delta(\G)(\delta(\G)-1)}{2}+1.$$
\end{proposition}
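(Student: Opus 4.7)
The plan is to exhibit an explicit ve-dominating set whose cardinality realises the claimed bound. First I would pick a vertex $v \in V(\G)$ with $\deg(v) = \Delta(\G)$ and place it in a set $T$. By the definition of ve-domination, $v$ ve-dominates an edge $xy$ exactly when at least one of $x, y$ lies in $N[v]$; consequently the edges not yet dominated are precisely those having both endpoints in $V \setminus N[v]$. For each such remaining edge I would append one of its endpoints to $T$. Since any vertex incident to an edge automatically ve-dominates it, the resulting $T$ is a valid ve-dominating set and $|T| \leq 1 + c$, where $c$ denotes the number of edges both of whose endpoints lie outside $N[v]$.

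The core of the argument is then a clean upper bound on $c$. Let $a$ be the number of edges with both endpoints inside $N(v)$ and $b$ the number of edges with exactly one endpoint in $N(v)$ and the other in $V \setminus N[v]$. Partitioning $E(\G)$ according to how its edges meet $N[v]$ gives $m = \Delta(\G) + a + b + c$. Computing $\sum_{u \in N(v)} \deg(u)$ in two ways produces the identity
\[
\sum_{u \in N(v)} \deg(u) \;=\; \Delta(\G) + 2a + b,
\]
since each edge $vu$ is counted once by its endpoint $u$, each edge internal to $N(v)$ is counted at both of its endpoints, and each edge leaving $N(v)$ for $V \setminus N[v]$ is counted once.

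Applying the minimum degree bound $\deg(u) \geq \delta(\G)$ to every $u \in N(v)$ forces $2a + b \geq \Delta(\G)\bigl(\delta(\G)-1\bigr)$, and since $b \geq 0$ this yields $a + b \geq \tfrac{\Delta(\G)(\delta(\G)-1)}{2}$. Substituting into the partition equation bounds $c$, and combining with $|T| \leq 1 + c$ delivers
\[
\g_{ve}(\G) \;\leq\; 1 + c \;\leq\; m - \Delta(\G) - \frac{\Delta(\G)\bigl(\delta(\G)-1\bigr)}{2} + 1,
\]
which is exactly the stated inequality. I do not anticipate a real obstacle here: the whole proof is a greedy construction followed by a degree-sum handshake, and the only spot requiring a moment of care is the factor of two on $a$ in the identity above, arising from the double-counting of edges internal to $N(v)$.
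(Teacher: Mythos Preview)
The paper does not actually prove this proposition: it is quoted verbatim from Peters' thesis \cite{Pet} and stated without proof in the preliminaries. There is therefore nothing in the paper to compare your argument against.

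That said, your proposed proof is correct and is the natural argument. The only step worth double-checking is the passage from $2a + b \geq \Delta(\G)(\delta(\G)-1)$ to $a + b \geq \tfrac{\Delta(\G)(\delta(\G)-1)}{2}$: this follows because $b \geq 0$ gives $2(a+b) = 2a + 2b \geq 2a + b$, which is exactly what you need. Everything else---the greedy construction of $T$, the edge partition $m = \Delta(\G) + a + b + c$, and the degree-sum identity $\sum_{u \in N(v)} \deg(u) = \Delta(\G) + 2a + b$---is sound.
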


Lewis \cite{Lew} in his PhD thesis showed the following relation.
\begin{proposition}\emph{\cite{Lew}}\label{prop0}
For any graph $\G$, we have $\g_{ve}(\G) \leq \g(\G)$.
\end{proposition}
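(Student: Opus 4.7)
The plan is to show that every ordinary dominating set is automatically a $ve$-dominating set, which immediately yields the desired inequality by taking a minimum dominating set.

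More precisely, I would start by fixing a minimum dominating set $D \subseteq V(\G)$ with $|D| = \g(\G)$. The goal is then to verify that $D$ satisfies the definition of a $ve$-dominating set, i.e., that every edge $e \in E(\G)$ is $ve$-dominated by some vertex of $D$. To this end, I would pick an arbitrary edge $e = xy \in E(\G)$ and split into two cases based on whether an endpoint of $e$ lies in $D$.

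In the first case, suppose $x \in D$ (the case $y \in D$ being symmetric). Then $x$ itself is incident to $e$, so condition (i) in the definition of $ve$-domination applies and $x \in D$ $ve$-dominates $e$. In the second case, suppose neither $x$ nor $y$ belongs to $D$. Since $D$ is a dominating set, there exists some $w \in D$ with $w \in N(x)$, i.e., $wx \in E(\G)$. But then $wx$ is an edge adjacent to $e = xy$, so condition (ii) in the definition applies and $w \in D$ $ve$-dominates $e$. In either case, $e$ is $ve$-dominated by a vertex of $D$, so $D$ is a $ve$-dominating set. Consequently $\g_{ve}(\G) \leq |D| = \g(\G)$.

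There is essentially no obstacle here: the argument is a direct unpacking of the two definitions. The only subtlety to keep in mind is that condition (ii) of $ve$-domination requires the dominating vertex to be incident to an edge adjacent to $e$, which is exactly what being adjacent to an endpoint of $e$ provides — and this is precisely what the standard dominating set condition supplies for any non-$D$ vertex.
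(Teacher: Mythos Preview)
Your argument is correct and is exactly the natural proof: any dominating set $ve$-dominates every edge because each edge has an endpoint either in the set or adjacent to the set. Note, however, that the paper does not actually prove this proposition; it is quoted as a known result from Lewis's thesis \cite{Lew}, so there is no ``paper's own proof'' to compare against --- your write-up simply supplies the standard justification.
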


\section{The vertex-edge dominant edge metric dimension of graphs}\label{sec3}
By combining the ideas of a vertex-edge dominating set and an edge resolving set,
we introduce a vertex-edge dominant edge resolving set. A subset $W \subset V(\G)$
is said to be a $ve$-dominant edge resolving set, if $W$ is a $ve-$dominating set
and an edge resolving set simultaneously. Minimal cardinality of such a set in $\G$
is called the vertex-edge dominant edge metric dimension of $\G$. We denote this
parameter by $\g_{emd}(\G)$.

In order to explain the idea, we present an example. We consider $\G$ as the graph with vertex set
$V(\G)= \{v_1, v_2, \dots, v_8\}$ and edge set $E(\G)= \{a, b, c, d, e, f, g, h\}$ as shown in Figure \ref{fig000}.
Note that the set $W = \{v_1, v_4, v_5 \}$ is a vertex-edge dominant edge resolving set of $\G$.

\begin{figure}[htbp!]
\centering
  \includegraphics[width=5cm]{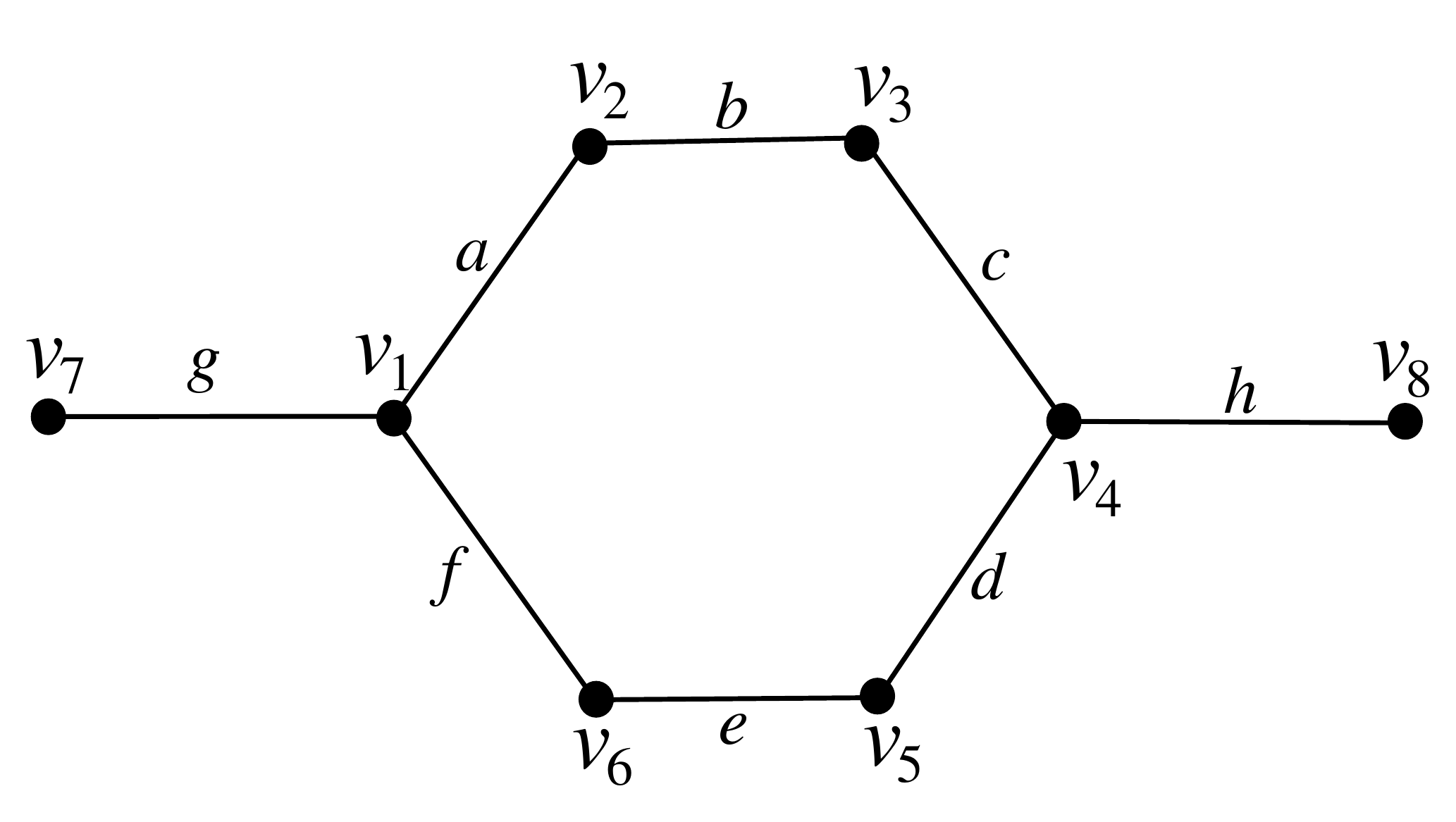}
  \caption{A graph with the $\g_{emd}(G)= 3$.}\label{fig000}
\end{figure}

The vertex $v_1$ will dominate the edges $a, b, g, f$ and $e$ and the vertex $v_4$ will dominate the
edges $b, c, d, e$ and $h$. However, the vertices $v_1$ and $v_4$ do not resolve all the edges $\G$ since, for example, $r(a | \{v_1, v_4 \}) = r(f | \{v_1, v_4 \})$, $r(b | \{v_1, v_4 \}) = r(e | \{v_1, v_4 \})$ and $r(c | \{v_1, v_4 \}) = r(d | \{v_1, v_4 \})$.
Table \ref{tab:MRS} exhibits the representations of all the edges of $\G$ corresponding to $W$.
Routine calculations show that no set of cardinality smaller than three is a vertex
edge dominant edge resolving set of $\G$. This leads us to the fact that $\g_{emd}(G)= 3$.

\begin{table}[htbp!]
		\centering
		\begin{tabular}{|c|c|c|c|c|c|c|c|c|c|c|c|c|}
		\hline
	    Edges & $a$ & $b$& $c$ &$d$ & $e$ & $f$ & $g$ & $h$\\ \hline
		${\small \g_{emd}r(..|T)}$ & $\{0,2,2\}$& $\{1,1,2\}$& $\{2,0,1\}$& $\{2,0,0\}$& $\{1,1,0\}$& $\{0,2,1\}$& $\{0,3,1\}$& $\{3,0,1\}$\\ \hline
		
        \end{tabular}
        \vspace*{0.3cm}
        \caption{The representations of the edges of $\G$ with respect to the set $W=\{v_1, v_4, v_5\}$.}\label{tab:MRS}
	\end{table}

Next, we compute the vertex-edge dominant edge metric dimension of classical families of graphs.
\subsection{Path and cycle graphs}

The following theorems compute the vertex-edge dominant edge metric dimension of path graph $P_n$ and cycle graph $C_n$.
\begin{theorem}
Let $P_n$ be a path graph, where $n\geq2$. Then
$$\g_{emd}(P_n) =
\begin{cases}
\B_{e}(P_n) = 1, & n = 2, 3;\\
\B_{e}(P_n) + 1 = 2, & n = 4, 5;\\
\g_{ve}(P_n) = \lfloor \frac{n+2}{4} \rfloor, & n \geq 6.
\end{cases}$$
\end{theorem}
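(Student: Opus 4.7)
The plan is to treat the three regimes $n\in\{2,3\}$, $n\in\{4,5\}$, and $n\geq 6$ separately. For $n=2,3$ the singleton $\{v_1\}$ is both ve-dominating (every edge is either incident to $v_1$ or adjacent to an incident edge) and edge-resolving (the distances from $v_1$ to the $n-1$ edges are $0,1,\ldots,n-2$, all distinct), so $\g_{emd}(P_n)=1=\B_e(P_n)$. For $n=4,5$ I would first rule out a set of size one: the only vertices that ve-dominate every edge of $P_n$ are the ``central'' ones (namely $v_2$ and $v_3$ when $n=4$, and $v_3$ when $n=5$), but each of these has two incident edges both at distance zero and therefore fails to edge-resolve. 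A ve-dominant edge resolving set of size two is then exhibited by $\{v_1,v_n\}$, whose $n-1$ edge-codes $(0,n-2),(1,n-3),\ldots,(n-2,0)$ are pairwise distinct.

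For $n\geq 6$, the lower bound is immediate from Proposition \ref{tab:DP}: any ve-dominant edge resolving set is in particular a ve-dominating set, so $\g_{emd}(P_n)\geq \g_{ve}(P_n)=\lfloor(n+2)/4\rfloor$. For the matching upper bound, write $n=4k+r$ with $r\in\{0,1,2,3\}$ and consider
$$W=\{v_3,v_7,v_{11},\ldots,v_{4k-1}\}\cup\{v_n:r\in\{2,3\}\}.$$
A residue-by-residue count gives $|W|=\lfloor(n+2)/4\rfloor$. Each $v_{4j-1}$ ve-dominates exactly the block of four consecutive edges $e_{4j-3},e_{4j-2},e_{4j-1},e_{4j}$, and these blocks tile $\{e_1,\ldots,e_{n-1}\}$ (with the appended $v_n$ covering the one or two trailing edges when $r\in\{2,3\}$), so $W$ is ve-dominating.

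The technical heart is verifying that $W$ is also edge-resolving. Here I would exploit the V-shape of the map $i\mapsto d(v_p,e_i)$ on a path: it attains its minimum value $0$ exactly at $i\in\{p-1,p\}$ and grows linearly as $i$ moves away from this minimum. Hence $d(v_p,e_i)=d(v_p,e_j)$ with $i<j$ can happen in only two ways, namely $\{i,j\}=\{p-1,p\}$ (two edges incident to $v_p$) or a ``reflection pair'' satisfying $i+j=2p-1$. Since the vertices of $W$ form an arithmetic progression with common difference $4$ (plus at most one boundary vertex), no two distinct vertices of $W$ can simultaneously witness either identity for the same pair $(e_i,e_j)$; a short case split on whether $e_i$ and $e_j$ lie inside one block or straddle two consecutive blocks then shows that any tie under one vertex of $W$ is always broken by the next. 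The main obstacle is precisely this final edge-resolution check, especially the boundary case when the appended vertex $v_n$ is present, where one must confirm that $v_n$ still separates every symmetry-tied pair within the last two blocks.
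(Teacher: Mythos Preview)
Your approach is correct and follows the same strategy as the paper: the lower bound $\gamma_{emd}(P_n)\ge\gamma_{ve}(P_n)$ is immediate, and for $n\ge 6$ the upper bound comes from exhibiting an explicit ve-dominating set of size $\lfloor(n+2)/4\rfloor$ that is also edge-resolving (the paper uses slightly different arithmetic-progression sets $W$, split according to $n\bmod 4$, and simply asserts the edge-resolution without argument).

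One remark: the ``technical heart'' you flag is in fact harmless, and your own V-shape analysis already shows why. For $i<j$, the equality $d(v_p,e_i)=d(v_p,e_j)$ forces $i+j=2p-1$, which pins down $p$ uniquely; consequently \emph{any} subset of $V(P_n)$ containing at least two vertices is an edge resolving set of $P_n$. So once $|W|\ge 2$ (which holds for all $n\ge 6$ since $\lfloor(n+2)/4\rfloor\ge 2$), edge-resolution is automatic and no block-by-block or boundary case analysis is needed.
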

\proof
Let $V(P_n)=\{v_1, v_2, \ldots, v_n\}$ with $n-1$ edges and $E(P_n=\{e_i = v_iv_{i+1} ; 1 \leq i \leq n-1 \})$. Recall that, by Theorem \ref{tab:EP}(i) (resp. Proposition \ref{tab:DP}) we have $\B_e(P_n) = 1$ (resp. $\g_{ve}(P_n) = \lfloor \frac{n+2}{4} \rfloor$). We divide the proof into a number of cases:

\noindent{\textbf{Case 1:} If $n=2$, then $W = \{v_1\}$ will dominate and gave a unique code to the edge $e_1$.

\noindent{\textbf{Case 2:} If $n=3$, then $W = \{v_1\}$ will dominate and gave a unique code to the edges $e_1$ and $e_2$.

\noindent{\textbf{Case 3:} If $n=4$, then there are four vertices and three edges. Without loss of generality, we assume that $v_1, v_2, v_3, v_4$
be the four vertices and $e_1, e_2, e_3$ be the three edges. In this case, the vertex $v_2$
will dominate all the edges, however, does not give a unique code to the all edges. As, $r(e_1 | v_2) = r(e_2 | v_2)$,
the edge $e_1$ and the edge $e_2$ have same code or representation. Similarly, the vertex $v_3$
will dominate all the edges, however, does not give a unique code to the all edges. As, $r(e_2 | v_3) = r(e_3 | v_3)$,
the edge $e_2$ and the edge $e_3$ have same code or representation. So, the set $W = \{v_1,v_2\}$ will dominate and
provide a unique code to all the edges of $P_4$ and we obtain that $\g_{emd}(P_4) = \B_{e}(P_4) +1 = 2$.

\noindent{\textbf{Case 4:} For $n=5$, there are five vertices, say, $v_1, v_2, v_3, v_4, v_5$ and four edge, say, $e_1, e_2, e_3, e_4$.
In this case, the vertex $v_3$ will dominate all edges, however, does not yield a unique code to the all edges. The edge $e_2$ and the edge $e_3$
would have the same representation. So, the set $W = \{v_1,v_3\}$ will dominate and gives a unique code to the all edges of $P_5$
and we obtain that $\g_{emd}(P_5) = \B_{e}(P_5) +1=2$.

\noindent{\textbf{Case 5:} Assume $n\geq6$. Let $W\subset V(P_n)$, such that
$$W =
\begin{cases}
\{v_{2+4i} : 0 \leq i < \lfloor \frac{n+2}{4} \rfloor \}, & n \equiv 0(\mod 4);\\
\{v_{3+4i} : 0 \leq i < \lfloor \frac{n+2}{4} \rfloor \}, & n \equiv 1(\mod 4);\\
\{v_{1+4i} : 0 \leq i < \lfloor \frac{n+2}{4} \rfloor \}, & n \equiv 2,3(\mod 4).
        \end{cases}$$

Since $|W|= \lfloor \frac{n+2}{4} \rfloor$, by Proposition \ref{tab:DP}, the set $W$ dominates all the edges of $P_n$ and no two edges have the same representation with respect
to $W$. We obtain the desired result.
\wbull

\begin{theorem}
Let $C_n$ ba a cycle graph with $n \geq 3$. Then
$$\g_{emd}(C_n) =
\begin{cases}
\B_e(C_n) = 2, & 3\leq n \leq7;\\
\B_e(C_n)+ 1 = 3, & n=8;\\
\g_{ve}(C_n) = \lfloor\frac{n+3}{4}\rfloor, &n \geq 9.
        \end{cases}$$
\end{theorem}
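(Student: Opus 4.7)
My plan is to prove the three cases by matching an explicit construction against a sharp lower bound. The universal lower bound $\gamma_{emd}(C_n)\ge\max\{\beta_e(C_n),\gamma_{ve}(C_n)\}$ is immediate from the definition, since a $ve$-dominant edge resolving set is by definition simultaneously an edge resolving set and a $ve$-dominating set. Combined with Theorem \ref{tab:EC} and Proposition \ref{tab:DP}(iii), this gives $\gamma_{emd}(C_n)\ge 2$ for $3\le n\le 8$ and $\gamma_{emd}(C_n)\ge\lfloor(n+3)/4\rfloor$ for $n\ge 9$.

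For $3\le n\le 7$ the lower bound is already $2$, so it suffices to exhibit, for each such $n$, an explicit pair $W=\{v_i,v_j\}\subset V(C_n)$ which is simultaneously a $ve$-dominating set and an edge resolving set. Since each vertex of $C_n$ (for $n\ge 5$) $ve$-dominates the four edges at cyclic distance $\le 1$ from it, a well-chosen pair at cyclic distance roughly $n/2$ will cover all edges for these small values, and one then checks by inspection of the edge-distance codes that the pair is edge-resolving. I would tabulate the chosen pair in each of the five subcases and give the short code table, analogous to Table \ref{tab:MRS}.

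The case $n=8$ is the one that forces a strict inequality with the max-lower-bound, so it needs its own combinatorial argument. Each vertex of $C_8$ $ve$-dominates exactly four consecutive edges on the cycle. Hence a $2$-element $ve$-dominating set of $C_8$ must consist of two vertices whose covered $4$-arcs partition the $8$ edges, i.e.\ an antipodal pair $\{v_i,v_{i+4}\}$. But any antipodal pair in $C_8$ admits a reflection of the cycle that swaps the two vertices while mapping certain edges onto distinct edges with the same pair of edge-distances to $\{v_i,v_{i+4}\}$, so no antipodal pair is edge-resolving. This rules out size $2$ and gives $\gamma_{emd}(C_8)\ge 3$; the matching upper bound comes from verifying that an explicit triple such as $\{v_1,v_3,v_5\}$ is both $ve$-dominating and edge-resolving.

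For $n\ge 9$ the lower bound is $\lfloor(n+3)/4\rfloor$, and the task is to construct a $ve$-dominating set of that exact size which is also edge-resolving. I would take a set of vertices spaced essentially four apart along $C_n$, with the residual gap absorbed in one place, mirroring the path construction in the previous theorem; concretely, one defines $W$ by cases on $n\pmod 4$. Counting covered $4$-arcs shows $W$ is $ve$-dominating of the required cardinality. The hard part, and the main obstacle of the whole theorem, is verifying edge-resolvability when $n\equiv 0\pmod 4$, because the equispaced choice $\{v_1,v_5,\dots,v_{n-3}\}$ has a nontrivial cyclic symmetry. The plan is to show that, despite the symmetry, the three-coordinate (or more) edge-distance codes of any two edges in a common orbit are in fact cyclic rotations of one another and therefore distinct; for edges within a single $4$-arc between two consecutive members of $W$, the two flanking coordinates already pin down the edge uniquely, while edges in different arcs are separated by a third coordinate. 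If a subtle collision survives in a specific residue class, I would perturb $W$ by a single vertex-shift (as is done for the path), which preserves both the size and the $ve$-domination property.
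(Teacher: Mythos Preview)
Your approach matches the paper's: the $\max\{\beta_e,\gamma_{ve}\}$ lower bound, then case-by-case explicit constructions for the upper bound, with the $n\ge 9$ construction split according to $n\bmod 4$. The paper likewise exhibits specific pairs for $3\le n\le 7$, a specific triple $\{u_1,u_3,u_5\}$ for $n=8$, and for $n\ge 9$ simply writes down $W$ and asserts (without the detailed verification you flag as ``the hard part'') that it dominates and resolves all edges; so on the large-$n$ range you are at the same level of rigor as the paper, and your plan to handle possible symmetry collisions by a single vertex shift is sound.

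The one substantive difference is at $n=8$, where your argument is actually more complete than the paper's. You observe that any two-vertex $ve$-dominating set of $C_8$ must be an antipodal pair, since each vertex covers a $4$-arc of edges and two such arcs must partition the eight edges, and then invoke the reflection fixing that antipodal pair to show it cannot edge-resolve. The paper, by contrast, checks only the single pair $\{u_1,u_5\}$ and notes the code collision $r(e_j\mid\{u_1,u_5\})=r(e_{n+j-1}\mid\{u_1,u_5\})$, never explaining why every candidate two-set must be of this form; your counting-plus-symmetry step closes that gap cleanly. One minor caution on the small cases: for $n=6$ the antipodal pair $\{v_1,v_4\}$ is $ve$-dominating but \emph{not} edge-resolving (the codes of $e_3$ and $e_4$ both equal $(2,0)$), so your ``cyclic distance roughly $n/2$'' heuristic needs exactly the kind of adjustment you already anticipate --- the paper takes $\{u_1,u_5\}$ for $n=6,7$.
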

\proof
Let $V(C_n)=\{u_1,u_2,\ldots,u_n \}$ and $E(C_n)=\{e_i= u_iu_{i+1}; 1 \leq i \leq n, u_{n+1}=u_1\}$.
Recall that, by Theorem \ref{tab:EP}(ii) (resp. Proposition \ref{tab:DP}) we have $\B_e(C_n) = 2$
(resp. $\g_{ve}(C_n) = \lfloor\frac{n+3}{4}\rfloor$). We divide the proof into a number of cases:

\noindent{\textbf{Case 1:} If $n=3$, then the vertex $u_1$, will dominate all edges of $C_3$, however, the edges
$e_1$ and $e_2$ would have the same representation as $\B_e(C_3) = 2$. The set $W =\{u_1, u_2\}$ is a vertex-edge dominant
edge metric basis. So, we get $\g_{emd}(C_3) = 2$.

\noindent{\textbf{Case 2:} If $n=4$, then the vertex $u_1$ will dominate all edges of $C_4$ but the edges
$e_1$ (resp. $e_2$) would have the same representation with $e_4$ (resp. $e_3$) because of the fact that $\B_e(C_4) = 2$.
The set $W =\{u_1, u_2\}$ is a vertex-edge dominant edge metric basis. Therefore, we obtain $\g_{emd}(C_4) = 2$.

\noindent{\textbf{Case 3:} If $n=5$, then any vertex $u \in V(C_5)$ will not dominate all edges of $C_5$ as $\g_{ve}(C_5) = 2$ by Proposition \ref{tab:DP}.
However, the set $W =\{u_1, u_4\}$ is a vertex-edge dominant edge metric basis as it would generate distinct codes
for all the edges of $C_5$. This implies that $\g_{emd}(C_5) = 2$.

\noindent{\textbf{Case 4:} If $n=6$ or $n=7$, then similar to Case 3, a single vertex $u \in V(C_6)$ or $u \in V(C_7)$ can never dominate
all edges of $C_6$ or $C_7$ as $\g_{ve}(C_6)=\g_{ve}(C_7) = 2$ by Proposition \ref{tab:DP}. However, the set $W =\{u_1, u_5\}$ is a vertex-edge dominant edge metric
basis as it would generate distinct codes for all the edges of $C_6$ and $C_7$. This implies that $\g_{emd}(C_6)=\g_{emd}(C_7) = 2$.

\noindent{\textbf{Case 5:} If $n=8$, then the vertices $u_1$ and $u_5$ dominate all edges of $C_8$. However, they do not give
different codes of representation to all the edges as $r(e_j | \{u_1, u_5 \}) = r(e_{n+j-1} | \{u_1, u_5 \})$. Note that $W=\{u_1, u_3, u_5\}$ is a vertex-edge dominant edge metric
basis as it would generate distinct codes for all the edges of $C_8$. This implies that $\g_{emd}(C_8) = 3$.

\noindent{\textbf{Case 6:} Assume $n\geq9$. Let $W\subset V(C_n)$, such that
$$W =
\begin{cases}
\{u_{1+5i} : 0 \leq i < \lfloor \frac{n+3}{4} \rfloor \}, & n \equiv 0, 2, 3(\mod 4);\\
\{u_{n-1}, u_{1+5i} : 0 \leq i < \lfloor \frac{n+3}{4} \rfloor -1 \}, & n \equiv 1(\mod 4).
        \end{cases}$$

Since, $|W|= \lfloor \frac{n+3}{4} \rfloor$, by Proposition \ref{tab:DP}, the set $W$ dominates all the edges of $C_n$ and no two edges have the same representation with respect
to $W$. We obtain the desired result.
\wbull

\subsection{Complete graphs and complete bipartite graphs}
The following theorem computes the vertex-edge dominant edge metric dimension of the complete graph $K_n$ and the complete bipartite graph $K_{n,m}$.
\begin{theorem}
Let $K_n$ be the complete graph with order $n \geq 2$. Then $\g_{emd}(K_n) = \B_{e}(K_n) = n-1$.
\end{theorem}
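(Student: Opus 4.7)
The plan is to sandwich $\g_{emd}(K_n)$ between $n-1$ and $n-1$ by invoking the two definitions separately. On the one hand, every $ve$-dominant edge resolving set is, by definition, an edge resolving set; hence its cardinality is at least $\B_e(K_n)$. On the other hand, any minimum edge resolving set of $K_n$ will automatically be a $ve$-dominating set because $K_n$ is complete.

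For the lower bound I would simply observe that the definition of $\g_{emd}$ requires the chosen set to be an edge resolving set, and then quote Theorem \ref{tab:EK}(iii) (i.e.\ $\B_e(K_n)=n-1$) to conclude $\g_{emd}(K_n)\geq n-1$. For the upper bound I would exhibit an explicit set of size $n-1$, for instance $W=\{v_1,v_2,\ldots,v_{n-1}\}\subset V(K_n)$. By Theorem \ref{tab:EK}(iii) such a set is an edge resolving set (one can either cite the theorem directly or verify it by noting that for any edge $e=v_iv_j$ with $i<j<n$ the code entries are $0$ at positions $i,j$ and $1$ elsewhere, while for the edges $v_iv_n$ the code is $0$ at position $i$ and $1$ elsewhere, so all codes are distinct). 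To check the $ve$-domination property, note that in $K_n$ every two vertices are adjacent, so any single vertex $w\in V(K_n)$ already $ve$-dominates every edge $xy$: if $w\in\{x,y\}$ condition (i) holds, and otherwise $wx\in E(K_n)$ so condition (ii) holds. Thus $W$ is trivially $ve$-dominating, so it is a $ve$-dominant edge resolving set of size $n-1$, giving $\g_{emd}(K_n)\leq n-1$.

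Combining the two bounds yields $\g_{emd}(K_n)=\B_e(K_n)=n-1$. There is no real obstacle here; the only thing worth flagging is that the equality $\g_{emd}=\B_e$ for $K_n$ reflects the extreme fact that in a complete graph the $ve$-domination constraint is automatically satisfied by any nonempty subset (in particular $\g_{ve}(K_n)=1$ by Proposition \ref{tab:DP}(i)), so the edge resolving constraint is the only binding one.
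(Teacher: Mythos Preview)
Your proof is correct and follows essentially the same approach as the paper: both arguments use that $\B_e(K_n)=n-1$ (Theorem \ref{tab:EK}(iii)) together with the observation that in $K_n$ any single vertex already $ve$-dominates every edge (equivalently $\g_{ve}(K_n)=1$ by Proposition \ref{tab:DP}(i)), so any edge metric basis is automatically a $ve$-dominant edge resolving set. Your presentation is slightly more explicit in separating the lower bound $\g_{emd}\geq\B_e$ from the upper bound, but the content is identical.
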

\proof
By Proposition \ref{tab:DP}, we have $\g_{ve}(K_n)=1$. Any vertex $u \in V(K_n)$
will dominate all edges of the graph $K_n$ but, by Theorem \ref{tab:EK}(iii), the edge dimension of $K_n$ is $n-1$ i.e.
$\B_e(K_n) = n-1$ . Thus, any vertex from an edge metric basis for the graph $K_n$ will dominate all
edges of the complete graph $K_n$. Therefore, we obtain $\g_{emd}(K_n)=\B_{e}(K_n)=n-1$.
\wbull

\begin{theorem}\label{g-emd-kmn}
Let $K_{n,m}$ be the complete bipartite graph, where $n,m\geq2$. Then $\g_{emd}(K_{n,m})=\B_{e}(K_{n,m})=n+m-2$.
\end{theorem}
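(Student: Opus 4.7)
My plan is to establish the equality by sandwiching: use the already-known edge metric dimension as a lower bound, then exhibit an explicit set of the same cardinality that is simultaneously $ve$-dominating and edge-resolving. The key observation driving the whole proof is that every $ve$-dominant edge resolving set is, by definition, an edge resolving set, so $\g_{emd}(\G) \geq \B_e(\G)$ for every graph $\G$. Applied here via Theorem \ref{tab:EKB}(iv), this immediately yields $\g_{emd}(K_{n,m}) \geq n+m-2$.

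For the upper bound, let $A=\{a_1,\ldots,a_n\}$ and $B=\{b_1,\ldots,b_m\}$ be the two parts of $K_{n,m}$, and consider $W=(A\setminus\{a_1\})\cup(B\setminus\{b_1\})$, which has cardinality exactly $n+m-2$. I will argue that $W$ is a $ve$-dominant edge resolving set. For domination, note that by Proposition \ref{tab:DP}(i) we already have $\g_{ve}(K_{n,m})=1$, so any nonempty subset of $V(K_{n,m})$ (and in particular $W$, which is nonempty since $n,m\geq 2$) is automatically a $ve$-dominating set. So the nontrivial part is edge resolving.

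To verify edge resolving, I will compute the code of an arbitrary edge $a_ib_j$ with respect to $W$. For any vertex $w\in A$, one has $d(w,a_ib_j)=0$ when $w=a_i$ and $d(w,a_ib_j)=1$ otherwise, because $w$ is adjacent to $b_j$ in $K_{n,m}$; symmetrically for $w\in B$. Thus the code of $a_ib_j$ is an all-ones vector except possibly for a $0$ in coordinate $a_i$ (if $a_i\neq a_1$) and/or a $0$ in coordinate $b_j$ (if $b_j\neq b_1$). A short case analysis then distinguishes the four edge types: edges $a_ib_j$ with $i,j\geq 2$ have exactly two zeros, one in the $A$-coordinates and one in the $B$-coordinates, which pins down $(i,j)$; edges $a_1b_j$ with $j\geq 2$ give exactly one zero, located in a $B$-coordinate, which pins down $j$; edges $a_ib_1$ with $i\geq 2$ give exactly one zero, in an $A$-coordinate, pinning down $i$; and the edge $a_1b_1$ is the unique edge whose code is all ones. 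So all $nm$ edges receive pairwise distinct codes, $W$ is an edge resolving set, and hence $\g_{emd}(K_{n,m})\leq n+m-2$.

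There is no real obstacle here beyond bookkeeping in the four-way case distinction; the crucial structural point is simply that in $K_{n,m}$ a part vertex sees an edge at distance $0$ or $1$ according to incidence, so the code reduces to the indicator of incidence and automatically resolves, while $ve$-domination comes for free from the extreme connectivity of $K_{n,m}$.
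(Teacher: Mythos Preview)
Your proof is correct and follows the paper's strategy exactly: the lower bound is $\g_{emd}\ge\B_e$, and the upper bound comes from observing that an edge metric basis of $K_{n,m}$ already $ve$-dominates; the paper simply cites Theorem~\ref{tab:EKB}(iv) rather than re-verifying edge resolving for an explicit $W$ as you do. One small logical slip worth tightening: from $\g_{ve}(K_{n,m})=1$ alone it does not follow that \emph{every} nonempty subset is $ve$-dominating (that requires every vertex, not merely some vertex, to $ve$-dominate all edges); in $K_{n,m}$ this stronger statement is indeed true---any vertex has the whole opposite part as its neighbourhood, so every edge is incident to a neighbour---and that is what both you and the paper actually need.
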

\proof
By Proposition \ref{tab:DP}, we have $\g_{ve}(K_{n,m})=1$. Any vertex $u \in V(K_{n,m})$ will
dominate all edges of the graph $K_{n,m}$ but, by Theorem \ref{tab:EKB}(iv), the edge dimension of complete bipartite graph $K_{n,m}$ is $n+m-2$
i.e. $\B_e(K_{n,m}) = n+m-2$. Thus, any vertex from an edge metric basis for the graph
$K_{n,m}$ will dominate all edges of the complete bipartite graph $K_{n,m}$. Therefore, we obtain $\g_{emd}(K_{n,m})=\B_{e}(K_{n,m})=n+m-2$.
\wbull

As a special case to Theorem \ref{g-emd-kmn}, we obtain the following.
\begin{corollary}
Let $S_{1,n}$ be the star graph, where $n \geq 3$. Then $\g_{emd}(S_{1,n})=\B_{e}(S_{1,n})=n-1$.
\end{corollary}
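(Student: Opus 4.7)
The plan is to mimic the two-step argument used in Theorem \ref{g-emd-kmn}: establish that every nonempty vertex set $ve$-dominates $S_{1,n}$, and separately pin down the edge metric dimension at $n-1$, so that any edge metric basis is automatically a $ve$-dominant edge resolving set. The one subtlety is that Theorem \ref{tab:EKB}(iv) assumes $n,m\geq 2$, so the value $\B_e(S_{1,n})=n-1$ is not literally covered by the cited preliminaries and must be derived directly.

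First I would fix notation: write $c$ for the center of $S_{1,n}$ and $v_1,\dots,v_n$ for its leaves, so the edges are $e_i=cv_i$ for $1\leq i\leq n$. The relevant distance table is tiny: $d(c,e_i)=0$ for all $i$, and $d(v_j,e_i)=0$ if $j=i$ while $d(v_j,e_i)=1$ if $j\neq i$. From this I would immediately observe that every single vertex of $S_{1,n}$ is a $ve$-dominating set: the center $c$ is incident to every edge, and each leaf $v_i$ is incident to $e_i$ and, via $c$, $ve$-dominates each remaining $e_j$ through condition (ii) of the $ve$-domination definition. In particular any nonempty $W\subset V(S_{1,n})$ is $ve$-dominating, so we only need to ensure $W$ edge-resolves.

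Next I would compute $\B_e(S_{1,n})=n-1$. For the upper bound, take $W=\{v_1,\dots,v_{n-1}\}$: then $r(e_i|W)$ has a $0$ in coordinate $i$ and $1$ elsewhere for $i\leq n-1$, while $r(e_n|W)$ is the all-ones vector, so the $n$ codes are pairwise distinct. For the lower bound I would note that $c$ contributes $0$ to every edge's code and so does not resolve any pair, and that for $i\neq j$ the edges $e_i$ and $e_j$ are resolved by a leaf $v_k$ only when $k\in\{i,j\}$; hence at most one leaf can be omitted from any edge resolving set, forcing at least $n-1$ leaves.

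Finally, combining the two steps: any edge metric basis of $S_{1,n}$ is $ve$-dominating by the first step, so $\g_{emd}(S_{1,n})\leq \B_e(S_{1,n})=n-1$, while the reverse inequality $\g_{emd}(S_{1,n})\geq \B_e(S_{1,n})$ is immediate from the definitions. The main obstacle—really the only non-bookkeeping issue—is the lower bound $\B_e(S_{1,n})\geq n-1$, and the argument above handles it in one line because the star has such a rigid distance structure. I would expect the full written proof to fit in a short paragraph.
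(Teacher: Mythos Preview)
Your argument is correct. The paper itself offers no proof beyond the remark that the corollary is a special case of Theorem~\ref{g-emd-kmn}; you rightly observe that the hypothesis $n,m\geq 2$ there (and in Theorem~\ref{tab:EKB}(iv)) does not literally cover $S_{1,n}=K_{1,n}$, and you supply the missing verification of $\B_e(S_{1,n})=n-1$ directly. Your two-step structure---every single vertex $ve$-dominates, and $\B_e=n-1$---is precisely the argument underlying Theorem~\ref{g-emd-kmn}, so the approaches coincide in spirit; the only difference is that you carry out explicitly the boundary case the paper silently extrapolates.
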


\subsection{Wheel and fan graphs}

For the ($n+1$)-dimensional wheel and fan graphs, first we compute its dominant metric dimension and the vertex-edge domination number.

\begin{proposition}
Let $\G$ be a wheel graph $W_{1,n}$ or fan graph $F_{1,n}$. Then,
\begin{equation*}
\g_{md}(\G)=\left\{
                       \begin{array}{ll}
                         \B(\G), & \hbox{$n \equiv 0,2, 4 (\mod 5)$;} \\
                         \B(\G) + 1, & \hbox{$n \equiv 1, 3 (\mod 5)$.}
                       \end{array}
                     \right.
\end{equation*}
\end{proposition}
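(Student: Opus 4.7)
The plan is to reduce the problem to a combinatorial analysis of ``gap sequences'' along the rim (resp.\ spine) of $\G$. Since $\mathrm{diam}(W_{1,n}) = \mathrm{diam}(F_{1,n}) = 2$, every coordinate of a code lies in $\{0,1,2\}$: for an outer vertex $u \notin S$ and any outer vertex $s \in S$ we have $d(u,s)=1$ exactly when $u$ and $s$ are consecutive on the rim/spine and $d(u,s)=2$ otherwise, while the apex is at distance $1$ from every outer vertex. Consequently the behaviour of a candidate set $S$ of outer vertices is completely determined by its gap sequence $(g_1,\ldots,g_k)$, cyclic for the wheel and linear (with two boundary gaps) for the fan.

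Translating the definitions into gap language, I will first establish: (i) $S$ dominates every outer vertex iff $g_i \leq 3$ for every $i$; (ii) $S$ resolves every pair of outer vertices iff $g_i \leq 4$ for every $i$, at most one $g_i$ equals $4$, and no two consecutive $g_i$ are both $\geq 3$. Condition (ii) is forced because a vertex $v_j \in S$ flanked by two gaps $\geq 3$ has both of its rim neighbours $v_{j-1},v_{j+1}$ sitting outside $S$ with no other $S$-neighbour on the rim, so both inherit the same code, namely a single $1$ at coordinate $v_j$ and $2$'s elsewhere. The apex is dominated by any nonempty $S$, and its code $(1,\ldots,1)$ is distinguished from every outer vertex as soon as $|S| \geq 3$.

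Combining (i) and (ii), a dominant resolving set of outer vertices must have all $g_i \in \{1,2,3\}$ and no two consecutive $g_i$ equal to $3$. Letting $c$ denote the number of $3$'s among the $k$ gaps, elementary counting gives $c \geq n-2k$, while the non-adjacency constraint forces $c \leq \lfloor k/2 \rfloor$ in the cyclic case. Substituting $k = \B(\G) = \lfloor (2n+2)/5 \rfloor$, the compatibility inequality $n - 2k \leq \lfloor k/2 \rfloor$ reduces to a congruence check that holds for $n \equiv 0,2,4 \pmod 5$ and fails for $n \equiv 1,3 \pmod 5$. In the three good residues I will exhibit explicit cyclic patterns built from repeated blocks $(2,3)$ padded with extra $2$'s that realise both bounds, producing a dominant metric basis and hence $\g_{md}(\G) = \B(\G)$. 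In the two bad residues the clash forces $\g_{md}(\G) \geq \B(\G)+1$, and a matching upper bound is obtained by starting from a metric basis containing exactly one gap of $4$ and inserting into $S$ the unique undominated middle vertex of that gap.

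For the fan $F_{1,n}$ the argument runs parallel, with linear gap sequences (including two boundary gaps) in place of cyclic ones, plus the side condition that the spine endpoints $v_1$ and $v_n$ each lie within distance $1$ of some element of $S$, which I enforce by bounding those boundary gaps by $2$. This shifts the threshold for $c$ by a bounded constant but does not alter the residue of $n$ modulo $5$ that controls joint attainability, so the same case split holds. The main obstacle will be the cyclic-pigeonhole argument ruling out $\B(\G)$-sized dominant resolving sets in the bad residues, combined with the parallel verification in the fan's linear/boundary setting; a small number of tiny-$n$ cases may need independent verification.
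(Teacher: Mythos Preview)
Your approach is correct in outline and genuinely different from the paper's. Where you encode both conditions as constraints on a cyclic gap sequence and decide feasibility by the single inequality $n-2k \le \lfloor k/2\rfloor$, the paper proceeds residue by residue: for $n\equiv 0,2,4\pmod 5$ it writes down an explicit set $D=\{y_{5i+2},y_{5i+4}:0\le i\le\cdots\}$ and checks directly that it both dominates and resolves; for $n\equiv 1,3\pmod 5$ it exhibits three particular sets $D_1,D_2,D_3$ of size $\B(\G)$, verifies that each fails one of the two conditions, and then adjoins the apex $x$ to obtain a working set of size $\B(\G)+1$. Your counting argument is more systematic and in fact more complete on the lower bound: checking three specific candidates does not by itself exclude every $\B(\G)$-element set, whereas your inequality does. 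The paper's route, on the other hand, produces explicit bases with no combinatorial machinery.

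Two points need tightening before your argument is airtight. First, the gap analysis tacitly assumes $S$ lies entirely on the rim; you must rule out $x\in S$. This is easy once noticed: if $x\in S$ with $|S|=k=\B(\G)$, the $x$-coordinate equals $1$ on every outer vertex, so $S\setminus\{x\}$ already separates all outer pairs, and for $k\ge 4$ this makes $S\setminus\{x\}$ a full resolving set of size $k-1<\B(\G)$, a contradiction (the cases $n=7,8$ you already flagged as needing direct verification). Second, the fan case is less parallel than your sketch suggests. With a linear spine the two boundary gaps behave differently from internal gaps: a boundary gap of size $g$ contributes $g-2$ all-$2$ vertices rather than $g-3$, and a boundary gap of $2$ next to an internal gap $\ge 3$ already produces the same single-$1$ collision as two internal gaps $\ge 3$ (the endpoint $y_1$ and $y_3$ both get a lone $1$ at position $y_2$). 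You should carry the feasibility inequality through for the fan explicitly rather than asserting the residue split transfers unchanged.
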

\proof
Let $V(\G) = \{x,y_1,y_2,\ldots,y_n\}$, where the vertex $x$ has degree $n$ and the vertices $y_1,y_2,\ldots,y_n$ induce a cycle $C_n$ in wheel graph $\G = W_{1,n}$ (
or induce a path $P_n$ in fan graph $\G = F_{1,n}$). Let $S = \{x\}$. Then, every vertex of $\G$ is dominated by $S$. This implies that $\g(\G) = 1$.  Buczkowski et al. \cite{buc} showed that $\B(\G = W_{1,n}) = \lfloor \frac{2n+2}{5} \rfloor$, where $n\geq 7$ and Hernando et al. \cite{her} showed that $\B(\G =F_{1,n}) = \lfloor \frac{2n+2}{5} \rfloor$, where $n\geq7$. Based on resolvability of vertices in $\G$, we further divide the proof
into five cases:

\noindent{\textbf{Case 1:} Assume that $n\equiv0(\mod 5)$.

Let $n = 5k$ where $k \geq 2$ and $\lfloor \frac{2n+2}{5} \rfloor = 2k$.
Note that for $D =\{y_{5i+2},y_{5i+4}:0\leq i \leq k-1\}$, $V(\G)\backslash D = N(D)$. Since $| D | = \lfloor \frac{2n+2}{5} \rfloor$.
This implies that $D$ is a dominant resolving set for $\G$ where $n\equiv 0 (\mod 5)$.
Thus, $\g_{md}(\G) = \B(\G)$ for $n\equiv 0 (\mod 5)$.

\noindent{\textbf{Case 2:} Assume $n\equiv1(\mod 5)$.

Let $n = 5k+1$ where $k \geq 2$ and $\lfloor \frac{2n+2}{5} \rfloor = 2k$.
Note that for $D_1 = \{ y_{5i+2}, y_{5i+4} : 0 \leq i \leq k-1 \}$, we have
$V(\G)\backslash D_1 \neq N(D_1)$, because the vertex $y_{5k+1}$ is not dominated by any vertex of the set $D_1$. The set $D_1$ is resolving set but is not dominating set for $\G$. This implies that $D_1$ is not a dominant resolving set for $\G$, where $n\equiv 1(\mod 5)$.
Note that for $D_2 = \{ y_{5i+2}, y_{5i+4} : 0 \leq i \leq k-2 \} \cup \{y_{5k-3}, y_{5k+1}\}$, we have
$V(\G)\backslash D_2 \neq N(D_2)$, because the vertex $y_{5k-1}$ is not dominated by any vertex of the set $D_2$. The set $D_2$ is resolving set but is not dominating set for $\G$. This implies that $D_2$ is not a dominant resolving set for $\G$, where $n\equiv 1(\mod 5)$.
Note that for $D_3 = \{ y_{5i+2}, y_{5i+4} : 0 \leq i \leq k-2 \} \cup \{y_{5k-3}, y_{5k}\}$, we have
$V(\G)\backslash D_3 = N(D_3)$. The set $D_3$ is dominating set but is not resolving set for $\G$ as $r (y_{5k-1} | D_3) = r(y_{5k+1} | D_3) $. This implies that $D_3$ is not a dominant resolving set of $\G$, where $n\equiv 1(\mod 5)$.
Note that for $D = S \cup D_1 = \{x\} \cup \{y_{5i+2}, y_{5i+4} : 0 \leq i \leq k-1 \}$, we have $V(\G)\backslash D = N(D)$, the set $D$ is dominating and resolving set for $\G$. This implies that $D$ is a dominant resolving set for $\G$, where $n\equiv 1(\mod 5)$. Thus, we obtain $\g_{md}(\G) = \B(\G)+1$, where $n\equiv 1(\mod 5)$.

\noindent{\textbf{Case 3:} Assume $n\equiv2(\mod 5)$.

Let $n = 5k+2$ where $k \geq 1$ and $\lfloor \frac{2n+2}{5} \rfloor = 2k+1$.
For $D = \{ y_{5i+2}, y_{5i+4} : 0 \leq i \leq k-1 \} \cup \{y_{5k+2}\}$ we have $V(\G)\backslash D = N(D)$. This
implies that $D$ is a dominant resolving set of $\G$ where $n\equiv 2(\mod 5)$.
Therefore, we obtain $\g_{md}(\G) = \B(\G)$, where $n\equiv 2(\mod 5)$.

\noindent{\textbf{Case 4:} Assume $n\equiv3(\mod 5)$.

Let $n = 5k+3$ where $k \geq 1$ and $\lfloor \frac{2n+2}{5} \rfloor = 2k+1$.
For $D_1 = \{ y_{5i+2}, y_{5i+4} : 0 \leq i \leq k-1 \} \cup \{y_{5k+3}\}$ we have
$V(\G)\backslash D_1 \neq N(D_1)$, because the vertex $y_{5k+1}$ is not dominated by any vertex of the set $D_1$. The set $D_1$ is resolving set but is not dominating set for $\G$.
This implies that the set $D_1$ is not a dominant resolving set for $\G$ where $n\equiv 3 (\mod 5)$.
For $D_2 = \{ y_{5i+2}, y_{5i+4} : 0 \leq i \leq k-1 \} \cup \{y_{5k+1}\}$ we have
$V(\G)\backslash D_2 \neq N(D_2)$, because the vertex $y_{5k+3}$ is not dominated by any vertex of the set $D_2$. The set $D_2$ is resolving set but is not dominating set for $\G$.
This implies that the set $D_2$ is not a dominant resolving set for $\G$ where $n\equiv 3 (\mod 5)$.
For $D_3 = \{ y_{5i+2}, y_{5i+4} : 0 \leq i \leq k-1 \} \cup \{y_{5k+2}\}$ we have
$V(\G)\backslash D_3 = N(D_3)$. The set $D_3$ is dominating set but is not resolving set for $\G$ as $r(y_{5k+1} | D_3) =r(y_{5k+3} | D_3) $. This implies that $D_3$ is not a dominant resolving set for $\G$, where $n\equiv 1(\mod 5)$.
Note that for $D = S \cup D_1 = \{x\} \cup \{ y_{5i+2}, y_{5i+4} : 0 \leq i \leq k-1 \} \cup \{y_{5k+3}\} $, we have $V(\G)\backslash D = N(D)$. This implies that the set $D$ is a dominant resolving set for $\G$ where $n\equiv 3 (\mod 5)$. Thus, we obtain $\g_{md}(\G) = \B(\G) + 1$, where $n\equiv3(\mod 5)$.

\noindent{\textbf{Case 5:} Assume $n\equiv4(\mod 5)$.

Let $n = 5+4k$ where $k \geq 1$ and $\lfloor \frac{2n+2}{5} \rfloor = 2k+2$.
Note that if $D = \{ y_{5i+2}, y_{5i+4} : 0 \leq i \leq k\}$, then we have $V(\G)\backslash D = N(D)$.
This implies that $D$ is a dominant resolving set for $\G$ where $n\equiv4(\mod 5)$.
Therefore, $\g_{md}(\G) = \B(\G)$, where $n\equiv4(\mod 5)$. This completed the proof.
\wbull

The following proposition computes the vertex-edge domination number of wheel graphs.
\begin{proposition}
For the wheel graph $W_{1,n}$, fan graph $F_{1,n}$ and star graph $S_{1,n}$, we have $$\g_{ve}(W_{1,n}) = \g_{ve}(F_{1,n}) =\g_{ve}(S_{1,n}) = 1.$$
\end{proposition}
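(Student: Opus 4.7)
The plan is to apply Proposition \ref{thm:propo1} to each of the three graphs, exploiting the fact that each of $W_{1,n}$, $F_{1,n}$, and $S_{1,n}$ contains a universal ``hub'' vertex adjacent to every other vertex of the graph.

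First, I would fix notation: in each graph $\G \in \{W_{1,n}, F_{1,n}, S_{1,n}\}$, label the hub vertex by $x$ and the remaining vertices by $y_1,\dots,y_n$, noting that by construction $x$ is adjacent to every $y_i$. Hence for every vertex $v \in V(\G)$, we have $d(x,v) \leq 1$. Consequently, the set $Y = \{y \in V(\G) : d(x,y) = 2\}$ defined in Proposition \ref{thm:propo1} is empty, and the emptyset is vacuously an independent set.

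Next, I would invoke Proposition \ref{thm:propo1} with the vertex $x$: the two conditions (every vertex within distance two of $x$, and $Y$ independent) are both satisfied, so $\g_{ve}(\G) \leq 1$. Since $\G$ has at least one edge for each of the three families (as $n \geq 1$), we have $\g_{ve}(\G) \geq 1$, yielding $\g_{ve}(\G) = 1$. Applying this argument uniformly to $\G = W_{1,n}$, $\G = F_{1,n}$, and $\G = S_{1,n}$ gives the claimed equalities.

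There is essentially no obstacle here: the result is a straightforward consequence of Proposition \ref{thm:propo1} together with the presence of a universal vertex in each family. The only mild point worth being explicit about is that, although one could prove this directly by checking that $x$ alone $ve$-dominates every edge (since every edge is either incident to $x$ or adjacent to an edge incident to $x$), routing the argument through Proposition \ref{thm:propo1} is cleaner and already furnishes the matching lower bound implicitly via the non-triviality of the graph.
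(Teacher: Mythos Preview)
Your proof is correct, but it takes a different route from the paper's. The paper argues directly from the definition of $ve$-domination: with the same labeling $V(\G)=\{x,y_1,\dots,y_n\}$, it observes that in $S_{1,n}$ every edge is incident to $x$, while in $W_{1,n}$ and $F_{1,n}$ every edge is incident to some vertex of $N(x)=\{y_1,\dots,y_n\}$; hence $T=\{x\}$ $ve$-dominates all edges and $\g_{ve}=1$. You instead invoke Proposition~\ref{thm:propo1}, noting that the universality of $x$ forces $Y=\{y:d(x,y)=2\}=\emptyset$, which is vacuously independent, so the characterization yields $\g_{ve}(\G)=1$ immediately. Both arguments are equally short; the paper's is self-contained from the definition, whereas yours leverages a known characterization (and in fact that proposition already gives equality, so your separate lower-bound remark is redundant, though harmless). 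You even anticipate the paper's direct argument in your closing paragraph.
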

\proof
Assume that  $V(W_{1,n}) = V(F_{1,n}) =V(S_{1,n}) =\{x,y_1,y_2,\ldots,y_n\}$,
where the vertex $x$ has degree $n$ and the vertices $y_1,y_2,\ldots,y_n$ induce a cycle $C_n$ in wheel graph and
induce a path $P_n$ in the fan graph. In $S_{1,n}$, all the edges are incident to the vertex $x$ so, the set $T= \{x\}$
dominate all edges of star graph $S_{1,n}$. In the wheel and the fan graph, all edges are incident to $N(x)=\{y_1,y_2,\ldots,y_n\}$
so, the set $T= \{x\}$ dominate all edges of $W_{1,n}$ and $F_{1,n}$. Therefore, the set $T= \{x\}$ dominate
all edges of the wheel graph $W_{1,n}$, the fan graph $F_{1,n}$ and the star graph $S_{1,n}$. Thus, we obtain
$\g_{ve}(W_{1,n}) = \g_{ve}(F_{1,n}) =\g_{ve}(S_{1,n}) = 1$.
\wbull

The next theorem calculates the vertex-edge dominant edge metric dimension of wheel graphs.
\begin{theorem}
Let $W_{1,n}$ be a wheel graph $W_{1,n}$ and $F_{1,n}$ be a fan graph with $n \geq 5$. Then
$$\g_{emd}(W_{1,n}) = \g_{emd}(F_{1,n}) = n-1.$$
\end{theorem}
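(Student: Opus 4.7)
The plan is to establish $\g_{emd}(W_{1,n}) = \g_{emd}(F_{1,n}) = n-1$ by matching lower and upper bounds. For the lower bound, note that every $ve$-dominant edge resolving set is in particular an edge resolving set, so $\g_{emd}(\G) \geq \B_e(\G)$ for any graph $\G$. By Theorem \ref{tab:EW}(vi) and Theorem \ref{tab:EF}(vii), $\B_e(W_{1,n}) = n-1$ and $\B_e(F_{1,n}) = n-1$ for $n \geq 5$, so both parameters are bounded below by $n-1$.

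For the upper bound, in both graphs I would take the candidate set $W = \{y_1, y_2, \ldots, y_{n-1}\}$, i.e., all non-hub vertices except one. Clearly $|W| = n-1$, so it suffices to verify two things: (a) $W$ is a $ve$-dominating set, and (b) $W$ is an edge resolving set. Part (a) is easy: every spoke edge $xy_i$ is $ve$-dominated by any $y_j \in W$ via condition (ii) of the definition (since $y_jx$ is an edge sharing the vertex $x$ with $xy_i$), and every rim/path edge $y_iy_{i+1}$ has at least one endpoint in $W$ (in the wheel, the only rim edges missing $y_n$ from $W$, namely $y_{n-1}y_n$ and $y_ny_1$, still have $y_{n-1}$ or $y_1$ as their other endpoint; in the fan, the only affected edge is $y_{n-1}y_n$, which is incident to $y_{n-1} \in W$).

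For part (b), the main obstacle is the bookkeeping for the edge codes, but it is tractable because $\mathrm{diam}(W_{1,n}) = \mathrm{diam}(F_{1,n}) = 2$, so every coordinate of every edge code is $0$, $1$, or $2$. I would compute, for $y_j \in W$: $d(y_j, xy_i) = 0$ if $i = j$ and $1$ otherwise; and $d(y_j, y_iy_{i+1}) = 0$ if $j \in \{i,i+1\}$, $d=1$ if $j$ is a rim/path neighbor of $y_i$ or $y_{i+1}$ (not equal to them), and $d=2$ otherwise. Then I would sort the edges by code type: the spoke $xy_n$ is the unique edge with code $(1,1,\ldots,1)$; the spokes $xy_i$ for $i \leq n-1$ each have a single $0$ in coordinate $i$ and $1$'s elsewhere, pairwise distinct and distinct from the two-zero or mixed-entry codes of the rim/path edges; the rim/path edges $y_iy_{i+1}$ with both endpoints in $W$ have two $0$'s at the distinct position pair $\{i, i+1\}$, so are mutually distinct; and the one or two ``boundary'' edges involving $y_n$ (namely $y_{n-1}y_n$ and, for the wheel, $y_ny_1$) have exactly one $0$ accompanied by at least one $2$-entry (using $n \geq 5$), which distinguishes them from the spoke codes (no $2$'s) and, via their unique $0$-position, from each other.

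Combining the lower bound $\g_{emd} \geq n-1$ with the exhibited $ve$-dominant edge resolving set of cardinality $n-1$ yields the claimed equality for both families. The hypothesis $n \geq 5$ is used precisely to ensure both that Theorem \ref{tab:EW} and Theorem \ref{tab:EF} give $\B_e = n-1$ as the lower bound, and that the codes of the ``boundary'' rim/path edges contain a $2$-entry separating them from the spoke codes.
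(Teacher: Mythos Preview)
Your proposal is correct and follows essentially the same approach as the paper: both use the lower bound $\g_{emd}\geq\B_e=n-1$ from Theorems~\ref{tab:EW}(vi) and~\ref{tab:EF}(vii), and both exhibit the same candidate set $W=\{y_1,\ldots,y_{n-1}\}$ for the upper bound. The paper simply asserts that $W$ is an edge metric basis and a $ve$-dominating set, while you actually carry out the code computation and the case analysis (in particular the $2$-entry argument separating the boundary rim edges from the spokes), so your argument is more detailed but not genuinely different.
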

\proof
For $n \geq 5$, the edge metric dimension of wheel graph is $\B_e(W_{1,n}) = n-1$ by Theorem \ref{tab:EW}(vi) and the edge metric dimension of fan graph is $\B_e(F_{1,n}) = n-1$ by Theorem \ref{tab:EF}(vii). The set $W = \{y_1,y_2,\ldots, y_{n-1}\}$ is an edge metric basis/generator. The set $W$ dominates all the
edges of wheel graph $W_{1,n}$ and fan graph $F_{1,n}$. The set $W$ is also the vertex-edge dominant set. Therefore, we obtain that
$\g_{emd}(W_{1,n}) = \g_{emd}(F_{1,n}) =\B_{e}(W_{1,n}) = \B_{e}(W_{1,n}) = n-1$, if $n\geq5$.
\wbull

\section{The vertex-edge dominant edge metric dimension of Cartesian products}\label{sec4}

In this section, we study the vertex-edge dominant edge metric dimension in some standard Cartesian products such as $P_n \Box P_r$ and $C_n \Box P_r$.

The graph $\G =P_n \Box P_2$ is the Cartesian product of the graphs $P_n$ and $P_2$.
The vertex set and edge set of $\G$ are $V(P_n \Box P_2)=\{u_i, vj; 1 \leq i,j \leq n\}$ and
$E(P_n \Box P_2) = \{ a_i=u_iu_{i+1}, b_j=v_jv_{j+1}; 1 \leq i,j \leq n-1 \} \cup \{c_i=u_iv_j; i=j, 1 \leq i,j \leq n\}$,
respectively.

\begin{proposition}\label{propo:vedominationLn}
Let $\G =P_n\Box P_2$ be the Cartesian product of $P_n$ and $P_2$. Then, we have
$$\g_{ve}(\G) = \Big\lceil \frac{n}{3} \Big\rceil.$$
\end{proposition}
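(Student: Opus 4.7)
The plan is to bound $\g_{ve}(\G)$ separately from below and from above.

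For the lower bound, I would focus on the rung edges $c_1,\ldots,c_n$. The vertices that ve-dominate $c_i=u_iv_i$ are exactly those in $N[u_i]\cup N[v_i]=\{u_{i-1},u_i,u_{i+1},v_{i-1},v_i,v_{i+1}\}$ (discarding invalid indices). Conversely, a single vertex $u_j$ (or $v_j$) lies in such a neighborhood only when $i\in\{j-1,j,j+1\}$, so it ve-dominates at most three of the $n$ rungs. Since every rung must be ve-dominated by some element of a ve-dominating set $T$, we get $3|T|\ge n$, hence $|T|\ge\lceil n/3\rceil$. This is sharper than what the generic bound in Proposition \ref{thm:propo2} would give, because it restricts the counting to a single ``slim'' family of edges.

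For the upper bound, I would exhibit a ve-dominating set of size $\lceil n/3\rceil$. When $n\equiv 0\pmod 3$, take
\[
T=\{w_j:1\le j\le n/3\},\qquad w_j=\begin{cases}u_{3j-1}, & j\text{ odd},\\ v_{3j-1}, & j\text{ even}.\end{cases}
\]
A direct computation shows that $u_{3j-1}$ ve-dominates
\[
\{a_{3j-3},a_{3j-2},a_{3j-1},a_{3j},\,c_{3j-2},c_{3j-1},c_{3j},\,b_{3j-2},b_{3j-1}\},
\]
whereas $v_{3j-1}$ ve-dominates
\[
\{a_{3j-2},a_{3j-1},\,c_{3j-2},c_{3j-1},c_{3j},\,b_{3j-3},b_{3j-2},b_{3j-1},b_{3j}\}.
\]
The alternation between the two rails is chosen precisely so that the $b$-edge missed at the right end of a $u$-block (namely $b_{3j}$) is picked up by the next $v$-vertex, and symmetrically the $a$-edge missed at the right end of a $v$-block is picked up by the next $u$-vertex. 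For $n\equiv 1$ or $n\equiv 2\pmod 3$ the same interior pattern is used, and the last vertex $w_k$ with $k=\lceil n/3\rceil$ is shifted inward (to one of $u_n$, $v_n$, $u_{n-1}$, $v_{n-1}$, chosen according to the parity of $k$ and the value of $n\bmod 3$) so that it still covers the last rung and the remaining one or two boundary $a$- and $b$-edges.

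I expect the main obstacle to be precisely this boundary handling. The interior alternation is uniform and its correctness is transparent from the two displays above, but verifying that the shifted endpoint really finishes the coverage requires splitting into subcases by $n\bmod 6$ (since one must track both the residue of $n$ modulo $3$ and whether the terminal block is a $u$-block or a $v$-block). Once the four residue classes have been written out, matching the covered edge lists with the full edge set $\{a_i\}\cup\{b_i\}\cup\{c_i\}$ is a routine check.
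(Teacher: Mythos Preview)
Your argument is correct, and the upper-bound construction is essentially the one the paper uses: the paper also places one vertex in every third column, alternating between the two rails, and handles the boundary by splitting into residue classes modulo~$6$. Your description of why the alternation is needed (each $u$-block misses one $b$-edge on its right, picked up by the next $v$-block, and symmetrically) is more explicit than the paper's, which simply lists the sets case by case.

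The lower bound is where you diverge from the paper. The paper invokes Proposition~\ref{thm:propo2}, i.e.\ $\g_{ve}(\G)\ge\lceil m/\Delta^{2}\rceil$, with $m=3n-2$ and $\Delta=3$. You instead count only the rung edges $c_1,\dots,c_n$ and observe that any vertex ve-dominates at most three of them; this is a cleaner and more self-contained argument. One small correction: your bound is not actually sharper than Proposition~\ref{thm:propo2} here, since $\lceil(3n-2)/9\rceil=\lceil n/3\rceil$ for all $n\ge 3$ (check the three residues of $3n$ modulo~$9$). It is, however, more transparent, because it isolates the bottleneck---the rungs---rather than averaging over the whole edge set.
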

\proof
Note that the order (resp. size) of $\G$ is $2n$ (resp. $3n-2$). Furthermore, degrees of all vertices of $\G$ are 3
except the vertices $u_1, u_n, v_1, v_n$ which are of degree 2. We divide the proof into a number of cases:

\noindent{\textbf{Case 1:} Assume that $n=1,2,3$.

For $n=1,2$, the singleton set $T =\{u_1\}$ is a minimum vertex-edge dominant set, whereas for $n=3$,
the singleton set $T =\{u_2\}$ is a minimum vertex-edge dominant set.

\noindent{\textbf{Case 2:} Assume that $n=4,5,6$.

The set $T =\{u_1, v_4\}$ (resp. $T =\{u_2, v_5\}$) is a minimum vertex-edge dominant set
if $n=4,5$ (resp. $n=6$).

\noindent{\textbf{Case 3:} Assume that $n\geq7$ and $n\equiv0(\mod6)$.
The minimum vertex-edge dominant set in this case is
$$T = \{u_{2+6i}; 0 \leq i < \Big\lceil \frac{n}{6} \Big\rceil \} \cup \{v_{5+6j}; 0 \leq j < \Big\lceil \frac{n}{6} \Big\rceil\}.$$

\noindent{\textbf{Case 4:} Assume that $n\equiv1,2(\mod6)$ and $n\geq7$.
The minimum vertex-edge dominant set in this case is
$$T = \{u_{1+6i}; 0 \leq i < \Big\lceil \frac{n}{6} \Big\rceil\} \cup \{v_{4+6j}; 0 \leq j < \Big\lceil \frac{n}{6} \Big\rceil -1\}.$$

\noindent{\textbf{Case 5:} Assume that $n\equiv3(\mod6)$ and $n\geq7$.
The minimum vertex-edge dominant set in this case is
$$T = \{u_{2+6i}; 0 \leq i < \Big\lceil \frac{n}{6} \Big\rceil\} \cup \{v_{5+6j}; 0 \leq j < \Big\lceil \frac{n}{6} \Big\rceil -1\}.$$

\noindent{\textbf{Case 6:} Assume that $n\equiv4,5(\mod6)$ and $n\geq7$. The set
$$T = \{u_{1+6i}; 0 \leq i < \Big\lceil \frac{n}{6} \Big\rceil\} \cup \{v_{4+6j}; 0 \leq j < \Big\lceil \frac{n}{6} \Big\rceil \}.$$
provides the minimum vertex-edge dominant set of $\G$.
From Case 3, Case 4, Case 5 and Case 6 it is clear that the $\g_{ve}(\G) \leq \lceil \frac{n}{3} \rceil$. Since, the maximum degree of $\G$ is $3$ i.e. $\Delta(\G) = 3$, by Proposition \ref{thm:propo2}, we obtained that $\g_{ve}(\G) \geq \lceil \frac{n}{3} \rceil$. This show that $\g_{ve}(\G) = \lceil \frac{n}{3} \rceil$.
\wbull

\begin{theorem}
Let $\G =P_n\Box P_2$ be the Cartesian product of $P_n$ and $P_2$. Then, we have
$$\g_{emd}(\G) =
\begin{cases}
\B_{e}(\G )=1 & n = 1;\\
\B_{e}(\G)=2 & n = 2 ;\\
\B_{e}(\G)+1=3 & n = 3,4,5,6; \\
\g_{ve}(\G) = \lceil\frac{n}{3}\rceil &  n\geq7.
\end{cases}$$
\end{theorem}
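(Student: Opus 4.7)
The strategy is to split into a small regime $n\leq 6$, handled by direct construction together with a short exhaustive ruling-out, and a generic regime $n\geq 7$, for which matching bounds follow from Proposition~\ref{propo:vedominationLn} and the explicit $\g_{ve}$-sets constructed in its proof. For $n=1$ the graph is $P_2$ and the claim reduces to the path theorem. For $n=2$ the graph is $C_4$; Theorem~\ref{thm:edimLn}(v) gives $\beta_e=2$, and a one-line check shows that any edge-basis already $ve$-dominates every edge of $C_4$. For $n\in\{3,4,5,6\}$, Theorem~\ref{thm:edimLn}(v) still gives $\beta_e(\G)=2$, so $\g_{emd}(\G)\geq 2$; to rule out equality, I would enumerate the orbits of unordered pairs of vertices under the automorphism group of the ladder $P_n\Box P_2$, and show that in each orbit either some pair of edges receives identical code, or some edge fails to be $ve$-dominated. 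An explicit $3$-vertex witness then matches the claimed upper bound.

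For $n\geq 7$, the lower bound $\g_{emd}(\G)\geq \lceil n/3\rceil$ is immediate, because any $ve$-dominant edge resolving set is in particular a $ve$-dominating set and so has cardinality at least $\g_{ve}(\G)=\lceil n/3\rceil$ by Proposition~\ref{propo:vedominationLn}. For the upper bound, I would reuse the very sets $T$ built case-by-case in the proof of Proposition~\ref{propo:vedominationLn}, split according to $n\pmod 6$. Since $T$ consists of vertices alternating between the two rails at spacing roughly three columns, the distance from a selected witness $u_k$ or $v_k$ to an edge of type $a_i$, $b_j$, or $c_i$ depends almost linearly on $|k-i|$ plus a $0/1$ correction encoding whether the edge and the witness share a rail. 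A column-by-column comparison of these linear expressions then distinguishes all three edge types against one another and against themselves at different positions.

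The main obstacle is the edge-code verification inside the generic case. The troublesome collisions are of three kinds: a rail edge versus a rung edge sitting in the same column, two symmetric rail edges straddling a single witness vertex, and boundary edges near columns $1$ and $n$ where the pattern modulo $6$ is truncated. I would isolate these three collision types and show that the alternation of rails inside $T$, rather than merely the column positions, always breaks the ties, thereby confirming that the $\g_{ve}$-sets of Proposition~\ref{propo:vedominationLn} are already edge resolving sets for $n\geq 7$ and closing the proof.
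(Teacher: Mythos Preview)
Your proposal is correct and follows essentially the same route as the paper: the same small/large case split, the same lower bound $\g_{emd}(\G)\geq\g_{ve}(\G)=\lceil n/3\rceil$ for $n\geq 7$, and the same reuse of the explicit $\g_{ve}$-sets $T$ from Proposition~\ref{propo:vedominationLn} for the matching upper bound. If anything, your plan is more careful than the paper's own argument: for $n\in\{3,4,5,6\}$ the paper only checks that one particular edge basis fails to $ve$-dominate and one particular $\g_{ve}$-set fails to edge-resolve, without the exhaustive ruling-out of all $2$-sets that you propose, and for $n\geq 7$ the paper simply asserts that the $\g_{ve}$-set is edge resolving without the distance analysis you outline.
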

\proof
By Theorem \ref{thm:edimLn} (resp. Proposition \ref{propo:vedominationLn}), we have that $\B_e(\G =P_n \Box P_2) = 2$
(resp. $\g_{ve}(\G)=\lceil\frac{n}{3}\rceil$). Next, we divide the proof into a number of cases:

\noindent{\textbf{Case 1:} Assume that $n=1,2,3$.

Note that we have $\B_e(\G)=1$ (resp. $\B_e(\G)=2$) for $n=1$ (resp. $n=2,3$).
Moreover, $\g_{ve}(\G)=1$ for $n=1,2,3$. Then, $W =\{u_1\}$ provide unique
representations to every edge of $\G$ if $n=1$. Similarly, $T=\{u_1, u_2\}$
(resp. $T=\{u_1, u_2, u_3\}$)) provide distinct codes to the edges of $\G$
if $n=2$ (resp. $n=3$). Thus, we obtain
$$\g_{emd}(\G) =
\begin{cases}
1, & n = 1;\\
2, & n = 2 ;\\
3, & n = 3.
\end{cases}$$

\noindent{\textbf{Case 2:} Assume that $n=4,5,6$.

Note that, for $n=4$, the set $A=\{u_1, u_4 \}$ is a minimum edge resolving set of $\G$.
In a similar manner, the set $A=\{u_1,u_5\}$ (resp. $A=\{u_1,u_6\}$) provides a minimum
edge resolving set for $n=5$ (resp. $n=6$). Therefore, we obtain that $\B_e(\G)=2$, if $n=4,5,6$.
Moreover, the set $T =\{u_1,v_4\}$ is a minimum vertex-edge dominating set for $P_4 \Box P_2$.
And, the set $T =\{u_1,v_5\}$ (resp. $T =\{u_2,v_5\}$) is a minimum vertex-edge dominating set
for $P_5\Box P_2$ (resp. $P_6\Box P_2$). This implies that $\g_{ve}(\G)=2$ if $n=4,5,6$.
However, the set $A$ can not dominate the edge $b_2=v_2v_3$ if $n=4$, the edges $b_2=v_2v_3, b_3=v_3v_4$
if $n=5$ and the edges $a_3=u_3u_4, b_2=v_2v_3, b_3=v_3v_4, b_4=v_4v_5, c_3=u_3v_3, c_4=u_4v_4$ if $n=6$.
Consequently, the set $T$ does not resolve all the edges of $\G$ for $n=4,5,6$. Additionally, the set
\begin{equation*}
W=\left\{
  \begin{array}{ll}
    \{u_1, u_4, v_4\}, & \hbox{$n = 4$;} \\
    \{u_1, u_5, v_5\}, & \hbox{$n = 5$;} \\
    \{u_2, u_6, v_5\}, & \hbox{$n = 6$.}
  \end{array}
\right.
\end{equation*}
dominates and delivers unique codes to all the edges of $P_n \Box P_2$, if $n=4,5,6$.
This implies that $\g_{emd}(\G)=\B_{e}(\G)+1=3$, for $n=4,5,6$.

\noindent{\textbf{Case 3:} Finally, we assume that $n\geq7$.

The set $A=\{u_1, u_n \}$ is a minimum edge resolving set.
Thus, we have $\B_e(\G)=2$. By Proposition \ref{propo:vedominationLn}, the vertex-edge domination number of $\G$ is
$\g_{ve}(\G)=\lceil\frac{n}{3}\rceil$. Note that the minimum vertex-edge dominating set is also a minimum
edge resolving set for $P_n \Box P_2$. Thus, we obtain that $\g_{emd}(\G)=\g_{ve}(\G)=\lceil \frac{n}{3}\rceil$,
if $n\geq7$.
\wbull

\begin{proposition}\label{thm:edimDn}
Let $\G =C_n \Box P_2$ be the Cartesian product of $C_n$ and $P_2$, with $n\geq3$. Then,
$\B_e(\G)=3$.
\end{proposition}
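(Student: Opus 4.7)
The plan is to prove both $\B_e(\G)\le 3$ and $\B_e(\G)\ge 3$. For the upper bound, I propose the explicit set $W=\{u_1,u_2,v_1\}$ of size three. The key observation is that the edges of $\G$ split into three classes separated by the \emph{layer signature} $r(\cdot|W)_3-r(\cdot|W)_1$: for any outer-cycle edge $a_i$ this difference equals $+1$ (since $d(a_i,v_1)=d(a_i,u_1)+1$), for any inner-cycle edge $b_i$ it equals $-1$, and for any rung $c_i$ it equals $0$ (since $d(c_i,v_1)=d_{C_n}(i,1)=d(c_i,u_1)$). This rules out any cross-class code collision. Within each class, the first two coordinates of $r(\cdot|W)$ reduce to the edge code of the projected edge in $C_n$ (for the $a$- and $b$-classes) or to the vertex code of $i$ in $C_n$ (for the $c$-class), always with respect to the pair $\{1,2\}$; a direct verification using $d_{C_n}(i,k)=\min\{|i-k|,\,n-|i-k|\}$ shows that $\{1,2\}$ is simultaneously a metric basis and an edge metric basis of $C_n$ for every $n\ge 3$, so within-class codes are pairwise distinct and $W$ is an edge resolving set.

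For the lower bound, suppose for contradiction that $\{x,y\}$ is an edge resolving set of size two. Because $C_n\Box P_2$ is vertex-transitive, we may relabel to take $x=u_1$. Consider the three edges incident to $u_1$, namely $a_n=u_nu_1$, $a_1=u_1u_2$, and $c_1=u_1v_1$, each of which has distance $0$ from $u_1$; separating them falls on $y$ alone. I split on the layer of $y$. If $y=u_j$, then $d(c_1,u_j)=d_{C_n}(1,j)$, $d(a_n,u_j)=\min\{d_{C_n}(n,j),d_{C_n}(1,j)\}$, and $d(a_1,u_j)=\min\{d_{C_n}(1,j),d_{C_n}(2,j)\}$. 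Separating $c_1$ from $a_n$ requires $d_{C_n}(n,j)<d_{C_n}(1,j)$ and separating $c_1$ from $a_1$ requires $d_{C_n}(2,j)<d_{C_n}(1,j)$; since $n$ and $2$ lie on opposite sides of $1$ on the cycle, both strict inequalities can hold only at the antipode $j=1+n/2$ of vertex $1$, which exists only for even $n$, and there $d_{C_n}(n,j)=d_{C_n}(2,j)$ forces $d(a_n,u_j)=d(a_1,u_j)$, collapsing $a_n$ and $a_1$ into the same code. If instead $y=v_j$, the analogous formulas shift the two $a$-distances by $+1$ while $d(c_1,v_j)=d_{C_n}(1,j)$ remains unshifted; the same case split yields a collision $c_1\equiv a_n$, or $c_1\equiv a_1$, or (when neither side-condition holds and both minima equal $d_{C_n}(1,j)$) the collision $a_n\equiv a_1$. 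Each case contradicts $\{x,y\}$ being an edge resolving set.

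The principal technical step is the case analysis around the two neighbors $n$ and $2$ of vertex $1$ on the cycle and, in particular, handling the antipodal configuration for even $n$, where separating $c_1$ from both $a$-edges is achievable only by making the two $a$-edges themselves coincide. Once this observation is secured and the layer-signature decomposition is written down, the upper bound reduces cleanly to the known fact that the pair $\{1,2\}$ resolves $C_n$ both vertex- and edge-wise, leaving only routine checks.
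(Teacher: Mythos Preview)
Your proof is correct, and it follows a genuinely different route from the paper on both halves.

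For the upper bound, the paper uses the basis $A=\{x_1,x_2,x_{\lceil n/2\rceil+1}\}$, three vertices \emph{all in the same copy of $C_n$} with one placed near the antipode, and then writes out the full code of every edge in a parity case split on $n$. Your choice $W=\{u_1,u_2,v_1\}$ places one vertex in the other layer; the resulting ``layer signature'' $r(\cdot\mid W)_3-r(\cdot\mid W)_1\in\{+1,0,-1\}$ separates the three edge classes $\{a_i\},\{c_i\},\{b_i\}$ at once and reduces the in-class verification to the known fact that an adjacent pair is simultaneously a metric and an edge metric basis of $C_n$. This is conceptually cleaner and avoids the paper's exhaustive tabulation.

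For the lower bound, the paper simply cites the regular-graph inequality $\B_e(\G)\ge 1+\lceil\log_2 r\rceil$ (Corollary due to Filipovi\'{c} et al.), which for a cubic graph gives $\B_e(\G)\ge 3$ in one line. Your argument is instead a self-contained local obstruction: after using vertex-transitivity to pin $x=u_1$, the three edges incident to $u_1$ already force a code collision for any second vertex $y$, with the even-$n$ antipodal case handled by the observation that separating $c_1$ from both $a$-edges makes the two $a$-edges collide. This is longer but elementary and independent of external results. Either approach is valid; the paper's is shorter, yours is more direct.
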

\proof
Let $V(\G)=\{x_i, y_i ; 1 \leq i \leq n\}$ and $E(\G)= \{e_i=x_ix_{i+1}, f_i=y_iy_{i+1}, g_i=x_iy_i ;1 \leq i \leq n\}$.
Clearly the order (resp. size) of $\G$ is $2n$ (resp. $3n$). Let $A=\{x_1, x_2, x_{l+1} \}$
where $l=\lceil\frac{n}{2}\rceil$ be an edge metric basis. We divide the proof further into some cases:\\

\noindent{\textbf{Case 1:} Assume that $n\equiv0(\mod2)$.
In this case, the representations of edges of $\G$ is as follow:

\noindent{\textit{Subcase 1.1:} Assume $e_i=x_ix_{i+1}$ for $i=1,2,3,\ldots,n$. Then,
$$r(e_i| A) =
\begin{cases}
(0, 0, l-i) & i = 1;\\
(i-1, i-2, l-i) &2 \leq i \leq l;\\
(l-1, l-1, i-l-1) & i=l+1;\\
(n-i, n-i+1, i-l-1) & l+2 \leq i \leq n.
\end{cases}$$
\noindent{\textit{Subcase 1.2:} Assume $f_i=y_iy_{i+1}$ for $i=1,2,3,\ldots,n$. Then,
$$r(f_i| A) =
\begin{cases}
(1, 1, l ) & i = 1;\\
(i, i-1, l-i+1) &2 \leq i \leq l;\\
(l, l, i-l) & i=l+1;\\
(n-i+1, n-i+2, i-l) & l+2 \leq i \leq n.
        \end{cases}$$
\noindent{\textit{Subcase 1.3:} Assume $g_i=x_iy_i$ for $i=1,2,3,\ldots,n$. Then,
$$r(g_i| A) =
\begin{cases}
(0, 1, l ) & i = 1;\\
(i-1, i-2, l-i+1) &2 \leq i \leq l+1;\\
(n-i+1, n-i+2, i-l-1) & l+2 \leq i \leq n.
        \end{cases}$$
For the case where $n$ is even, it is easily observed that no two edges have the same representation. Therefore,
$\B_e(\G)\leq3$, if $n\equiv0(\mod2)$.

\noindent{\textbf{Case 2:} Assume that $n\equiv1(\mod2)$.
The representations for the edges of $\G$ in this cases are as follow:

\noindent{\textit{Subcase 2.1:} Assume $e_i=x_ix_{i+1}$ for $i=1,2,3,\ldots,n$. Then,
$$r(e_i| A) =
\begin{cases}
(0, 0, l-i ) & i = 1;\\
(i-1, i-2, l-i) &2 \leq i \leq l;\\
(n-i, n-i+1, i-l-1) & l+1 \leq i \leq n.
        \end{cases}$$
\noindent{\textit{Subcase 2.2:} Assume $f_i=y_iy_{i+1}$ for $i=1,2,3,\ldots,n$. Then,
$$r(f_i| A) =
\begin{cases}
(1, 1, l ) & i = 1;\\
(i, i-1, l-i+1) &2 \leq i \leq l;\\
(n-i+1, n-i+2, i-l) & l+1 \leq i \leq n.
        \end{cases}$$
\noindent{\textit{Subcase 2.3:} Assume $g_i=x_iy_i$ for $i=1,2,3,\ldots,n$. Then,
$$r(g_i| A) =
\begin{cases}
(0, 1, l-1 ) & i = 1;\\
(i-1, i-2, l-i+1) &2 \leq i \leq l;\\
(l-1,l-1,i-l-1) & i=l+1;\\
(n-i+1, n-i+2, i-l-1) & l+2 \leq i \leq n.
        \end{cases}$$
Thus, for the case where $n$ is odd, it can be seen that no two edges have the same representation. Therefore,
$\B_e(\G)\leq3$, if $n\equiv0,1(\mod2)$.
Since $\G$ is 3-regular graph i.e. $r = 3$, by Corollary \ref{tab:fili}, we obtain that $\B_e(\G) \geq 3$.
This shows that $\B_e(\G)=3$ for any $n$. This completes the proof.
\wbull

\begin{proposition}\label{thm:vedominationDn}
Let $\G =C_n \Box P_2$ be the Cartesian product of $C_n$, $n \geq 4$ and $P_2$. Then,
$\g_{ve}(\G) = \Big\lceil \frac{|E(\G)|}{9} \Big\rceil$.
\end{proposition}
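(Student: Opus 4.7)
The plan is to match $\g_{ve}(\G)$ to $\lceil n/3\rceil$ on both sides, noting first that $|V(\G)|=2n$ and $|E(\G)|=3n$ so that the claimed value $\lceil|E(\G)|/9\rceil$ equals $\lceil n/3\rceil$. The lower bound falls out of Proposition \ref{thm:propo2}: since $\G$ is $3$-regular, $\Delta(\G)=3$ and $m=3n$, hence
$$\g_{ve}(\G)\geq\Big\lceil \frac{3n}{9}\Big\rceil=\Big\lceil \frac{n}{3}\Big\rceil.$$
This inequality is tight exactly when the chosen vertices ve-dominate near-disjoint sets of edges, each of size close to the maximum possible $\Delta(\G)^2=9$.

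For the matching upper bound, I would first verify the local count: because $\G$ is triangle-free for $n\geq 4$, a direct enumeration using the labels $e_i, f_i, g_i$ from Proposition \ref{thm:edimDn} shows that each $x_i$ ve-dominates exactly the nine edges
$$\{e_{i-2},e_{i-1},e_i,e_{i+1},g_{i-1},g_i,g_{i+1},f_{i-1},f_i\}$$
(indices taken modulo $n$), and symmetrically each $y_i$ ve-dominates $\{f_{i-2},f_{i-1},f_i,f_{i+1},g_{i-1},g_i,g_{i+1},e_{i-1},e_i\}$. This ``nine-wide block'' structure motivates a periodic construction alternating between the two cycles. For the cleanest case $n=6k$, I would take
$$W=\{x_{2+6t}:0\leq t\leq k-1\}\cup\{y_{5+6t}:0\leq t\leq k-1\},$$
so that the pair $(x_2,y_5)$ covers the $18$ edges indexed $1,\ldots,6$ (together with the wrap-around $e_n$) disjointly, and successive pairs tile the remaining edges of $\G$ without overlap. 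This yields a ve-dominating set of size $2k=\lceil n/3\rceil$.

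The main obstacle will be the residues $n\not\equiv 0\pmod 6$, where the period-$6$ alternation fails to close up cleanly around the cycle. I would handle each residue separately, in the spirit of the case analysis used in Proposition \ref{propo:vedominationLn}: pack as many full alternating blocks as possible, then adjust the last one or two chosen vertices by shifting them along the appropriate cycle so that the leftover edges in the wrap-around segment are still ve-dominated. Since $\lceil n/3\rceil$ grows by one for every three additional cyclic positions, there is just enough slack to absorb each possible leftover using a single extra well-placed vertex. Verifying a short finite list of residue classes together with the periodic extension would then give a ve-dominating set of the required size $\lceil n/3\rceil$, which combined with the lower bound from Proposition \ref{thm:propo2} completes the proof.
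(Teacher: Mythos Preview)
Your proposal is correct and follows essentially the same route as the paper: the lower bound comes from $3$-regularity (the paper states the nine-edge count directly, you invoke Proposition~\ref{thm:propo2}, which is the same observation in general form), and the upper bound is obtained by an alternating period-$6$ placement of vertices on the two cycles, completed by a case analysis on $n\bmod 6$. The paper's explicit sets $\{x_{1+6i},y_{4+6i}\}$ are a shift of your $\{x_{2+6t},y_{5+6t}\}$, and the paper likewise treats small $n$ and the residual classes individually, exactly as you outline.
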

\proof
Note that $\G$ is cubic graph, i.e. every vertex has exactly three edges incident with it.
Therefore, by definition of vertex-edge domination, each vertex of $\G$ will dominate nine
of its edges. For instance, the vertex $x_i$ (resp. $y_i$) will dominate the 9 edges
$e_{i-2}, e_{i-1}, e_{i}, e_{i+1}, f_{i-1}, f_{i}, g_{i-1}, g_{i}, g_{i+1}$
(resp. $f_{i-2}, f_{i-1}, f_{i}, f_{i+1}, e_{i-1}, e_{i}, g_{i-1}, g_{i}, g_{i+1}$).
Moreover, all these edges are incident to the vertices of $N(x_i)$ and $N(y_i)$, respectively.
This implies that the vertex-edge domination number of $\G$ is at least $\lceil\frac{|E(\G)|}{9}\rceil$.
We divide the vertex-edge domination set of $\G$ into a number of cases:

\noindent{\textbf{Case 1:} Let $4\leq n\leq9$.

If $n = 4, 5, 6$ (resp. $n = 7, 8, 9$), then $T =\{x_1, y_4\}$ (resp. $T =\{x_1, y_4, x_7\}$)
is a minimum vertex-edge domination set of $\G$.

\noindent{\textbf{Case 2:} Let $n \geq 10$.
We divide the case further into two subcases:

\noindent{\textit{Subcase 2.1:} Assume that $n \equiv 0, 4, 5(\mod6)$. The minimum vertex-edge dominant set in this case is
$$T = \{x_{1+6i}, y_{4+6i}; 0 \leq i \leq \lceil \frac{n}{6} \rceil -1\}.$$

\noindent{\textit{Subcase 2.2:} Assume that $n \equiv 1, 2, 3(\mod6)$. The set
$$T = \{x_{1+6i}; 0 \leq i \leq \lceil \frac{n}{6} \rceil -1\} \cup \{y_{4+6i}; 0 \leq i \leq \lceil \frac{n}{6} \rceil -2\}.$$
is a minimum vertex-edge domination set of $\G$. Since $\mid T \mid=\lceil\frac{|E(\G)|}{9}\rceil$, we obtain that the vertex-edge domination set is at most $\lceil\frac{|E(\G)|}{9}\rceil$. This completes the proof.
\wbull

\begin{theorem}
Let $\G =C_n \Box P_2$ be the Cartesian product of $C_n$, $n \geq 4$ and $P_2$. Then, we have
$$\g_{emd}(\G) =
\begin{cases}
\big\lceil \frac{|E(\G)|}{9} \big\rceil +1, & 4\leq n \leq9;\\
\big\lceil \frac{|E(\G)|}{9} \big\rceil, &n \geq 10.
        \end{cases}$$
\end{theorem}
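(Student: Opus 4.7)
The plan is to sandwich $\g_{emd}(\G)$ between two natural lower bounds and to exhibit explicit extremal sets in each range. Since every $ve$-dominant edge resolving set is in particular both a $ve$-dominating set and an edge resolving set,
$$\g_{emd}(\G)\;\geq\;\max\bigl\{\g_{ve}(\G),\,\B_e(\G)\bigr\}\;=\;\max\Bigl\{\Big\lceil\tfrac{|E(\G)|}{9}\Big\rceil,\;3\Bigr\}$$
by Propositions \ref{thm:edimDn} and \ref{thm:vedominationDn}. For $4\leq n\leq 6$ this maximum equals $3=\lceil|E(\G)|/9\rceil+1$; for $n\geq 10$ it equals $\lceil n/3\rceil=\lceil|E(\G)|/9\rceil$; for $7\leq n\leq 9$ it equals $3$, which is one less than the claimed value and requires an extra argument.

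For the matching upper bound when $n\geq 10$, I would take the explicit minimum $ve$-dominating set $T$ produced in each subcase of Proposition \ref{thm:vedominationDn} and verify, class-by-class modulo $6$, that the edge codes with respect to $T$ are pairwise distinct. The computation mirrors Proposition \ref{thm:edimDn}: two of the vertices of $T$ (one on each side of the prism, offset by roughly half the cycle) localize any given edge to a short cyclic arc, and the remaining landmarks separate $e$-, $f$- and $g$-type edges on that arc. For $n=4,5,6$ I would simply produce one $3$-element $ve$-dominant edge resolving set per case, for example of the form $\{x_1,x_{\lceil n/2\rceil+1},y_j\}$ with $j$ chosen to cover the remaining $y$-side edges, and verify all $3n$ edge codes directly. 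For $n=7,8,9$, an explicit $4$-element $ve$-dominant edge resolving set is obtained by adjoining one well-chosen vertex to a minimum $ve$-dominating set from Proposition \ref{thm:vedominationDn}, again checked by direct enumeration.

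The main obstacle is the lower bound $\g_{emd}(\G)\geq 4$ in the exceptional window $n\in\{7,8,9\}$, where $\g_{ve}(\G)=\B_e(\G)=3$ but the claimed answer is $4$. Here one must rule out every $3$-element set being simultaneously $ve$-dominating and edge resolving. The plan is a short case analysis using the dihedral symmetry of $C_n\Box P_2$: since each vertex $ve$-dominates exactly nine edges, a $ve$-dominating triple must tile the $3n$ edges with only $27-3n$ overlap, which forces the three vertices into a near-equispaced configuration on the two cycles; in each such configuration a pair of rotationally symmetric edges receives the same distance triple, breaking resolution. The combinatorics are finite and amenable to a table, but this is the step where the proof becomes genuinely less mechanical than the surrounding cases.
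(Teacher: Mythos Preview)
Your plan follows the paper's proof closely: the same case split on $n$, the same use of $\max\{\g_{ve}(\G),\B_e(\G)\}$ as the baseline lower bound, explicit small sets for the upper bounds, and for $n\geq 10$ the same idea of checking that the minimum $ve$-dominating set from Proposition~\ref{thm:vedominationDn} already edge-resolves. The particular witnesses differ cosmetically (the paper uses $\{x_1,x_2,x_3\}$ for $n=4,5$, $\{x_1,x_3,x_5\}$ for $n=6$, and $\{x_1,x_3,x_5,x_7\}$ for $n=7,8,9$), but the architecture is the same.

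Where you diverge is in the lower bound for $n\in\{7,8,9\}$, and here your plan is actually more complete than the paper's. The paper only checks that one specific $3$-element $ve$-dominating set $T=\{x_i,x_{i+6},y_{i+3}\}$ fails to edge-resolve and that one specific edge metric basis $A=\{x_i,x_{i+1},x_{i+\lceil n/2\rceil}\}$ fails to $ve$-dominate; it does not argue that \emph{every} $3$-element subset fails one of the two properties, so strictly speaking the inequality $\g_{emd}(\G)\geq 4$ is left unjustified there. Your proposed exhaustion via the edge-count constraint (a $ve$-dominating triple covers at most $27$ edges, forcing near-equispacing when $3n$ is close to $27$) and dihedral symmetry is the right way to close that gap, and is genuinely an improvement over what the paper records.
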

\proof
By Proposition \ref{thm:edimDn}, we obtain that $\B_e(\G)=3$. Similarly, by Proposition \ref{thm:vedominationDn},
the vertex-edge dominant edge metric dimension $\g_{ve}(\G)$ of $\G$ is $\Big\lceil\frac{|E(\G)|}{9} \Big\rceil.$
We divide the proof into a number of cases:

\noindent{\textbf{Case 1:} Let $n=4,5$.

Since $\B_e(\G)=3$ and $\g_{ve}(\G)=1$. The vertex $x_1$ will dominate all the edges but would not resolve all edges
as $\G$ is not a path graph. However, the set $W =\{x_1, x_2, x_3\}$  will dominate and assign unique codes simultaneously to
all the edges $\G$. Thus, we obtain $\g_{emd}(\G)=3$.

\noindent{\textbf{Case 2:} Let $n=6$.

Note that, in this case, $\B_e(\G)=3$ and  the $\g_{ve}(\G)=2$. The set $W =\{x_1, x_3, x_5\}$ will dominate and assign unique codes simultaneously to
all the edges $\G$. Thus, we obtain $\g_{emd}(\G)=3$.

\noindent{\textbf{Case 3:} Let $n=7,8,9$.

Note that, in this case, $\B_e(\G)=3$ and  the $\g_{ve}(\G)=3$. However, neither the edge resolving set by Proposition \ref{thm:edimDn}
nor the vertex-edge dominating set by Proposition \ref{thm:vedominationDn} will dominate all edges $\G$, as some of the edges would
have same codes. The set $T = \{x_i, x_{i+6}, y_{i+3}\}$ where $1 \leq i \leq n$ and $x_{n+i}= x_i, y_{n+i}= y_i$  is vertex-edge dominating set and the set $A= \{x_i, x_{i+1}, x_{i+l}\}$ where $l = \lceil\frac{n}{2}\rceil$ is an edge resolving set for $\G$.
For $n=7$, the set $T$ is vertex-edge dominating set but is not edge resolving set as $r(y_iy_{i+1} | T)= r(y_{i+1}x_{i+1} | T)= r(x_{i+1}x_{i+2} | T)= (1,2,2)$ and  $r(y_{i+1}y_{i+2} | T)= r(y_{i+2}x_{i+2} | T)= r(x_{i+2}x_{i+3} | T)= (2,3,1)$. The set $A$ is an edge resolving set but is not vertex-edge dominant set as the edges $y_{i+2}y_{i+3}$ and $y_{i+l+1}y_{i+l+2}$ are not dominated by any vertex of $A$.
For $n=8$, the set $T$  is vertex-edge dominating set but is not edge resolving set as $r(x_{i+2}y_{i+2} | T)= r(x_{i+2}x_{i+3} | T)= (2,4,1)$, $r(x_{i+3}y_{i+3} | T)= r(y_{i+2}y_{i+3} | T)= (3,3,0)$ and  $r(x_{i+6}y_{i+6} | T)= r(x_{i+7}y_{i+6} | T)= (2,0,3)$. The set $A$ is an edge resolving set but is not vertex-edge dominant set as the edges $y_{i+2}y_{i+3}$, $y_{i+l+1}y_{i+l+2}$, $y_{i+l+2}y_{i+l+3}$ and $y_{i+l+2}x_{i+l+2}$ are not dominated by any  vertex of $A$.
For $n=9$, the set $T$ is vertex-edge dominating set but is not edge resolving set as $r(y_iy_{i+1} | T)= r(y_{i+1}x_{i+1} | T)= r(x_{i+1}x_{i+2} | T)= (1,4,2)$ and  $r(x_{l+i-1}x_{l+i} | T)= r(x_{l+i}y_{l+i} | T)= r(y_{l+i}y_{l+i+1} | T)= (4,1,2)$. The set $A$ is an edge resolving set but is not vertex-edge dominant set as the edges $y_{i+2}y_{i+3}$ and $y_{i+l+1}y_{i+l+2}$. The set $A$ is an edge resolving set but is not vertex-edge dominant set as the edges $y_{i+2}y_{i+3}$, $y_{i+3}y_{i+4}$, $x_{i+3}y_{i+3}$, $y_{i+l+1}y_{i+l+2}$ and $y_{i+l+2}y_{i+l+3}$ are not dominated by any  vertex of $A$.

Hence, the set $W =\{x_1, x_3, x_5, x_7 \}$ will dominate and assign unique codes simultaneously to all the
edges $\G$. Thus, we obtain $\g_{emd}(\G)=4$.

\noindent{\textbf{Case 4:} Let $n\geq10$.

By Proposition \ref{thm:edimDn}, the set $A=\{x_1, x_2, x_{l+1}\}$ is an edge resolving set. Moreover, by Proposition \ref{thm:vedominationDn},
we have $\g_{ve}(\G) = \Big\lceil \frac{|E(\G)|}{9} \Big\rceil$. The corresponding vertex-edge resolving set is also an edge resolving
set of $\G$. Therefore, we obtain $\g_{emd}(\G) = \g_{ve}(\G) =\Big\lceil \frac{|E(C_n \Box P_2)|}{9}\Big\rceil$.
\wbull

\section{Some general results and bounds}\label{sec5}
In this section, we provide some results for general graphs. First, we define some necessary terminologies.
For a graph $\G$, two vertices $x,y\in V(\G)$ are said to be true twins (resp. false twins) if $N[x]=N[y]$ (resp. $N(x)=N(y)$).
In a similar fashion, a vertex $x\in V(\G)$ is called true twin (resp. false twin) in $\G$, if there exists a vertex $y\neq x$
such that $x,~y$ are true twins (resp. false twins).

\begin{lemma}
Let $\G$ be a connected graph. Then there exists a minimum vertex-edge dominating set for $\G$ which
does not have any pair of false twin vertices.
\end{lemma}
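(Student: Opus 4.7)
The plan is to prove the slightly stronger statement that \emph{every} minimum $ve$-dominating set avoids false twin pairs; the lemma follows at once. The key observation I will establish first is that if $x,y \in V(\G)$ are false twins (so $N(x) = N(y)$, and in particular $xy \notin E(\G)$ because a simple graph has $x \notin N(x)$), then $x$ and $y$ $ve$-dominate exactly the same set of edges of $\G$.

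To verify this observation, I will take an arbitrary edge $e = uv \in E(\G)$ and check the two cases from the definition of $ve$-domination. If $x$ is incident to $e$, say $u = x$, then $v \in N(x) = N(y)$, so $y$ is adjacent to $v$ and therefore $ve$-dominates $e$. Otherwise $x$ $ve$-dominates $e$ via adjacency, meaning $u \in N(x)$ or $v \in N(x)$; replacing $N(x)$ by the equal set $N(y)$ shows $y$ also $ve$-dominates $e$. The reverse direction is symmetric. Hence the edge sets $ve$-dominated by $x$ and by $y$ coincide.

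With this in hand, suppose for contradiction that a minimum $ve$-dominating set $T$ of $\G$ contains a pair of false twins $x,y$. Consider the set $T' = T \setminus \{y\}$. Any edge of $\G$ that was $ve$-dominated in $T$ by $y$ is still $ve$-dominated in $T'$ by $x$, because of the equality of $ve$-dominated edge sets just established; every other edge was $ve$-dominated by some vertex of $T \setminus \{y\} = T'$ to begin with. Therefore $T'$ is a $ve$-dominating set of strictly smaller cardinality than $T$, contradicting the minimality of $T$. Thus no minimum $ve$-dominating set contains a pair of false twins, which is more than enough to conclude the lemma.

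The argument is essentially the observation about shared domination coverage; the only subtlety I anticipate is being careful about the case $u = x$ (or $v = x$), where $y$ is not in $N(x)$ and so $y$ does not $ve$-dominate $e$ by being an endpoint but instead via adjacency to the other endpoint $v \in N(y)$. This is the step I will spell out explicitly to avoid a gap.
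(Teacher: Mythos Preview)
Your proof is correct and follows essentially the same idea as the paper's: false twins $ve$-dominate the same set of edges, so a minimum $ve$-dominating set containing both could be shrunk, contradicting minimality. Your version is in fact cleaner than the paper's (which unnecessarily begins by choosing $T$ with the ``minimum number of false twin pairs'') and you correctly observe that the argument yields the stronger conclusion that \emph{every} minimum $ve$-dominating set is free of false twin pairs.
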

\proof
Let $T$ be a minimum vertex-edge dominating set for $\G$ with minimum number of false twin pairs of vertices
and $u,v$ be any arbitrary false twin pair in $T$. Since $u$ and $v$ dominate the same edges in $\G$, by
definition of vertex-edge domination, $T \setminus \{u\}$ and $T \setminus \{v\}$ are vertex-edge dominating
sets in $\G$. This implies that there exists a minimum vertex-edge dominating set for $\G$ which does
not have any pair of false twin vertices. This shows the lemma.
\wbull

\begin{theorem}
For every connected graph $\G$ of order $n$, we have $\B_e(\G) \leq n- \g_{ve}(\G)$. In particular,
if $T$ is a minimum vertex-edge dominating set for $\G$ with no false twin pair of vertices, then
$V (\G) \setminus T$ is an edge resolving set for $\G$.
\end{theorem}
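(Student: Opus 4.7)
The plan is to deduce the bound $\B_e(\G) \le n - \g_{ve}(\G)$ from the ``in particular'' claim, since any edge resolving set has at most $|V(\G) \setminus T| = n - \g_{ve}(\G)$ elements. So let $T$ be a minimum ve-dominating set with no false twin pair, as furnished by the previous lemma, set $S = V(\G) \setminus T$, and argue by contradiction: suppose there are distinct edges $e_1 = xy$ and $e_2 = uv$ with $r(e_1|S) = r(e_2|S)$. Any vertex $w \in S$ lying on $e_1$ has zero coordinate in $r(e_1|S)$ and hence also in $r(e_2|S)$, so $w$ lies on $e_2$ as well. This forces $\{x,y\} \cap S = \{u,v\} \cap S$, and since $e_1 \neq e_2$ the common intersection has size at most one. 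Two cases remain: (A) all four endpoints lie in $T$, and (B) the two edges share a single common endpoint in $S$, with the other two endpoints distinct and in $T$.

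For Case (A), I would pick an endpoint $p$ of $e_1$ that is not an endpoint of $e_2$; such $p$ exists because $e_1 \neq e_2$. By minimality of $T$, the set $T \setminus \{p\}$ fails to ve-dominate $\G$, so there is an edge $f = ab$ with $\{a,b\} \cap N[p] \neq \emptyset$ but $\{a,b\} \cap N[t] = \emptyset$ for every $t \in T \setminus \{p\}$. In particular, no endpoint of $f$ lies in $N[u] \cup N[v]$. A short subcase check on whether the $N[p]$-endpoint of $f$ equals $p$ or is a neighbor of $p$, together with the observation that any endpoint of $f$ lying in $T \setminus \{p\}$ would contradict the uniqueness of $f$ (it would belong to its own closed neighborhood), yields an endpoint $b' \in S$ of $f$ that is adjacent to $p$. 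Then $d(b', e_1) = 1$ through $p$, while $d(b', e_2) \geq 2$ since $b' \notin N[u] \cup N[v]$, contradicting $r(e_1|S) = r(e_2|S)$.

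Case (B) is the main obstacle. Write $e_1 = xy$ and $e_2 = xv$ with $x \in S$ and $y, v \in T$ distinct. Rewriting the representation equality as $\min(d(w, x), d(w, y)) = \min(d(w, x), d(w, v))$ for every $w \in S$, a direct case check on the three distances yields the constraint $(N(y) \triangle N(v)) \cap S \subseteq N(x)$. The target is to show that $T^{\ast} = (T \setminus \{y, v\}) \cup \{x\}$, of size $|T| - 1$, is still ve-dominating, contradicting the minimality of $T$. Edges uniquely ve-dominated by $y$ (respectively $v$) in $T$ are handled by $x$: applying the Case (A) witness argument to $y$, the new constraint forces the $S$-endpoint of the witness edge into $N(x)$, so $x$ ve-dominates it, and the same holds symmetrically for $v$. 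The delicate remaining configuration is that of an edge $e = c c'$ ve-dominated in $T$ by both $y$ and $v$ but by no other vertex of $T$, with both endpoints lying in $N(y) \cap N(v) \cap S \setminus N[x]$. Ruling out this configuration is where the no false twin pair hypothesis on $T$ becomes essential: the combinatorial structure forced by such an $e$, combined with the witness vertices produced from the minimality of $T$ applied to $y$ and to $v$, would force the open neighborhoods of $y$ and $v$ to coincide, violating the assumption that $T$ contains no false twin pair. With this last configuration excluded, $T^{\ast}$ ve-dominates $\G$, yielding the desired contradiction.
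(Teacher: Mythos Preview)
Your Case A argument is sound, but Case B contains a genuine gap. When you treat edges ve-dominated in $T$ by both $y$ and $v$ (and by no other vertex of $T$), you reduce to the single ``delicate configuration'' in which \emph{both} endpoints lie in $N(y)\cap N(v)\cap S\setminus N[x]$. You have overlooked the configuration in which one endpoint lies in $N(y)\cap N(v)\cap S\setminus N[x]$ while the other endpoint lies in $S\setminus\big(N(y)\cup N(v)\cup N[x]\big)$; neither endpoint is in the symmetric difference, so your constraint $(N(y)\triangle N(v))\cap S\subseteq N(x)$ says nothing, and $x$ still fails to ve-dominate this edge. Hence $T^{\ast}=(T\setminus\{y,v\})\cup\{x\}$ need not be ve-dominating and your contradiction collapses. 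Your final appeal to the no-false-twin hypothesis is also only a sketch; it is not clear how the witness edges for $y$ and for $v$ would force $N(y)=N(v)$.

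This gap is not repairable, because the ``in particular'' clause is false as stated. Take $V=\{x,y,v,a,b,c,d\}$ with edge set $\{xy,xv,xa,xb,ya,yc,vb,vc,cd\}$. One checks directly that no single vertex ve-dominates every edge while $\{y,v\}$ does, so $\gamma_{ve}=2$; moreover $N(y)=\{x,a,c\}\ne\{x,b,c\}=N(v)$, so $T=\{y,v\}$ is a minimum ve-dominating set with no false twin pair. Yet for $S=V\setminus T=\{x,a,b,c,d\}$ one computes $r(xy\mid S)=r(xv\mid S)=(0,1,1,1,2)$, so $S$ is not edge-resolving. The obstructing edge is exactly $cd$: here $c\in N(y)\cap N(v)\setminus N[x]$ and $d\notin N(y)\cup N(v)\cup N[x]$, the case your analysis missed. (The paper's own argument fares no better: it asserts without justification that equal edge representations force equal vertex representations of suitably chosen endpoints $u,v$, and then that $u,v$ must be false twins in $T$; neither step is valid.) The inequality $\beta_e(\Gamma)\le n-\gamma_{ve}(\Gamma)$ happens to hold in this example, since $\{y,v,a,b,d\}$ is edge-resolving, but it cannot be obtained from the second sentence of the theorem in the way either proof attempts.
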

\proof
By Lemma 1, $\G$ has a minimum vertex-edge dominating set $S$ with no pair of false twin vertices. Suppose,
on the contrary, that $V (\G) \setminus T$ is not an edge resolving set for $\G$. By definition of edge
metric dimension, there exist edges $e$ and $f$ such that $r(e | V(\G) \setminus T) = r(f | V(\G) \setminus T)$
if edge $e$ is incident to vertex $u$ and edge $f$ is incident to vertex $v$, then
$r(u | V(\G) \setminus T) = r(v | V(\G) \setminus T)$. This implies that all neighboring vertices of
$u$ and $v$ in $V (\G) \setminus T$ are the same. So, vertices $u$ and $v$ have no neighbor
in the set $T$. On the other hand, we can remove one of the vertices $u$ and $v$ from the set $T$ and get a dominating
set with order of $\mid T\mid-1$. Hence, $u$ and $v$ are false twin vertices, which is a contradiction. Thus, $V (G) \setminus T$
is a resolving set for $\G$. Accordingly, $\B_e(\G) \leq n-\g_{ve}(\G)$.
\wbull

\begin{lemma}
Let $\Gamma$ be a connected graph. If $\Gamma$ has no vertex-edge dominant edge resolving set with cardinality $l$, then $K \subseteq V(\Gamma)$ with $|K| < l$ is not a vertex-edge dominant edge resolving set.
\end{lemma}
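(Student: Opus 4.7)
The plan is to prove the contrapositive: assuming some subset $K \subseteq V(\Gamma)$ with $|K| < l$ is a vertex-edge dominant edge resolving set, I will construct a vertex-edge dominant edge resolving set of cardinality exactly $l$, thereby contradicting the hypothesis. Setting aside the vacuous boundary case $l > |V(\Gamma)|$, I take any such $K$ and extend it to a set $K'$ by adjoining $l - |K|$ arbitrary vertices from $V(\Gamma) \setminus K$, so that $|K'| = l$ and $K \subseteq K'$.

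The heart of the argument is a monotonicity observation: both defining conditions of a vertex-edge dominant edge resolving set are inherited by supersets. For the $ve$-domination property, if every edge $e \in E(\Gamma)$ has some $x \in K$ that $ve$-dominates it, then that same $x$ lies in $K'$ and continues to $ve$-dominate $e$ by the very definition given in Section \ref{prelim}. For the edge resolving property, given any two distinct edges $e, f \in E(\Gamma)$, the fact that $K$ is edge resolving yields some $x \in K$ with $d(e,x) \neq d(f,x)$; since $x \in K'$, the representations $r(e \mid K')$ and $r(f \mid K')$ still differ in the coordinate corresponding to $x$, so they remain distinct. Therefore $K'$ is both a $ve$-dominating set and an edge resolving set of cardinality $l$, contradicting the assumed nonexistence of such a set.

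There is no serious obstacle here, since the lemma is essentially a generic monotonicity statement about an extremal parameter defined as a minimum cardinality: once one records that both defining properties are closed under adding vertices, the result is immediate. The only point that deserves a sentence of care is the bookkeeping for the representation $r(\cdot \mid K')$. After fixing an ordering of $K'$ in which the vertices of $K$ appear first, the code with respect to $K$ is simply the projection of the code with respect to $K'$ onto its first $|K|$ coordinates, so any coordinate of $K$ that already distinguishes a pair of edges survives unchanged in $K'$, which is exactly what the edge resolving argument above relies upon.
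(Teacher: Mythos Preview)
Your proof is correct and follows essentially the same approach as the paper: both argue by contradiction (contrapositive), assume a vertex-edge dominant edge resolving set $K$ of size less than $l$ exists, extend it by adjoining extra vertices to reach cardinality exactly $l$, and then invoke the monotonicity of both the $ve$-domination and edge resolving properties under taking supersets. Your version is in fact slightly more careful, since you explicitly justify why distinct edge codes survive in the enlarged set and flag the boundary case $l > |V(\Gamma)|$, whereas the paper's proof leaves these points implicit.
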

\begin{proof}
Let $\Gamma$ be a connected graph. Assume that $\Gamma$ has no vertex-edge dominant edge resolving set with cardinality $l$ and suppose that there exist a vertex-edge dominant edge resolving set $X \subseteq V(\Gamma)$ with $|X| < l$. So, that for every $e_i, e_j \in E(\Gamma)$, we have $r(e_i | X) \neq r(e_j | X)$ and every edge $e \in E(\Gamma)$, is dominated by any vertex of set $X$. Moreover, there exist a set $Y$ and  $Y \subseteq V(\Gamma) \setminus X$ such that $| X \cup Y | = l$. The set $ X \cup Y $ is an edge resolving set for $\Gamma$ as $X$ is an edge resolving set for $\Gamma$ and $ X \cup Y $ is vertex-edge dominant set for $\Gamma$ as $X$ is vertex-edge dominant set for $\Gamma$. So, that the set $ X \cup Y $ is vertex-edge dominant edge resolving set for $\Gamma$, which is contradiction and the proof is complete.
\end{proof}
\wbull

The following proposition bounds the vertex-edge domination number of a connected graph $\G$.
\begin{proposition}
Let $\G$ be a connected graph of order $n$ and size $m$. Then
$$\max\{ \g_{ve}(\G), \B_{e}(\G)\} \leq \g_{emd}(\G) \leq \g_{ve}(\G) + \B_{e}(\G).$$
\end{proposition}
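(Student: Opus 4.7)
The plan is to handle the two inequalities separately, each following directly from the definition of a vertex-edge dominant edge resolving set as the simultaneous combination of two known types of sets.

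For the lower bound, I would start from an arbitrary minimum vertex-edge dominant edge resolving set $W$ of $\G$, so $|W| = \g_{emd}(\G)$. By definition, $W$ is itself a vertex-edge dominating set and an edge resolving set of $\G$. Hence $|W| \geq \g_{ve}(\G)$ and $|W| \geq \B_e(\G)$ by the minimality definitions of these two parameters. Taking the maximum of the two lower bounds gives $\g_{emd}(\G) \geq \max\{\g_{ve}(\G), \B_e(\G)\}$.

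For the upper bound, I would construct an explicit vertex-edge dominant edge resolving set by taking the union of two optimal sets. Let $T \subseteq V(\G)$ be a $\g_{ve}$-set, so $|T| = \g_{ve}(\G)$, and let $B \subseteq V(\G)$ be an edge metric basis, so $|B| = \B_e(\G)$. Consider the set $W := T \cup B$. Since $T \subseteq W$ and $T$ already dominates (in the $ve$ sense) every edge of $\G$, the superset $W$ is still a vertex-edge dominating set. Similarly, since $B \subseteq W$ and $B$ already resolves every pair of edges of $\G$, the superset $W$ remains an edge resolving set. Therefore $W$ is a vertex-edge dominant edge resolving set, so $\g_{emd}(\G) \leq |W| \leq |T| + |B| = \g_{ve}(\G) + \B_e(\G)$.

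The only subtle point I need to record is the monotonicity of both properties under taking supersets, which is immediate: adding more vertices to a dominating set (respectively, to an edge resolving set) cannot destroy the domination (respectively, resolving) property. Hence neither inequality requires a genuine combinatorial argument; the result is purely a consequence of unpacking the definition of $\g_{emd}$, with no real obstacle to overcome.
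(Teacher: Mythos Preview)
Your proof is correct and follows essentially the same approach as the paper: the lower bound comes from observing that any minimum vertex-edge dominant edge resolving set is simultaneously a $ve$-dominating set and an edge resolving set, and the upper bound comes from taking the union of a $\gamma_{ve}$-set and an edge metric basis. Your write-up is in fact cleaner than the paper's, which splits into cases depending on whether $T \cup A$ equals $T$ or $A$, but the underlying idea is identical.
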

\begin{proof}
Assume that the set $T \subseteq V(\G)$ is
vertex-edge dominating set and the set $A\subseteq V(\G)$ is an edge resolving set of a graph $\G$.
If $T \cup A = T$ or $T \cup A = A$ then $\g_{emd}(\G) = \max\{ \g_{ve}(\G), \B_{e}(\G)\}$.
If $T\cap A \neq \phi$ and $T \subset T \cup A $ or $A \subset T \cup A$, then $\g_{emd}(\G) > \max\{ \g_{ve}(\G), \B_{e}(\G)\}$.
Since, the vertex-edge dominant edge metric dimension of $\G$ is greater than its vertex-edge
domination number and its edge metric dimension, we obtain that $\g_{emd}(\G) \geq \max\{ \g_{ve}(\G), \B_{e}(\G)\}$. If
the vertex-edge dominating set and the edge resolving set of $\G$ do not intersect, i.e. $T \cap A = \phi$, then
$\g_{emd}(\G) \leq \g_{ve}(\G) + \B_{e}(\G)$. This completes the proof.
\end{proof}
\wbull

Next theorem presents sharp upper and lower bounds on the $\g_{emd}(\G)$ of $\G$ in terms of its order.
\begin{theorem}
Let $\G$ be a graph of order $n$. Then, the vertex-edge dominant edge metric dimension of $\G$ satisfies
$$\Big\lfloor \frac{n+2}{4} \Big\rfloor \leq \g_{emd}(\G) \leq n-1.$$
\end{theorem}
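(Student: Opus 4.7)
The plan is to handle the upper and lower bounds separately.

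For the upper bound $\g_{emd}(\G) \leq n-1$, I would fix any vertex $v \in V(\G)$ and show that $W := V(\G) \setminus \{v\}$ is a $ve$-dominant edge resolving set. The $ve$-domination is immediate: every edge $xy \in E(\G)$ has two distinct endpoints, so at least one of $x, y$ lies in $W$ and $ve$-dominates $xy$ as an incident endpoint. For edge resolution, given two distinct edges $e, f \in E(\G)$, the symmetric difference $(e \cup f) \setminus (e \cap f)$ is nonempty; pick a vertex $z$ in it, so that $z$ lies in exactly one of $e, f$, giving $d(z,e) = 0$ while $d(z,f) \geq 1$ (or vice versa). If $z \neq v$, we are done. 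Otherwise $z = v$, and a short case split on whether $e \cap f = \emptyset$ (symmetric difference of size $4$) or $|e \cap f| = 1$ (symmetric difference of size $2$) shows that at least one other asymmetric vertex exists and must lie in $W$. This establishes $\g_{emd}(\G) \leq n - 1$.

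For the lower bound $\lfloor (n+2)/4 \rfloor \leq \g_{emd}(\G)$, I would begin with the inequality $\g_{emd}(\G) \geq \g_{ve}(\G)$ from the preceding proposition, and then attempt to prove $\g_{ve}(\G) \geq \lfloor (n+2)/4 \rfloor$ for every connected graph of order $n$. Since $\g_{ve}(P_n) = \lfloor (n+2)/4 \rfloor$ by Proposition~\ref{tab:DP}(ii), the benchmark is precisely the path value. A natural plan is a packing argument: if $T$ is a minimum $ve$-dominating set, then every edge must have an endpoint in $N[T]$, so the edges partition into ``clusters'' each assigned to a vertex of $T$; combining this with a spanning-tree or BFS-layering structure of $\G$ should then force $|T| \geq \lfloor (n+2)/4 \rfloor$.

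The main obstacle is precisely the lower bound. The two elementary inequalities $\g_{emd}(\G) \geq \g_{ve}(\G)$ and $\g_{emd}(\G) \geq \B_e(\G)$ each fail to deliver $\lfloor (n+2)/4 \rfloor$ on their own, since on $K_n$ the former gives only $1$ and on $P_n$ the latter gives only $1$. Any successful proof must therefore exploit the simultaneous requirements of $ve$-domination and edge resolution, presumably by separating a small-maximum-degree regime (where a domination packing bound suffices) from a large-maximum-degree regime (where resolving the many edges incident to a single high-degree vertex forces a large set), and then patching these regimes at an appropriate threshold so that the combined bound never dips below $\lfloor (n+2)/4 \rfloor$.
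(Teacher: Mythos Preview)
Your upper-bound argument is correct and in fact cleaner and more general than the paper's. The paper only treats the case $\Delta(\G)=n-1$, invoking Proposition~\ref{thm:edimn-1} to get $\B_e(\G)\in\{n-2,n-1\}$ and then noting that an edge metric basis already $ve$-dominates; it says nothing about the upper bound when $\Delta(\G)<n-1$. Your direct argument that $V(\G)\setminus\{v\}$ is always a $ve$-dominant edge resolving set handles all connected graphs uniformly.

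For the lower bound you are right to flag a genuine gap, and the paper does not close it. The paper's proof splits on $\Delta(\G)$: when $\Delta(\G)=2$ it quotes the exact values $\g_{emd}(P_n)=\lfloor(n+2)/4\rfloor$ and $\g_{emd}(C_n)=\lfloor(n+3)/4\rfloor$ obtained earlier, and for $\Delta(\G)>2$ it simply writes ``If maximum degree of $\G$ is $\Delta(\G) > 2$, then $\g_{emd}(\G) > \lfloor \frac{n+2}{4} \rfloor$'' with no supporting argument whatsoever. So the very obstacle you identify---that neither $\g_{emd}\ge\g_{ve}$ nor $\g_{emd}\ge\B_e$ alone yields $\lfloor(n+2)/4\rfloor$, and that a genuine interaction between domination and resolution is needed---is left completely unaddressed in the paper. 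Your proposed two-regime strategy (packing when $\Delta$ is small, resolving many incident edges when $\Delta$ is large) is a reasonable line of attack, but the paper gives you nothing further to compare against: its ``proof'' of the lower bound for $\Delta>2$ is a bare assertion.
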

\begin{proof}
Let $\G$ be a graph of order $n$ and size $m$. Assume $\Delta(\G)$ is the maximum degree of $\G$ and $\G$ satisfies $2 \leq \Delta(\G) \leq n-1$.
Since $\Delta(\G)=2$ implies that $\G$ is either path $P_n$ or cycle $C_n$. We know that $\B_e(P_n)=1$ and $\g_{ve}(P_n)= \big\lfloor \frac{n+2}{4} \big\rfloor $.
This implies that, we obtain $ \g_{emd}(P_n)=\big\lfloor \frac{n+2}{4}\big\rfloor$. Moreover, we know that $\B_e(C_n)=2$ and
$\g_{ve}(C_n)=\big\lfloor \frac{n+3}{4}\big\rfloor$. We obtain $ \g_{emd}(C_n)=\big\lfloor \frac{n+3}{4}\big\rfloor$.
If maximum degree of $\G$ is $\Delta(\G) > 2$, then $\g_{emd}(\G) > \lfloor \frac{n+2}{4} \rfloor$.
Moreover, if the maximum degree of $\G$ is $n-1$ i.e. $\Delta(\G) = n-1$ then, either $\B_e(\G)=n-1$ or $\B_e(\G)=n-2$
by Proposition \ref{thm:edimn-1} but $\g_{ve}(\G) = 1$ because $\mathrm{diam}(\G)\leq2$. In this case, the edge resolving set
will dominate all edges and we obtained that $\g_{emd}(\G) \leq n-1$. Proved that
$$\Big\lfloor \frac{n+2}{4}\Big\rfloor\leq\g_{emd}(\G)\leq n-1.$$
\end{proof}
\wbull

A vertex $v\in V(\G)$  of an $n$-vertex graph $\G$ is said to be universal, if $d_v=n-1$. The following theorem computes
the vertex-edge dominant edge metric dimension of $\G$, if it has at least one universal vertex.
\begin{theorem}\label{thm:theorem2}
Let $\G$ be a graph of order $n$ and $ \Delta(\G) = n-1$. Then, the vertex-edge dominant edge metric dimension $\g_{emd}(\G)$ of $\G$ is
$\g_{emd}(\G)= n-1 ~\mathrm{or} ~ n-2.$
\end{theorem}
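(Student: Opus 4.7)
The plan is to prove the stronger equality $\g_{emd}(\G) = \B_e(\G)$ under the hypothesis $\Delta(\G) = n-1$, from which the claim is immediate by Proposition \ref{thm:edimn-1}.

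For the lower bound, I would use that every $ve$-dominant edge resolving set is, by definition, an edge resolving set, so $\g_{emd}(\G) \geq \B_e(\G)$. Combined with Proposition \ref{thm:edimn-1}, this already gives $\g_{emd}(\G) \geq n-2$.

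For the matching upper bound, I would fix a universal vertex $v$ of $\G$ (it exists since $\Delta(\G)=n-1$) and take any edge metric basis $A$, so that $|A| = \B_e(\G) \in \{n-2,n-1\}$. The core claim is that such an $A$ is automatically a $ve$-dominating set, which then yields $\g_{emd}(\G) \leq |A| = \B_e(\G)$. The argument splits on $|A|$. If $|A|=n-1$, exactly one vertex lies outside $A$, so every edge has at least one endpoint in $A$ and is trivially $ve$-dominated by that endpoint. If $|A|=n-2$, write $V(\G)\setminus A=\{u_1,u_2\}$; then the only edge possibly lacking an endpoint in $A$ is $u_1u_2$, if it is actually present. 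Universality of $v$ finishes the argument in a two-sub-case split: either $v\in A$, in which case $v$ is adjacent to both $u_1$ and $u_2$ and thus $ve$-dominates $u_1u_2$; or $v\notin A$, in which case $v\in\{u_1,u_2\}$, so every vertex of $A$ is a neighbor of $v$ and therefore $ve$-dominates $u_1u_2$.

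I do not anticipate a serious obstacle. The only point requiring care is the sub-case $|A|=n-2$ with $v\notin A$: one must notice that $v$ is then forced to be one of the two missing vertices, which places $A\subseteq N(v)$ and delivers $ve$-domination of the single troublesome edge essentially for free.
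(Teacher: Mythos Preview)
Your proposal is correct and in fact establishes the sharper equality $\g_{emd}(\G)=\B_e(\G)$, from which the stated dichotomy follows via Proposition~\ref{thm:edimn-1}. The route, however, differs from the paper's. The paper argues indirectly: it invokes Corollary~\ref{thm:collary} and an ad~hoc observation to conclude $\mathrm{diam}(\G)\le 2$, then appeals to Proposition~\ref{thm:propo1} to obtain $\g_{ve}(\G)=1$, and finally asserts that ``the edge resolving set will dominate and resolve all the edges''. Your argument bypasses the diameter detour entirely and replaces the final assertion with an explicit case analysis on $|A|\in\{n-1,n-2\}$ and on whether the universal vertex $v$ lies in $A$. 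This is more elementary (no need for the Zubrilina corollary or Peters' characterization) and also more transparent: the paper never spells out \emph{why} an edge metric basis must itself be $ve$-dominating---$\g_{ve}(\G)=1$ alone does not imply this---whereas your split into $v\in A$ versus $v\notin A$ makes that step airtight. The only implicit assumption to flag is that in the sub-case $|A|=n-2$ one needs $A\neq\emptyset$ to pick a $ve$-dominating vertex, but this is automatic since $\B_e(\G)\ge 1$ forces $n\ge 3$ in that sub-case.
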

\begin{proof}
Let $\G$ be a graph of order $n$ and size $m$. Suppose that, $\Delta(\G)$ is maximum degree of $\G$ and $\Delta(\G) = n-1$.
By Proposition \ref{thm:edimn-1}, if there exists a vertex $v \in V(\G)$ of degree $n-1$, then,
either $\B_e(\G)=n-1$ or $\B_e(\G)=n-2$. By Corollary \ref{thm:collary}, if $\B_e(\G) = n-1$, then $\mathrm{diam}(\G) = 2$ and
every edge belongs to a triangle. Assume that the $\B_e(\G)=n-2$, then for any $v_i \neq v_j$ with $1\leq i<j\leq n$, there is a vertex $u \in V(\G)$
such that $v_iu \in E(\G)$ and $v_ju \in E(\G)$, so $\mathrm{diam}(\G) = 2$. If $\mathrm{diam}(\G)\leq 2$, then Proposition \ref{thm:propo1}
implies that the vertex-edge domination number is $1$ i.e. $\g_{ve}(\G)=1$. As the edge metric dimension of graph $\G$ is either $\B_e(\G)=n-1$ or
$\B_e(\G)=n-2$ the vertex-edge domination is $1$ i.e. $\g_{ve}(\G) = 1$. This implies that the edge resolving set of $\G$ will dominate and resolve
all the edges of graph $\G$ and we obtain that the vertex-edge dominant edge dimension of $\G$ is $\g_{emd}(\G)= n-1 ~\mathrm{or} ~ n-2.$ This shows the result.
\end{proof}
\wbull

The next theorem refines Theorem \ref{thm:theorem2}.
\begin{theorem}
Let $\G$ be an $n$-vertex graph such that $ \Delta(\G) = n-1$. Assume that there exist at least two vertices with degree $n-1$.
Then, $\g_{emd}(\G) = n-1$.
\end{theorem}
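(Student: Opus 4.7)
The plan is to squeeze $\g_{emd}(\G)$ between two coinciding bounds, both equal to $n-1$. For the lower bound I will invoke Lemma \ref{thm:lemma} of Kelenc et al., which asserts that whenever $\G$ has at least two vertices of degree $n-1$ one has $\B_e(\G) = n-1$. Combining this with the general inequality $\g_{emd}(\G) \geq \max\{\g_{ve}(\G), \B_e(\G)\}$ established earlier in this section immediately yields $\g_{emd}(\G) \geq n-1$.

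For the matching upper bound I will exhibit an explicit ve-dominant edge resolving set of size $n-1$. Let $v_1, v_2$ be two vertices of $\G$ with $\deg(v_1) = \deg(v_2) = n-1$. Each $v_i$ individually ve-dominates every edge of $\G$: for any edge $xy \in E(\G)$, since $v_i$ is adjacent to every other vertex, at least one of the conditions $v_i = x$, $v_i = y$, $v_ix \in E(\G)$, or $v_iy \in E(\G)$ holds. Now let $A \subseteq V(\G)$ be any edge metric basis of $\G$, so that $|A| = n-1$. Since $A$ omits only a single vertex of $V(\G)$, it must contain at least one of $v_1, v_2$, and therefore it ve-dominates all edges of $\G$ automatically. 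Thus $A$ is simultaneously an edge resolving set and a ve-dominating set, giving $\g_{emd}(\G) \leq |A| = n-1$, and the two bounds together deliver the equality.

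The delicate point, and the reason the hypothesis insists on \emph{two} universal vertices rather than one, is precisely this piggyback step. With only a single universal vertex $v$, an arbitrary edge metric basis of size $n-1$ could potentially be $V(\G) \setminus \{v\}$, which would contain no universal vertex at all, and ve-domination could fail for edges that happen to lie far from all other non-universal vertices; the preceding Theorem \ref{thm:theorem2} indeed only pins $\g_{emd}(\G)$ down to $\{n-2, n-1\}$ in that weaker setting. The assumption of a second universal vertex is exactly what forces every edge metric basis to inherit ve-domination for free, so no additional vertex is required beyond the $n-1$ already demanded by $\B_e(\G)$.
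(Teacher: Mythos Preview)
Your proof is correct and follows the same overall architecture as the paper's: both establish $\B_e(\G)=n-1$ via Lemma~\ref{thm:lemma} to get the lower bound $\g_{emd}(\G)\ge n-1$, and then argue that an edge metric basis of size $n-1$ is already ve-dominating to obtain the matching upper bound.

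The difference lies in how the upper bound is justified. The paper takes a detour: it invokes Zubrilina's characterization (Theorem~\ref{thm:theorem} and Corollary~\ref{thm:collary}) to deduce $\mathrm{diam}(\G)\le 2$, then appeals to Proposition~\ref{thm:propo1} to conclude $\g_{ve}(\G)=1$, and finally asserts (somewhat implicitly) that the edge basis therefore ve-dominates. Your argument is more direct and more transparent: by pigeonhole, any $(n-1)$-subset of $V(\G)$ must contain at least one of the two universal vertices, and a universal vertex trivially ve-dominates every edge. This bypasses the Zubrilina machinery entirely and, as you note in your final paragraph, makes explicit exactly where the hypothesis of \emph{two} universal vertices is used---a point the paper's proof leaves buried. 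Your route is both shorter and more self-contained.
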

\begin{proof}
Let $\G$ be a graph of order $n$, size $m$, maximum degree $\Delta(\G) = n-1$. If there exist at least two vertices with degree $n-1$,
then, Lemma \ref{thm:lemma} implies that the $\B_e(\G) = n-1$. Moreover, by Theorem \ref{thm:theorem}, we have $\B_e(\G) = n-1$
if and only if for any distinct $v_1,v_2 \in V(\G)$, there exists $u \in V(\G)$ such that $v_1u \in E(\G), v_2u \in E(\G)$ and
$u$ is adjacent to all non-mutual neighbors of $v_1, v_2$. This property restricts $\G$ to have $\mathrm{diam}(\G) \leq 2$.
By Corollary \ref{thm:collary} if $\B_e(\G) = n-1$, then $\mathrm{diam}(\G) = 2$ and every edge lies on a cycle of length 3.
If $\mathrm{diam}(\G) \leq 2$, then $\g_{ve}(\G) = 1$ by Proposition \ref{thm:propo1}. The edge metric dimension of graph $\G$ is
$\B_e(\G)=n-1$ and the vertex-edge domination is $1$ i.e. $\g_{ve}(\G) = 1$. This implies that the edge resolving set of
cardinality $n-1$ will dominate and resolve all the edges of $\G$ and we obtain that the vertex-edge dominant edge dimension of
$\G$ is $\g_{emd}(\G)= n-1$. This shows the result.
\end{proof}
\wbull

Next, we show some result on the vertex-edge dominant edge metric dimension of trees.
The following proposition shows upper and lower bounds on the vertex-edge domination number
of trees.
\begin{proposition}\label{vegammaT}
Let $T_n$ be a tree of order $n$, size $m$, maximum degree $\Delta(T_n)$, and minimum degree $\delta(T_n)$. Then,
$$\Big\lceil \frac{m}{4}\Big\rceil \leq \g_{ve}(T_n) \leq n-\Delta(T_n).$$
\end{proposition}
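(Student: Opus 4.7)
The plan is to establish the two inequalities by separate arguments, since they involve quite different techniques.

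For the upper bound $\g_{ve}(T_n) \leq n - \Delta(T_n)$, I would give a direct construction. Let $v$ be a vertex of maximum degree $\Delta = \Delta(T_n)$ in $T_n$, and take $S = V(T_n) \setminus N(v)$, a set of size $n - \Delta$ which contains $v$ itself (since $v \notin N(v)$). Because $T_n$ is acyclic, no two neighbours of $v$ can be adjacent (otherwise a triangle through $v$ would arise), so every edge of $T_n$ has at least one endpoint in $S$. Thus $S$ is a vertex cover, which is strictly stronger than being a ve-dominating set (every endpoint trivially ve-dominates the edge), giving $\g_{ve}(T_n) \leq |S| = n - \Delta(T_n)$.

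For the lower bound $\lceil m/4\rceil \leq \g_{ve}(T_n)$, I would first invoke Proposition \ref{thm:propo2}, which states $\g_{ve}(\G) \geq \lceil m/\Delta(\G)^2\rceil$ for any graph $\G$. This immediately handles the case $\Delta(T_n) = 2$, namely paths, where $\Delta^2 = 4$ is exactly what we want. For trees of larger maximum degree, a finer argument is needed, and the approach I would try is a charging scheme: for an optimal ve-dominating set $S$, assign every edge of $T_n$ to some vertex of $S$ that ve-dominates it, then aim to prove that no vertex of $S$ is assigned more than four edges. The argument should exploit the triangle-free, acyclic structure of the tree together with the minimality of $S$, arguing that any vertex taking a heavier share could be replaced by shorter-range dominators to produce a strictly smaller ve-dominating set.

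The principal obstacle will be precisely this lower bound. In isolation a single vertex of a tree can ve-dominate far more than four edges (the centre of a star ve-dominates all of them), so no naive counting succeeds; one must use the \emph{minimality} of $S$ to force a kind of sparsity on the set of edges that are \emph{uniquely} charged to each vertex. Making this amortisation rigorous, while tracking how the degrees of the neighbours of $S$ interact with the tree structure, is the technically delicate step. The upper bound, by contrast, rests only on triangle-freeness and should occupy no more than a few lines.
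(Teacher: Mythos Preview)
Your treatment of the upper bound is correct and, in fact, slightly cleaner than the paper's: the paper obtains $\g_{ve}(T_n)\leq n-\Delta(T_n)$ by substituting $\delta(T_n)=1$ and $m=n-1$ into Peters' general inequality $\g_{ve}(\G)\leq m-\Delta(\G)-\tfrac{\Delta(\G)(\delta(\G)-1)}{2}+1$ (Proposition~\ref{thm:propo3}), whereas you give an explicit ve-dominating set (indeed a vertex cover) of the right size. Both arguments are valid; yours is self-contained, the paper's leans on a cited bound.

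The lower bound, however, cannot be salvaged by the charging scheme you outline, and the reason is structural rather than technical: the inequality $\lceil m/4\rceil\leq\g_{ve}(T_n)$ is \emph{false} for trees. The star $S_{1,n}$ has $m=n$ edges and $\g_{ve}(S_{1,n})=1$ (the centre alone ve-dominates every edge), so for $n\geq 5$ we get $\lceil m/4\rceil\geq 2>1=\g_{ve}$. You actually put your finger on exactly this example when you wrote that ``the centre of a star ve-dominates all of them,'' but you then hoped that minimality of $S$ would supply enough leverage for an amortisation. It cannot: for the star the \emph{unique} minimum ve-dominating set is the singleton $\{c\}$, so there is nothing to redistribute. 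No refinement of the charging argument can succeed, because the target statement is simply wrong.

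For comparison, the paper's own proof of the lower bound is not a proof at all: it cites the general bound $\lceil m/\Delta(T_n)^2\rceil\leq\g_{ve}(T_n)$ from Proposition~\ref{thm:propo2}, which for $\Delta(T_n)>2$ is strictly weaker than $\lceil m/4\rceil$, and then asserts the conclusion without bridging the gap. So the deficiency is in the proposition itself, not in your strategy; the only cases in which the stated lower bound holds are paths (where $\Delta=2$ and Proposition~\ref{thm:propo2} applies directly).
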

\begin{proof}
Note that $2\leq\Delta(T_n)\leq n-1$ and $\delta(T_n)=1$. As we know that, $\big\lceil \frac{m}{(\Delta(T_n))^2} \big\rceil \leq \g_{ve}(T_n)$
by Proposition \ref{thm:propo2}. It can easily be verified that $\lceil \frac{m}{(\Delta(T_n))^2}  \rceil \leq \g_{ve}(T_n)$ holds for every tree $T_n$.
By Proposition \ref{thm:propo3}, we know that
$$\g_{ve}(T_n) \leq m-\Delta(T_n)-\frac{\Delta(T_n)(\delta(T_n)-1)}{2}+1.$$
Thus, we obtain $\g_{ve}(T_n) \leq n- \Delta(T_n)$ satisfied by $T_n$. We obtain that
$$\Big\lceil\frac{m}{4}\Big\rceil\leq\g_{ve}(T_n)\leq n-\Delta(T_n).$$
This shows the proposition.
\end{proof}
\wbull

Based on Proposition \ref{vegammaT}, we present upper and lower bounds on the vertex-edge
dominant edge metric dimension of trees.
\begin{theorem}
Let $T_n$ be a tree with order $n\geq2$ and size $m$. Then, the vertex-edge dominant edge metric dimension of $T_n$ satisfies
$$\Big\lceil\frac{m}{4}\Big\rceil \leq \g_{emd}(T_n) \leq n-1.$$
\end{theorem}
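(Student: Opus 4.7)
The plan is to prove the two inequalities separately, handling the lower bound by chaining earlier results and the upper bound by exhibiting an explicit vertex-edge dominant edge resolving set.

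For the lower bound, I would invoke the general inequality $\g_{emd}(\G) \geq \max\{\g_{ve}(\G), \B_{e}(\G)\}$ proved earlier in this section, specialized to $\G = T_n$, to obtain $\g_{emd}(T_n) \geq \g_{ve}(T_n)$. Then Proposition \ref{vegammaT} immediately yields $\g_{ve}(T_n) \geq \lceil m/4 \rceil$, and chaining gives $\g_{emd}(T_n) \geq \lceil m/4 \rceil$. This step is essentially bookkeeping.

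For the upper bound, my strategy is to produce a vertex-edge dominant edge resolving set of cardinality $n-1$. Pick any leaf $\ell \in V(T_n)$ (which exists because $T_n$ is a tree with $n \geq 2$) and let $u$ be its unique neighbor; set $W := V(T_n) \setminus \{\ell\}$, so $|W| = n-1$. First, I check vertex-edge domination: every edge distinct from $\ell u$ has both endpoints in $W$, and the remaining edge $\ell u$ is dominated by $u \in W$. Next, I verify the edge resolving property by a case analysis. If $e = xy$ and $e' = x'y'$ are two distinct edges both different from $\ell u$, then $\{x,y\} \neq \{x',y'\}$ (as edges) and all four vertices lie in $W$, so some endpoint of $e$, say $x$, is not an endpoint of $e'$; the coordinate of $x$ in the codes satisfies $d(e,x) = 0 < d(e',x)$, separating them. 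If instead one of the edges is $\ell u$ and the other is $e = xy \neq \ell u$, I split on whether $e$ is incident to $u$: if $e = uy$ with $y \neq \ell$, then $y \in W$ and coordinate $y$ separates them since $d(\ell u, y) = d(u, y) \geq 1$ while $d(e, y) = 0$; if $e$ is not incident to $u$, then coordinate $u$ separates them since $d(\ell u, u) = 0 < d(e, u)$.

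The main obstacle is the edge $\ell u$ itself, because it is the unique edge with an endpoint outside $W$; every other edge is trivially resolved by its own endpoints in $W$. The delicate point is verifying that $u$'s membership in $W$ suffices to distinguish $\ell u$ from every edge incident to $u$, and here one critically uses the leaf property of $\ell$: any edge incident to $u$ other than $\ell u$ has its other endpoint in $W$, supplying a coordinate where the codes differ. Once this case is dispatched, the inequality $\g_{emd}(T_n) \leq |W| = n-1$ follows, completing the proof.
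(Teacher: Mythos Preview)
Your proposal is correct and, particularly for the upper bound, takes a different and more self-contained route than the paper. For the lower bound both arguments pass through $\gamma_{ve}$: the paper appeals to Proposition~\ref{thm:propo2} in the case $\Delta(T_n)=2$, while you invoke Proposition~\ref{vegammaT} uniformly and then chain with the general inequality $\gamma_{emd}\geq\gamma_{ve}$, which is cleaner and covers all trees at once. For the upper bound the paper proceeds by cases on $\Delta(T_n)$, treating only the extreme values $\Delta(T_n)=2$ and $\Delta(T_n)=n-1$ (the latter via Theorem~\ref{thm:theorem2}) and then asserting the intermediate range $2<\Delta(T_n)<n-1$ without further argument; by contrast, you give a direct construction $W=V(T_n)\setminus\{\ell\}$ for any leaf $\ell$ and verify both the ve-domination and edge-resolving properties explicitly. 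Your approach buys a complete, uniform argument that works for every tree, at the modest cost of a short elementary case analysis on pairs of edges.
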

\begin{proof}
Note that $T_n$ satisfies $2 \leq \Delta(T_n) \leq n-1$. If maximum degree of tree $T_n$ is $\Delta(T_n) =2$, then
Proposition \ref{thm:propo2} implies that $\big\lceil \frac{m}{4}\big\rceil \leq \g_{emd}(T_n)$.
If maximum degree of $T_n$ is $\Delta(T_n) =n-1$, then Theorem \ref{thm:theorem2}
implies that the vertex-edge dominant edge metric dimension $\g_{emd}(T_n)$ of $T_n$ is
$\g_{emd}(T_n)=n-1$ or $\g_{emd}(T_n)=n-2$. Thus, if the maximum degree of tree $T_n$ satisfies
$2 < \Delta(T_n) < n-1$, then, this implies that the vertex-edge dominant edge metric dimension of
$T_n$ satisfies $\Big\lceil\frac{m}{4}\Big\rceil\leq\g_{emd}(T_n)\leq n-1.$ This shows the theorem.
\end{proof}
\wbull

\subsection{The Corona Product of two graphs}
Let $\Gamma$ and $\Omega$ be two graphs of order $p$ and $q$, respectively. The corona product graph $\Gamma \odot \Omega$
is defined as the graph obtained from $\Gamma$ and $\Omega$, by taking one copy of $\Gamma$ and $p$ copies of $\Omega$ and
joining by an edge the $i^{th}$-vertex of $\Gamma$ with every vertex from the $i^{th}$-copy of $\Omega$. Given a vertex
$u \in V (\Gamma)$, the copy of $\Omega$ whose vertices are adjacent to $u$ is denoted by $\Omega_u$.

\begin{proposition}
Let $\Gamma$ and $\Omega$ be two connected graphs  and $| V(\Omega) | \geq 2$. Then,
$\gamma_{ve}(\Gamma \odot \Omega) = |V (\Gamma)|$. \label{pro:corona}
\end{proposition}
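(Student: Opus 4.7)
The plan is to match an upper bound and a lower bound, both equal to $p := |V(\Gamma)|$. For the upper bound, I would take $T = V(\Gamma)$ and verify that every edge of $\Gamma \odot \Omega$ is ve-dominated. Each edge falls into one of three types: edges inside $\Gamma$ (incident to a vertex of $T$), edges of the form $u_iv$ with $v \in V(\Omega_{u_i})$ (also incident to $u_i \in T$), and edges $vw$ lying inside some copy $\Omega_{u_i}$. For the third type, the edge $u_iv$ is incident to $u_i \in T$ and adjacent to $vw$, so $u_i$ ve-dominates $vw$. This yields $\gamma_{ve}(\Gamma \odot \Omega) \leq p$.

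For the lower bound, I would exploit the fact that since $\Omega$ is connected with at least two vertices, each copy $\Omega_{u_i}$ contains at least one edge $e_i = v_iw_i$. Set $A_i := V(\Omega_{u_i}) \cup \{u_i\}$; by construction of the corona, the sets $A_1,\ldots,A_p$ are pairwise disjoint. The key observation is that every vertex that ve-dominates $e_i$ must belong to $A_i$: indeed, every neighbor in $\Gamma \odot \Omega$ of a vertex of $V(\Omega_{u_i})$ is either in $V(\Omega_{u_i})$ itself or equals $u_i$, so the closed neighborhoods of $v_i$ and $w_i$ in $\Gamma \odot \Omega$ are contained in $A_i$. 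Consequently any ve-dominating set $T$ satisfies $T \cap A_i \neq \emptyset$ for every $i$, and pairwise disjointness forces $|T| \geq p$.

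The step most worth dwelling on is the claim that no vertex outside $A_i$ can ve-dominate an edge of $\Omega_{u_i}$: this is what prevents a single vertex of $T$ from simultaneously serving two different copies of $\Omega$, and it is the real source of the lower bound. Combining the two bounds gives $\gamma_{ve}(\Gamma \odot \Omega) = |V(\Gamma)|$.
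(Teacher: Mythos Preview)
Your proof is correct and, for the upper bound, follows the same idea as the paper: take $T=V(\Gamma)$ and observe that each $u_i$ ve-dominates every edge incident to $N(u_i)\supseteq V(\Omega_{u_i})$, hence all edges of the $i$-th copy.

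For the lower bound your argument is actually sharper than the paper's. The paper argues only that if some $x_i$ is removed from $T=V(\Gamma)$, then the edges of $\Omega_{x_i}$ are no longer dominated; this merely shows that $V(\Gamma)$ is an inclusion-minimal ve-dominating set, not a minimum one, since a smaller ve-dominating set need not be a subset of $V(\Gamma)$. Your approach closes this gap: by introducing the pairwise disjoint sets $A_i=V(\Omega_{u_i})\cup\{u_i\}$ and observing that any ve-dominator of an internal edge $e_i$ of $\Omega_{u_i}$ lies in $N[v_i]\cup N[w_i]\subseteq A_i$, you force every ve-dominating set (not just subsets of $V(\Gamma)$) to meet each $A_i$, giving the genuine lower bound $|T|\ge p$. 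Both routes reach the same conclusion, but yours is the rigorous one.
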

\begin{proof}
Let the graph $\Gamma \odot \Omega$ is corona product of two graphs $\Gamma$ and $\Omega$ and $| V(\Omega) | \geq 2$. If the vertex $x_i \in V(\Gamma \odot \Omega)$, then all edges incident to $N(x_i)$ are dominated  by the vertex $x_i$. The subset $V(\Omega_{x_i}) \subset N(x_i)$ is the vertex set of $i^{th}$-copy of $\Omega$ and  $U = \{y \in V(\Gamma) : x_iy \in E(\Gamma) \}  \subset N(x_i)$. The set $N(x_i) = V(\Omega_{x_i}) \cup U$ shows that the edges of $i^{th}$-copy of $\Omega$ are dominated by the vertex $x_i$. So, that the set $T = V(\Gamma)$ will dominate all edges of $\Gamma \odot \Omega$ and it is clear that $\gamma_{ve}(\Gamma \odot \Omega) \leq |V (\Gamma)|$.
The set $T = V(\Gamma) = \{x_1, x_2, \ldots, x_p\}$ is the minimum vertex-edge dominant set for $\Gamma \odot \Omega$. If the vertex $x_i \not\in T$, then all edges of $i^{th}$-copy of $\Omega$ are not dominated by any vertex of $T$, it is clear that $\gamma_{ve}(\Gamma \odot \Omega) \geq |V (\Gamma)|$. This implies that $\gamma_{ve}(\Gamma \odot \Omega) = |V (\Gamma)|$.
\end{proof}
\wbull

\begin{theorem}
Let $\Gamma$ and $\Omega$ be two connected graphs  and $| V(\Omega) | \geq 2$. Then,
$\gamma_{emd}(\Gamma \odot \Omega) = \beta_e (\Gamma \odot \Omega)$.
\end{theorem}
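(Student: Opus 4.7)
My plan is to squeeze $\gamma_{emd}(\Gamma \odot \Omega)$ between $\beta_e(\Gamma \odot \Omega)$ from above and below. The lower bound is immediate: by the general inequality $\max\{\gamma_{ve}(G), \beta_e(G)\} \leq \gamma_{emd}(G)$ established earlier, we have $\gamma_{emd}(\Gamma \odot \Omega) \geq \beta_e(\Gamma \odot \Omega)$. The whole work is in the matching upper bound, for which I would exhibit a single set $W \subseteq V(\Gamma \odot \Omega)$ with $|W| = \beta_e(\Gamma \odot \Omega) = |V(\Gamma)|\cdot(|V(\Omega)|-1)$ that is simultaneously an edge resolving set and a vertex-edge dominating set.

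The natural candidate is the standard edge metric basis used in Peterin and Yero's Theorem \ref{thm:corona}: for each $i$, pick one vertex $v_i \in V(\Omega_{x_i})$ to omit, and let
\[
W = \bigcup_{i=1}^{|V(\Gamma)|} \bigl(V(\Omega_{x_i}) \setminus \{v_i\}\bigr).
\]
By Theorem \ref{thm:corona}, $W$ is an edge resolving set and $|W| = |V(\Gamma)|\cdot(|V(\Omega)|-1) = \beta_e(\Gamma \odot \Omega)$, so only the vertex-edge domination part needs verification. I would split the edges of $\Gamma \odot \Omega$ into three types and handle each: (i) edges lying entirely inside some copy $\Omega_{x_i}$ have at least one endpoint in $W$ since $W$ misses only one vertex of that copy, so they are dominated trivially; (ii) pendant-style edges $x_i u$ with $u \in V(\Omega_{x_i})$: if $u \neq v_i$ then $u \in W$ dominates the edge, and if $u = v_i$ then any neighbor $w$ of $v_i$ inside $\Omega_{x_i}$ (which exists because $|V(\Omega)| \geq 2$ and $\Omega$ is connected) lies in $W$ and is adjacent to both $x_i$ and $v_i$, so $w$ dominates $x_i v_i$; (iii) edges $x_i x_j \in E(\Gamma)$: any vertex of $V(\Omega_{x_i}) \cap W$ (again nonempty since $|V(\Omega)|-1 \geq 1$) is adjacent to $x_i$, so such a vertex dominates $x_i x_j$.

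The main obstacle, such as it is, will be item (ii) in the edge case $|V(\Omega)| = 2$, because then $V(\Omega_{x_i}) \cap W$ is the singleton $\{w\}$ where $w$ is the unique neighbor of $v_i$ in $\Omega_{x_i}$, so one must be careful that $w$ is genuinely adjacent to $v_i$; this is why the hypothesis $|V(\Omega)| \geq 2$ together with the connectedness of $\Omega$ is used, and it is also the reason the construction relies crucially on $\Omega$ being connected (otherwise $v_i$ could be isolated inside $\Omega_{x_i}$ after removing $W$). Once the three edge types are checked, $W$ is a vertex-edge dominant edge resolving set of cardinality $\beta_e(\Gamma \odot \Omega)$, giving $\gamma_{emd}(\Gamma \odot \Omega) \leq \beta_e(\Gamma \odot \Omega)$ and completing the proof.
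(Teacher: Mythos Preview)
Your proposal is correct and follows the same overall strategy as the paper: the lower bound $\gamma_{emd}\ge\beta_e$ is immediate from the general inequality, and the upper bound is obtained by exhibiting a concrete edge metric basis of size $|V(\Gamma)|\cdot(|V(\Omega)|-1)$ that happens also to be a vertex-edge dominating set.

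The difference lies only in the choice of that basis. The paper takes $W=\{x_i:1\le i\le p\}\cup\{y_i^j:1\le i\le p,\ 1\le j\le q-1\}$, so that $T=V(\Gamma)\subset W$ and ve-domination is inherited for free from Proposition~\ref{pro:corona}. You instead take $W=\bigcup_i\bigl(V(\Omega_{x_i})\setminus\{v_i\}\bigr)$, which contains no vertex of $\Gamma$, and then verify ve-domination by hand via the three-edge-type case split. Your route costs a short extra argument on the domination side but has the advantage that the cardinality $|W|=|V(\Gamma)|\cdot(|V(\Omega)|-1)$ is transparently correct; the paper's displayed $W$ actually has $p+p(q-1)=pq$ elements, so its cardinality claim is off by $p$. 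One minor point worth tightening in your write-up: Theorem~\ref{thm:corona} as quoted only asserts the \emph{value} of $\beta_e(\Gamma\odot\Omega)$, not that your particular $W$ realises it, so you are implicitly invoking the explicit basis from Peterin--Yero's proof; either cite that construction directly or add the short verification that your $W$ edge-resolves.
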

\begin{proof}
Note that the graph $\Gamma \odot \Omega$ is corona product of two graphs $\Gamma$ with order $p$ and $\Omega$ with order $q$, where $| V(\Omega) | \geq 2$ and  the vertex set of $\Gamma \odot \Omega$ is defined as $V(\Gamma \odot \Omega) = \{x_i | 1\leq i \leq p\} \cup \{y_i^j | 1\leq i \leq p, 1\leq j \leq q\}$. By Theorem \ref{thm:corona} (resp. Proposition \ref{pro:corona}), we have that $\beta_e (\Gamma \odot \Omega) = |V (\Gamma)| \cdot (|V (\Omega)| - 1)$ (resp. $\gamma_{ve}(\Gamma \odot \Omega) = |V (\Gamma)|$). The set $W = \{x_i | 1\leq i \leq p\} \cup \{y_i^j | 1\leq i \leq p, 1\leq j \leq q-1\}$ is an edge resolving set for $\Gamma \odot \Omega$ and $T = \{x_i | 1\leq i \leq p\} \subset W$ is vertex-edge dominant set for $\Gamma \odot \Omega$ by Proposition \ref{pro:corona}. Since, $| W | =  |V (\Gamma)| \cdot (|V (\Omega)| - 1)$ and the set $W$ is vertex-edge dominant edge resolving set for $\Gamma \odot \Omega$.  We obtain the desired result.
\end{proof}
\wbull

\subsection{The Join of two graphs}
Noted that the $\Gamma$ and $\Omega$ are two graphs, the join graph $\Gamma \vee \Omega$ is obtained from $\Gamma$ and $\Omega$ by adding an edge between every vertex of $\Gamma$ and every vertex of $\Omega$. It is clearly observed that the join graph $\Gamma \vee \Omega$ is always a connected independently of the connectivity of the graphs $\Gamma$ and $\Omega$.

\begin{proposition}
Let $\Gamma$ and $\Omega$ be two connected graphs. Then,
$\gamma_{ve}(\Gamma \vee \Omega) = 1 ~or~ 2$. \label{pro:join}
\end{proposition}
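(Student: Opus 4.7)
The plan is to establish $\gamma_{ve}(\Gamma \vee \Omega) \leq 2$ by a direct two-vertex construction, and combine this with the trivial bound $\gamma_{ve}(\Gamma \vee \Omega) \geq 1$ to conclude.

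First I would recall the edge structure of the join. The edge set $E(\Gamma \vee \Omega)$ partitions into three types: the edges of $\Gamma$, the edges of $\Omega$, and the cross edges $\{xy : x \in V(\Gamma),\ y \in V(\Omega)\}$. The property of the join that the proof exploits is that every vertex of $\Gamma$ is adjacent in $\Gamma \vee \Omega$ to every vertex of $\Omega$, and vice versa.

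Next I would pick any vertex $u \in V(\Gamma)$ and any vertex $v \in V(\Omega)$, set $T = \{u, v\}$, and verify that $T$ is a $ve$-dominating set of $\Gamma \vee \Omega$ by a case analysis on edge type. For a cross edge $xy$ with $x \in V(\Gamma)$ and $y \in V(\Omega)$, either $x = u$ (so $u$ is incident to the edge) or $uy$ is a join edge; in either case $u$ dominates $xy$. For an edge $xx' \in E(\Gamma)$ lying entirely inside $\Gamma$, the vertex $v \in V(\Omega)$ is adjacent to $x$ through a join edge, which is adjacent to $xx'$, so $v$ $ve$-dominates $xx'$. Edges inside $\Omega$ are handled symmetrically by $u$. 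This establishes $\gamma_{ve}(\Gamma \vee \Omega) \leq 2$, and combined with the trivial lower bound $\gamma_{ve} \geq 1$ valid whenever the graph has at least one edge, the proposition follows.

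There is no real obstacle in the argument itself; the construction is essentially forced by the universal adjacency between $V(\Gamma)$ and $V(\Omega)$. What is more subtle, and presumably the subject of follow-up analysis, is distinguishing when exactly $\gamma_{ve}(\Gamma \vee \Omega) = 1$ versus $= 2$. Since $\mathrm{diam}(\Gamma \vee \Omega) \leq 2$ always holds, Proposition \ref{thm:propo1} tells us that $\gamma_{ve}(\Gamma \vee \Omega) = 1$ if and only if there is a vertex $x$ of $\Gamma \vee \Omega$ whose set of vertices at distance exactly two forms an independent set in $\Gamma \vee \Omega$; this is the criterion that separates the two possible values.
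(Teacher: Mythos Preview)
Your proof is correct and rests on the same key construction as the paper: the two-vertex set $\{u,v\}$ with $u\in V(\Gamma)$ and $v\in V(\Omega)$ is a $ve$-dominating set of $\Gamma\vee\Omega$, which together with the trivial lower bound gives $\gamma_{ve}\in\{1,2\}$. The only organizational difference is that the paper additionally splits into cases according to whether $\Gamma$ or $\Omega$ is complete, in order to pin down exactly when the value is $1$ versus $2$; you correctly flag this sharper question as a separate follow-up and point to Proposition~\ref{thm:propo1} for the relevant criterion, which is all that is needed here.
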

\begin{proof}
Let the graph $\Gamma \vee \Omega$ is join of two connected graphs $\Gamma$ and $\Omega$. The vertex set and edge set of $\Gamma \vee \Omega$ are $V(\Gamma \vee \Omega) = V(\Gamma) \cup V(\Omega)$ and $E(\Gamma \vee \Omega) = E(\Gamma) \cup E(\Omega) \cup \{xu | x \in V(\Gamma), u \in V(\Omega) \}$, respectively. If the vertex $x \in V(\Gamma \vee \Omega)$, then all edges incident to $N(x)$ are dominated  by the vertex $x$.

\noindent{\textbf{Case 1:}} Assume That $\Gamma \cong K_n$  or $\Omega \cong K_n$.

For $\Gamma \vee \Omega$, the vertex $x \in V(\Gamma)$ will dominate all edges of $\Gamma \vee \Omega$ as $N(x) = V(\Gamma) \cup V(\Omega)$.  This implies that the singleton set $T = \{x : x \in V(\Gamma)\}$ is vertex-edge dominating set for $\Gamma \vee \Omega$ where $\Gamma \cong K_n$. Therefore, we obtain $\gamma_{ve}(\Gamma \vee \Omega) = 1$.\\
For $\Gamma \vee \Omega$, the vertex $u \in V(\Omega)$ will dominate all edges of $\Gamma \vee \Omega$ as $N(u) = V(\Gamma) \cup V(\Omega)$.  This implies that the singleton set $T = \{u : u \in V(\Omega)\}$ is vertex-edge dominating set for $\Gamma \vee \Omega$ where $\Gamma \cong K_n$. Therefore, we obtain $\gamma_{ve}(\Gamma \vee \Omega) = 1$.

\noindent{\textbf{Case 2:}} Assume That $\Gamma , \Omega \not\cong K_n$.

The set $T = \{x, u : x \in V(\Gamma), u \in V(\Omega) \}$ provides the minimum vertex-edge dominant set of $\Gamma \vee \Omega$. It is clear that the $\gamma_{ve}(\Gamma \vee \Omega) \leq 2$. The vertex $x \in V(\Gamma)$ will dominate all edges incident to $N(x) = V(\Omega) \cup \{y \in V(\Gamma) : xy \in E(\Gamma) \}$. Similarly, the vertex $u \in V(\Omega)$ will dominate all edges incident to $N(u) = V(\Gamma) \cup \{v \in V(\Omega) : uv \in E(\Gamma) \}$. If set $T_1= \{x\}$ (resp. $T_2=\{u\}$), then all edges of graph $\Gamma$ (resp. $\Omega$) are not dominated by $T_1$ (resp. $T_2$), it is clear that $\gamma_{ve}(\Gamma \vee \Omega) \geq 2$. This implies that $\gamma_{ve}(\Gamma \vee \Omega) = 2$. This completes the proof.
\end{proof}
\wbull

\begin{theorem}
Let $\Gamma$ and $\Omega$ be two connected graphs. Then,
$\gamma_{emd}(\Gamma \vee \Omega) = \beta_e (\Gamma \vee \Omega)$.
\end{theorem}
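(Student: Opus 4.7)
The plan is to prove the equality by establishing $\gamma_{emd}(\Gamma \vee \Omega) \geq \beta_e(\Gamma \vee \Omega)$ and $\gamma_{emd}(\Gamma \vee \Omega) \leq \beta_e(\Gamma \vee \Omega)$ separately. The lower bound is immediate: every $ve$-dominant edge resolving set is, by definition, an edge resolving set, so
$$\gamma_{emd}(\Gamma \vee \Omega) \geq \beta_e(\Gamma \vee \Omega) = |V(\Gamma)| + |V(\Omega)| - 1$$
by Theorem \ref{thm:join}.

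For the upper bound, I would exhibit an explicit set $W$ of size $|V(\Gamma)| + |V(\Omega)| - 1$ that is simultaneously an edge resolving set and a $ve$-dominating set. Without loss of generality, pick any vertex $v^{*} \in V(\Gamma)$ and set $W = V(\Gamma \vee \Omega) \setminus \{v^{*}\}$. This set has cardinality $|V(\Gamma)| + |V(\Omega)| - 1$, matching the target bound exactly, so it suffices to verify the two defining properties.

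The $ve$-domination of $W$ follows easily from the universal adjacency built into the join. Since $v^{*} \in V(\Gamma)$, we have $V(\Omega) \subseteq W$. In $\Gamma \vee \Omega$, every vertex $u \in V(\Omega)$ is adjacent to every vertex of $V(\Gamma)$, so $u$ $ve$-dominates every edge of $\Gamma$ (through the adjacent edge $ux$ at either endpoint $x$), every cross-edge $xu'$ (through the edge $ux$), and every edge of $\Omega$ incident to $u$. Ranging $u$ over $V(\Omega) \subseteq W$ then covers every edge of $\Omega$ as well, so $W$ is $ve$-dominating regardless of the internal structure of $\Gamma$ and $\Omega$.

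The harder step is verifying that $W$ is an edge resolving set. My preferred approach is to invoke the construction behind Theorem \ref{thm:join}, under which omitting a single vertex from the join already realises an edge metric basis. Alternatively, one can argue directly: the join has diameter at most two, so the entries of $r(e \mid W)$ for any edge $e = ab$ lie in $\{0, 1, 2\}$, with the value $0$ realised exactly at the coordinates $w \in \{a,b\} \cap W$. Given two distinct edges $e, f$, at least three of their (up to) four endpoints belong to $W$, and a short case analysis depending on whether $e$ and $f$ share a vertex produces a coordinate in which their codes differ. This verification, while routine, is the only delicate step of the plan; combined with the lower bound it yields $\gamma_{emd}(\Gamma \vee \Omega) = \beta_e(\Gamma \vee \Omega)$, as required.
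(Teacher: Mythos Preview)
Your proposal is correct and follows essentially the same approach as the paper: remove a single vertex from $V(\Gamma \vee \Omega)$ to obtain a set $W$ of size $|V(\Gamma)|+|V(\Omega)|-1$, then argue that $W$ is simultaneously an edge resolving set (via Theorem~\ref{thm:join}) and a $ve$-dominating set (the paper cites Proposition~\ref{pro:join}, while you argue directly using the universal adjacency of $V(\Omega)$ in the join). The only cosmetic difference is that the paper deletes a vertex of $\Omega$ whereas you delete one of $\Gamma$; your write-up is in fact a bit more self-contained, since you sketch the direct verification that $W$ edge-resolves rather than relying solely on the cited construction.
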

\begin{proof}
Note that the graph $\Gamma \vee \Omega$ is join of two graphs $\Gamma$ with order $p$ and $\Omega$ with order $q$  the vertex set of $\Gamma \vee \Omega$ is defined as $V(\Gamma \vee \Omega) = \{x_i | 1\leq i \leq p\} \cup \{u_j | 1\leq j \leq q\}$. By Theorem \ref{thm:join} (resp. Proposition \ref{pro:join}), we have that $\beta_e (\Gamma \vee \Omega) = |V (\Gamma)| + |V (\Omega)| - 1$ (resp. $\gamma_{ve}(\Gamma \vee \Omega) = 1 ~or~ 2$). The set $W = \{x_i | 1\leq i \leq p\} \cup \{y_j | 1\leq j \leq q-1\}$ is an edge resolving set for $\Gamma \vee \Omega$ and $T = \{x, u\} \subset W$ is vertex-edge dominant set for $\Gamma \vee \Omega$ by Proposition \ref{pro:join}. Since, $| W | =  |V (\Gamma)| + |V (\Omega)| - 1$ and the set $W$ is vertex-edge dominant edge resolving set for $\Gamma \vee \Omega$.  We obtain the desired result.
\end{proof}
\wbull

\section{Comparison between the dominant metric dimension and the vertex-edge dominant edge metric dimension}\label{sec6}
Kelenc et al. \cite{kele} showed the metric dimension and the edge metric dimension of a graph $\G$ are not
comparable, since there exist graphs satisfying one of either $\B(\G)<\B_{e}(\G)$ or $\B(\G)=\B_{e}(\G)$ or $\B(\G)>\B_{e}(\G)$.
In response, Knor et al. \cite{Knor} presented further infinite families of graphs satisfying $\B(\G)>\B_{e}(\G)$.
Lewis \cite{Lew} in his PhD thesis showed that, for any graph $\G$, the inequality $\g_{ve}(\G) \leq \g(\G)$
holds, in general.

In view of the comparison by Kelenc et al. \cite{kele}, it is natural to consider a similar comparison between
the dominant metric dimension and the vertex-edge dominant edge metric dimension of graphs. This section presents
families of graph showing that these aforementioned parameters are also not comparable, in general.
Table \ref{tab:COM} presents standard families of graphs with their dominant metric dimension$ \g_{md}(\G)$ and the
vertex-edge dominant edge metric dimension $\g_{emd}(\G)$. Note that some standard families such the path graph $P_n$,
the cycle graph $C_n$, the complete graph $K_n$ and the complete bipartite graph $K_{n,m}$
showcase the behavior $\g_{md}(\G) \geq \g_{emd}(\G)$. For instance, the star graph $S_{1,n}$ exhibits
$\g_{md}(S_{1,n}) > \g_{emd}(S_{1,n})$, whereas, the wheel and fan graphs satisfy $\g_{md}(\G) < \g_{emd}(\G)$.
Moreover, some path graphs satisfy the $\g_{md}(\G) = \g_{emd}(\G)$ instead.

\begin{table}[h]
		\centering
		\begin{tabular}{|c|c|c|c|c|c|c|c|c|c|}
		\hline
	    Families of graphs &   $\g_{md}(\G)$ &  $\g_{emd}(\G)$& Remark \\ \hline
		$P_2$&$\B(P_2)=1$& $\B_e(P_2)=1$& $\g_{md}(P_2) = \g_{emd}(P_2)$\\
		$P_3$&$\B(P_3) + 1=2 $& $\B_e(P_3)=1$& $\g_{md}(P_3) > \g_{emd}(P_3)$\\
		$P_4$&$\B(P_4) + 1=2 $ & $\B_e(P_4) + 1=2$& $\g_{md}(P_4) = \g_{emd}(P_4)$\\
		$P_5$&$\g(P_5)= \lceil \frac{n}{3} \rceil$=2 & $\B_e(P_5) + 1=2$& $\g_{md}(P_5) = \g_{emd}(P_5)$\\
        $P_n : n \geq 6$& $\g(P_n)= \lceil \frac{n}{3} \rceil$& $\g_{ve}(P_n) = \lfloor \frac{n+2}{4} \rfloor$& $\g_{md}(P_n) \geq \g_{emd}(P_n)$\\ \hline
        $C_n : n=3,4,5$&$\B(C_n)=2$& $\B_e(C_n)=2$& $\g_{md}(C_n) = \g_{emd}(C_n)$\\
        $C_n : n=6,7$&$\B(C_n)+1=3$ & $\B_e(C_n)=2$& $\g_{md}(C_n) > \g_{emd}(C_n)$\\
        $C_n : n=8$&$\B(C_8)+1=3$ & $\B_e(C_8)+1=3$& $\g_{md}(C_8) = \g_{emd}(C_8)$\\
        $C_n : n \geq 9$& $\g(C_n)= \lceil \frac{n}{3} \rceil$& $\g_{ve}(C_n)= \lfloor \frac{n+3}{4} \rfloor $& $\g_{md}(C_n) \geq \g_{emd}(C_n)$\\ \hline
        $S_{1,n} : n \geq 3$& $\B(S_{1,n})+1$& $ \B_{e}(S_{1,n}) = n-1$ & $\g_{md}(S_{1,n}) > \g_{emd}(S_{1,n})$\\ \hline
        $K_n : n \geq 2$& $\B(K_n)$& $ \B_{e}(K_n) $ & $\g_{md}(K_n) = \g_{emd}(K_n)$\\ \hline
        $K_{n,m} : n,m \geq 2$ &$\B(K_{n,m})$ & $ \B_{e}(K_{n,m})$ &$\g_{md}(K_{n,m}) = \g_{emd}(K_{n,m})$ \\ \hline
        $W_{1,n}; n \geq 6$& $\B(W_{1,n})$ & $\B_e(W_{1,n})$& $\g_{md}(W_{1,n}) < \g_{emd}(W_{1,n})$\\
        $n \equiv 0,2,4 (\mod 5)$ &&&\\ \hline
        $W_{1,n}; n \geq 6$& $\B(W_{1,n}) + 1$ & $\B_e(W_{1,n})$& $\g_{md}(W_{1,n}) < \g_{emd}(W_{1,n})$\\
        $n \equiv 1, 3(\mod 5)$ &&&\\ \hline
        $F_{1,n}; n \geq 6$& $\B(F_{1,n})$ & $\B_e(F_{1,n})$& $\g_{md}(F_{1,n}) < \g_{emd}(F_{1,n})$\\
        $n \equiv 0,2,4 (\mod 5)$ &&&\\ \hline
        $F_{1,n}; n \geq 6$& $\B(F_{1,n}) + 1$ & $\B_e(F_{1,n})$& $\g_{md}(F_{1,n}) < \g_{emd}(F_{1,n})$\\
        $n \equiv 1, 3 (\mod 5)$ &&&\\ \hline
        \end{tabular}
        \vspace*{0.3cm}
        \caption{Comparison between the dominant metric dimension and the vertex-edge dominant edge metric dimension of some families of graphs.}\label{tab:COM}
	\end{table}
Based on the data in Table \ref{tab:COM}, there is no comparison between the dominant metric dimension and the vertex-edge dominant edge metric dimension, in general.

Next, we focus on bipartite graphs for comparison between the dominant metric dimension and the vertex-edge dominant edge metric dimension.
\subsection{Comparison between $\g_{md}$ and $\g_{emd}$ for bipartite graphs}

Although, general graphs are not comparable for $\g_{md}$ and $\g_{emd}$, we show that trees are comparable.
\begin{theorem}\label{treescomparison}
Let $T_n$ be a tree. Then, we have $\g_{md}(T_n)\geq \g_{emd}(T_n)$.
\end{theorem}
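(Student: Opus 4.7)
The plan is to prove the inequality by showing that every minimum dominant resolving set $D$ of $T_n$ is automatically a vertex-edge dominant edge resolving set, which immediately yields $\g_{emd}(T_n) \leq |D| = \g_{md}(T_n)$. Two properties of $D$ must be verified: that $D$ ve-dominates all edges of $T_n$, and that $D$ edge-resolves all pairs of distinct edges.

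The first property is straightforward and is essentially a pointwise version of Proposition \ref{prop0}. For any edge $e = xy \in E(T_n)$, the vertex domination of $D$ either places an endpoint of $e$ into $D$ (in which case that endpoint ve-dominates $e$ via condition (i)), or supplies a neighbor $w \in D$ of $x$ or $y$ (in which case $w$ ve-dominates $e$ via condition (ii) on the incident edge $wx$ or $wy$).

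The main obstacle is the second property, which I plan to deduce from the following tree-specific lemma: in any tree $T$, every vertex resolving set is also an edge resolving set. I would prove the lemma by contrapositive: assuming a set $S \subseteq V(T)$ fails to edge-resolve two distinct edges $e, f$, exhibit a vertex pair that $S$ fails to resolve. The argument splits on whether $e$ and $f$ share an endpoint.

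If $e = uv$ and $f = vy$ share the endpoint $v$, then analyzing the components of $T - v$ shows that the vertices $w$ with $d(w, e) = d(w, f)$ are precisely $v$ together with those components of $T - v$ containing neither $u$ nor $y$; for each such $w$, a direct trace of shortest paths through $v$ yields $d(w, u) = d(w, y)$. If instead $e = uv$ and $f = xy$ are vertex-disjoint, label so that the unique $e$-to-$f$ path in $T$ runs from $v$ to $x$ with some length $k \geq 1$, and for each vertex $w$ let $p$ be the vertex on this path closest to $w$, with $a = d(v, p)$ and $b = d(w, p)$. For $w$ lying on the $u$-side or $y$-side (the components of $T - uv$ and $T - xy$ containing $u$ and $y$ respectively), one checks $d(w, e) \neq d(w, f)$ directly; for the remaining $w$, the formulas $d(w, e) = a + b$ and $d(w, f) = (k - a) + b$ force $k$ to be even and $p$ to be the midpoint of the $v$-to-$x$ path whenever $d(w, e) = d(w, f)$, and a final short computation then gives $d(w, u) = d(w, y) = b + k/2 + 1$. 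In either case, $S$ fails to resolve the pair $(u, y)$, which proves the lemma and hence the theorem.
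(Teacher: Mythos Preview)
Your argument is correct and takes a genuinely different route from the paper. The paper's proof invokes the numerical relations $\gamma_{ve}(T_n)\le\gamma(T_n)$ and $\beta(T_n)=\beta_e(T_n)$ from Proposition~\ref{prop0} and Remark~\ref{keleremark}, and then attempts a case split on containment relations among (implicitly minimum) dominating, resolving, ve-dominating and edge-resolving sets $S,B,T,A$; that split is neither exhaustive nor is the final case justified, so the paper's argument is closer to a heuristic sketch. Your approach instead establishes the pointwise statement that any dominant resolving set of a tree is already a ve-dominant edge resolving set, from which $\gamma_{emd}(T_n)\le\gamma_{md}(T_n)$ follows at once without needing to combine separate cardinality bounds. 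The real content is your tree lemma that every vertex resolving set is an edge resolving set; this is strictly stronger than the equality $\beta(T_n)=\beta_e(T_n)$ the paper cites, and your contrapositive verification in the two edge-configurations (incident edges at a common vertex $v$, and disjoint edges joined by a $v$--$x$ path whose midpoint condition forces $d(w,u)=d(w,y)$) is correct. One minor remark: in the incident-edge case you only need the implication $d(w,e)=d(w,f)\Rightarrow d(w,u)=d(w,y)$, so the full ``precisely'' characterisation of the equal-distance locus is more than required, though what you state is true.
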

\proof
By Proposition \ref{prop0}, we have $\g_{ve}(T_n) \leq \g(T_n)$ if $T_n$ not a path.
Moreover, by Remark \ref{keleremark}, we have $\B(T_n) = \B_e(T_n)$.
Let $S \subseteq V(T_n)$ be a dominating set and $B \subseteq V(T_n)$ be a resolving set of $T_n$.
Moreover, let $T \subseteq V(T_n)$ be a vertex-edge dominating set and $A \subseteq V(T_n)$ be an edge resolving set of $T_n$.
We distinguish the following cases:

\noindent{\textbf{Case 1:} If $S \subset B$ and $T \subset A$, then $\g_{md}(T_n) = \g_{emd}(T_n)$.

\noindent{\textbf{Case 2:} If $B \subset S$ and $A \subset T$, then $\g_{md}(T_n) \geq \g_{emd}(T_n)$.

\noindent{\textbf{Case 3:} If $B \nsubseteq S$ and $A \nsubseteq T$, then $\g_{md}(T_n) \geq \g_{emd}(T_n)$.

It is clear that the dominant metric dimension of $T_n$ is always greater or equal to its vertex-edge dominant edge metric dimension,
i.e. $\g_{md}(T_n)\geq \g_{emd}(T_n)$.
\wbull

By Theorem \ref{treescomparison}, trees are comparable with respect to $\g_{md}$ and $\g_{emd}$. However, for non-tree bipartite graphs,
we show that there is no comparison between $\g_{md}$ and $\g_{emd}$.

First, we construct a non-tree bipartite graph $\G$ satisfying $\g_{md}(\G)>\g_{emd}(\G)$.
The graph $\G$ in Figure \ref{fig:gamma} having $V(\G)=\{a,c,e,f,h\}\cup\{b,d,g,i,j\}$ and $E(\G)=\{e_1,e_2,\ldots,e_{14}\}$.
We show that $\g_{md}(\G)>\g_{emd}(\G)$ for $\G$ in Figure \ref{fig:gamma}.

\begin{figure}[htbp!]
\centering
  \includegraphics[width=5cm]{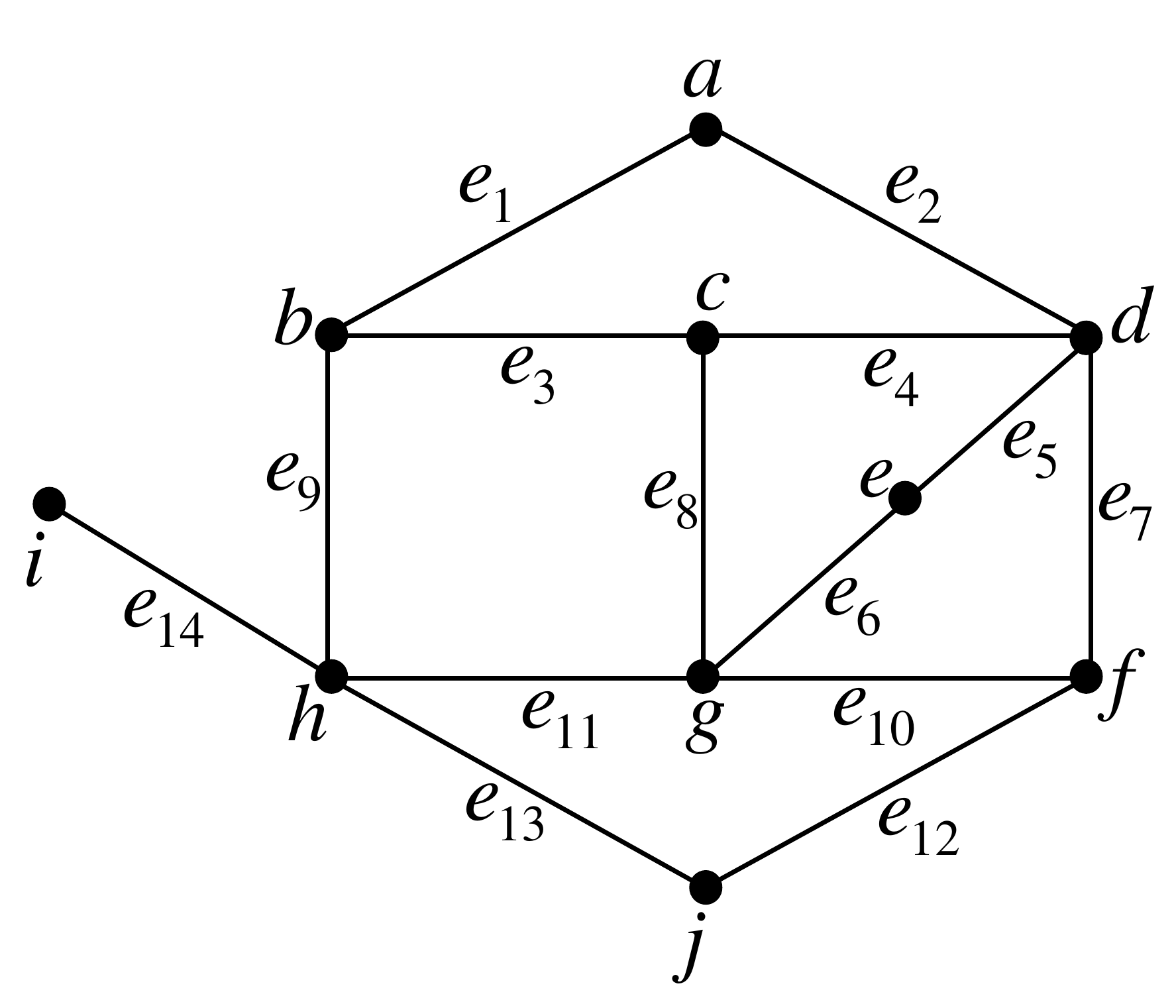}
  \caption{The non-tree bipartite graph $\G$ satisfying $\g_{md}(\G)>\g_{emd}(\G)$.}\label{fig:gamma}
\end{figure}
Next, we show that $\g_{md}(\G)=5$ and $\g_{emd}(\G)=3$, thus, satisfying $\g_{md}(\G)>\g_{emd}(\G)$.
Note that the set $B = \{a,c,e,f \}$ is a resolving set of $\G$. Since, $\G$ is not a path graph, we have $\B(\G)\geq 2$.
The representations of all vertices of $\G$ with respect to the set $B =\{a,c,e,f\}$ are given in the Table \ref{tab:r1}.
\begin{table}[htbp!]
		\centering
        \begin{tabular}{|c|c|c|c|c|c|c|c|c|c|}
		\hline
	    Vertex &$a$ & $b$ & $c$& $d$ &$e$\\ \hline
		${\small r(.,.)}$ & $(0,2,2,2)$& $(1,1,3,3)$& $(2,0,2,2)$& $(1,1,1,1)$& $(2,2,0,2)$\\ \hline
	    Vertex &$f$& $g$&$h$&$i$ &$j$ \\ \hline
		${\small r(.,.)}$ & $(2,2,2,0)$& $(3,1,1,1)$& $(2,2,2,2)$& $(3,3,3,3)$& $(3,3,3,1)$\\ \hline
        \end{tabular}
        \vspace*{0.3cm}
        \caption{The representations of all vertices of $\G$ with respect to the set $B =\{a,c,e,f\}$.}\label{tab:r1}
\end{table}
Table \ref{tab:r1} shows the representations of all the vertices of $\G$ with respect to $B = \{a,c,e,f \}$ are unique.
Thus, we deduce that $\B(\G) \leq 4$. Suppose to the contrary that $\B(\G) = 3$, and no two vertices have the same representation.
Let $B$ be a resolving set in $\G$ having cardinality three. Assume $V(\G) = X \cup Y$ and let $X =\{a,c,e,f,h\}$ and $Y =\{b,d,g,i,j\}$
We divide this into the following cases:

\noindent{\textbf{Case 1:} Assume $B \subset X$ is a resolving set and $|B| = 3$. If $c,e \not\in B$, then $r(c | X \setminus \{c, e\}) = r(e | X \setminus \{c, e\})$. If $e, f \not\in B$, then $r(e | X \setminus \{e, f\}) = r(f | X \setminus \{e, f\})$. It is clear that $c, e, f \in B$ but $B= \{c,e,f\}$ is not a resolving set for $\G$ as $r(a | B) = r(h | B)$, so, any pair of vertices $c,e$ and $f$ is compulsory for resoling set.
If $B_1=\{a,c,f\}$, then $r(e|B_1)=r(h|B_1)$, which is a contradiction. If $B_2 =\{c,f,h\}$, then $r(a|B_2)=r(e|B_2)$, which is a contradiction. If $B_3 =\{a,e,f\}$, then $r(c|B_3)=r(h|B_3$, which is a contradiction. If $B_4 =\{e,f,h\}$, then $r(a|B_4)=r(c|B_4)$, which is a contradiction. If $B_5=\{a,c,e\}$, then $r(h|B_5)=r(f|B_5)$, which is a contradiction. If $B_6 =\{c,e,h\}$, then $r(a|B_6)=r(f|B_6)$, which is a contradiction. This shows that no resolving set $B \subset X$, of cardinality three exists in $\G$.

\noindent{\textbf{Case 2:} Assume $B \subset Y$ is a resolving set and $|B| = 3$. If $b \not\in B$, then $r(c | Y \setminus \{b\}) = r(e | Y \setminus \{b\})$. If $j \not\in B$, then $r(e | Y \setminus \{j\}) = r(f | Y \setminus \{j\})$. It is clear that vertex $b$ and vertex $j$ are compulsory for resoling set.
If $B_1=\{b,d,j\}$, then $r(a|B_1)=r(c|B_1)$, which is a contradiction. If $B_2 =\{b,g,j\}$, then $r(a|B_2)=r(c|B_2)$ and $r(d|B_2)=r(i|B_2)$, which is a contradiction. If $B_3 =\{b,d,i\}$, then $r(a|B_3)=r(c|B_3)$, which is a contradiction. This shows that no resolving set $B \subset Y$, of cardinality three exists in $\G$.

\noindent{\textbf{Case 3:}
Assume $B \subset X \cup Y$ is a resolving set and $|B| = 3$. From the vertices $b, c$ and $e$, a vertex is compulsory for resolving set because $r(c | V(\G) \setminus \{b,c,e\}) = r(e | V(\G) \setminus \{b,c,e\})$. Similarly, From the vertices $e, f$ and $j$, a vertex is compulsory for resolving set because $r(e | V(\G) \setminus \{e,f,j\}) = r(f | V(\G) \setminus \{e,f,j\})$.
If $B_1=\{b,f,x_1\}$, and $x_1 \in \{d,e,g,h,i,j\}$ then $r(a|B_1)=r(c|B_1)$, which is a contradiction. If $B_2=\{b,f,x_2\}$, and $x_2 \in \{a,c,h,i\}$ then $r(g|B_2)=r(j|B_2)$, which is a contradiction. If $B_3=\{b,j,x_3\}$, and $x_3 \in \{d,e,f,g,h,i\}$ then $r(a|B_3)=r(c|B_3)$, which is a contradiction. If $B_4=\{b,f,x_4\}$, and $x_4 \in \{a,c,h\}$ then $r(g|B_4)=r(i|B_4)$, which is a contradiction. If $B_5=\{e,f,x_5\}$, and $x_5 \in \{b,d,h,i,j\}$ then $r(a|B_5)=r(c|B_5)$, which is a contradiction. If $B_6=\{e,f,x_6\}$, and $x_6 \in \{a,g\}$ then $r(c|B_6)=r(h|B_6)$, which is a contradiction. If $B_7=\{c,e,f\}$, then $r(a|B_7)=r(h|B_7)$, which is a contradiction. If $B_8=\{c,f,x_8\}$, and $x_8 \in \{d,g,h,i,j\}$ then $r(a|B_8)=r(e|B_8)$, which is a contradiction. If $B_9=\{c,f,x_9\}$, and $x_9 \in \{b,e,j\}$ then $r(d|B_9)=r(g|B_9)$, which is a contradiction. This shows that no resolving set $B \subset X \cup Y$, of cardinality three exists in $\G$.

This shows that no resolving set of cardinality three exists in $\G$. Thus, we have $\B(\G) \geq 4$ and therefore, $\B(\G) = 4$. Moreover, note that the set $S=\{h,d\}$ is dominating set, as it dominate all the vertices, however, it does not resolve all the vertices of $\G$. The set $A=\{a,c,e,f\}$ is a resolving set as it resolve all the vertices, however, it does not dominate all the vertices. So, the set $D=\{a,c,e,f,h\}$ is dominant resolving set and we have $\g_{md}(\G)=5$.

Next we show that $\g_{emd}(\G)=3$, for $\G$ given in Figure \ref{fig:gamma}.
Note that the set $A = \{b,g,j\}$ is an edge resolving set of $\G$.
The representations of all edges of $\G$ with respect to the set $A$ are given
in the Table \ref{tab:er}

\begin{table}[htbp!]
		\centering
        \begin{tabular}{|c|c|c|c|c|c|c|c|c|c|}
		\hline
	    Edge &$e_1$ & $e_2$ & $e_3$& $e_4$ &$e_5$ &$e_6$ &$e_7$\\ \hline
		${\small r(.,.)}$ & $(0,2,2)$& $(1,2,2)$& $(0,1,2)$& $(1,1,2)$& $(2,1,2)$ & $(2,0,2)$& $(2,1,1)$\\ \hline
	    Edge &$e_8$& $e_9$&$e_{10}$&$e_{11}$ &$e_{12}$ &$e_{13}$ &$e_{14}$\\ \hline
		${\small r(.,.)}$ & $(1,0,2)$& $(0,1,1)$& $(2,0,1)$& $(1,0,1)$& $(2,1,0)$ & $(1,1,0)$& $(1,1,1)$\\ \hline
        \end{tabular}
        \vspace*{0.3cm}
        \caption{The representations of all edges of $\G$ with respect to the set $A$.}\label{tab:er}
\end{table}
Since all the representations in Table \ref{tab:er} are unique, we deduce that $\B_e(\G)\leq3$.
Suppose on the contrary that $\B_e(\G) = 2$. Let $A$ be an edge resolving set of cardinality
two in $\G$ such that no two edges have the same representation. We divide our discussion
into a number of cases:

\noindent{\textbf{Case 1:} Assume $A = \{e,x\}$ and $x \in \{a,d,g,h,i\}$. Then, we have $r(e_4| A) = r(e_7| A)$ and $r(e_8| A) = r(e_{10}| A)$, thus, arising a contradiction.
Assume $A = \{e,y\}$ and $y \in \{b,c,f,j\}$. Then, we have $r(e_4| A) = r(e_8| A)$, $r(e_5| A) = r(e_6| A)$ and $r(e_7| A) = r(e_{10}| A)$, thus, arising a contradiction.

\noindent{\textbf{Case 2:} Assume $A = \{x,y\}$ and $x,y \in \{g,h,i\}$ or $A= \{a,d\}$. Then, we have $r(e_4| A)= r(e_5| A) = r(e_7| A)$ and $(e_6| A) = r(e_8| A) = r(e_{10}| A)$, thus, arising a contradiction. Assume $A= \{c,b\}$. Then, we have $r(e_4| A) = r(e_8| A)$ and $(e_5| A) = r(e_6| A) = r(e_7| A) = r(e_{10}| A)$, thus, arising a contradiction. Assume $A= \{f,j\}$. Then, we have $r(e_7| A) = r(e_{10}| A)$ and $(e_4| A) = r(e_5| A) = r(e_6| A) = r(e_8| A)$, thus, arising a contradiction.

\noindent{\textbf{Case 3:} Assume $A_1 = \{x,y_1\}$ and $x \in \{g,h,i\}$, $y_1 \in \{a,d\}$. Then, we have $r(e_4| A_1)= r(e_5| A_1) = r(e_7| A_1)$ and $(e_6| A_1) = r(e_8| A_1) = r(e_{10}| A_1)$, thus, arising a contradiction.
Assume $A_2 = \{x,y_2\}$ and $x \in \{g,h,i\}$, $y_2 \in \{b,c\}$. Then, we have $r(e_5| A_2) = r(e_7| A_2)$ and $(e_6| A_2) = r(e_{10}| A_2)$, thus, arising a contradiction.
Assume $A_3 = \{x,y_3\}$ and $x \in \{g,h,i\}$, $y_3 \in \{f,j\}$. Then, we have $r(e_4| A_3) = r(e_5| A_3)$ and $(e_6| A_3) = r(e_8| A_3)$, thus, arising a contradiction.

\noindent{\textbf{Case 4:} Assume $A_1 = \{x_1,y_1\}$ and $x_1 \in \{a,d\}$, $y_1 \in \{b,c\}$. Then, we have $r(e_5| A_1) = r(e_7| A_1)$ and $(e_6| A_1) = r(e_{10}| A_1)$, thus, arising a contradiction.
Assume $A_2 = \{x_2,y_2\}$ and $x_2 \in \{a,d\}$, $y_2 \in \{f,j\}$. Then, we have $r(e_4| A_2) = r(e_5| A_2)$ and $(e_6| A_2) = r(e_8| A_2)$, thus, arising a contradiction.
Assume $A_3 = \{x_3,y_3\}$ and $x_3 \in \{b,c\}$, $y_3 \in \{f,j\}$. Then, we have $r(e_4| A_3) = r(e_8| A_3)$, $r(e_5| A_3) = r(e_6| A_3)$ and $(e_7| A_3) = r(e_{10}| A_3)$, thus, arising a contradiction.

Thus, we have $\B_e(\G) \geq 3$, which altogether leads to $\B_e(\G) = 3$.
The set $T =\{h,d\}$ is a vertex-edge dominating set, it dominate all the edges but it does not resolve all the edges of $\G$.
The set $A=\{b,g,j\}$ is an edge resolving set, as it resolves all the edges as well as it dominate all the edges.
So, the set $A =\{b,g,j\}$ is a vertex-edge dominant edge resolving set and we obtain $\g_{emd}(\G)=3$.
\begin{remark}\label{remark01}
The non-tree bipartite graph $\G$ in Figure \ref{fig:gamma} satisfies $\g_{md}(\G)>\g_{emd}(\G)$. More precisely, we have
$\g_{md}(\G)=5$ and $\g_{emd}(\G)=3$.
\end{remark}

Next, we construct a non-tree bipartite graph $\Omega$ satisfying $\g_{md}(\Omega)<\g_{emd}(\Omega)$.
Consider the graph $\Omega$ in Figure \ref{fig:gamma1} having $V(\Omega)=\{a_1,a_2,\ldots,a_8\}\cup\{b_0,b_1,\ldots,b_8\}$ and $E(\Omega)=\{a_lb_0, a_lb_l, a_{l+1}b_l ; 1\leq l \leq 8 \}$.
\begin{figure}[h]
\begin{center}
  \includegraphics[width=5cm]{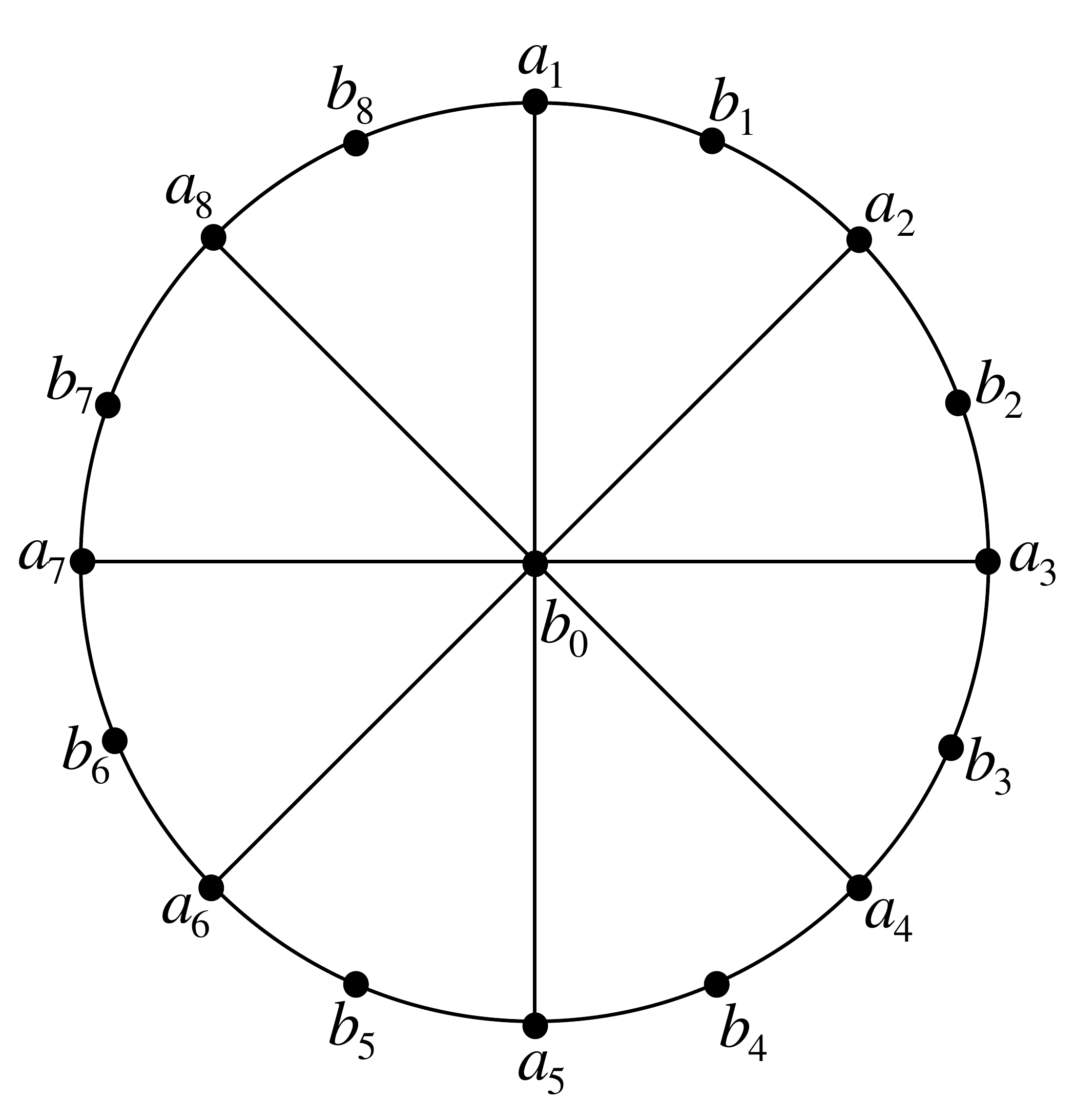}
  \caption{The non-tree bipartite graph $\Omega$ satisfying $\g_{md}(\Omega)<\g_{emd}(\Omega)$.}\label{fig:gamma1}
\end{center}
\end{figure}
Next, we show that $\g_{md}(\Omega)=6$ and $\g_{emd}(\Omega)=7$, thus, satisfying $\g_{md}(\Omega)<\g_{emd}(\Omega)$.
Note that the set $B =\{a_4,b_1,b_2,b_5,b_6\}$ is a resolving set of $\Omega$. Since, $\Omega$ is not a path graph, we have $\B(\Omega)\geq 2$.
The representations of all vertices of $\Omega$ with respect to the set $B =\{a_4,b_1,b_2,b_5,b_6\}$ are given in the Table \ref{tab:r11}.
\begin{table}[h]
		\centering
        \begin{tabular}{|c|c|c|c|c|c|c|}
		\hline
	    Vertices &$a_1$ & $a_2$ & $a_3$& $a_4$ &$a_5$ &$a_6$ \\ \hline
		${\small r(.,.)}$ & $(1,3,2,3,3)$& $(1,1,2,3,3)$& $(3,1,2,3,3)$& $(3,3,0,3,3)$& $(3,3,2,1,3)$ & $(3,3,2,1,1)$\\ \hline
	    Vertices  &$a_7$ &$a_8$&$b_0$&$b_1$& $b_2$&$b_3$\\ \hline
		${\small r(.,.)}$  & $(3,3,2,3,1)$ & $(3,3,2,3,3)$&$(2,2,1,2,2)$& $(0,2,3,4,4)$& $(2,0,3,4,4)$& $(4,2,1,4,4)$ \\ \hline
	    Vertices&$b_4$ &$b_5$ &$b_6$ &$b_7$ &$b_8$ \\ \cline{1-6}
		${\small r(.,.)}$& $(4,4,1,2,4)$& $(4,4,3,0,2)$& $(4,4,3,2,0)$& $(4,4,3,4,2)$& $(2,4,3,4,4)$\\ \cline{1-6}
        \end{tabular}
        \vspace*{0.3cm}
        \caption{The representations of all vertices of $\Omega$ with respect to the set $B$.}\label{tab:r11}
\end{table}
Table \ref{tab:r11} shows the representations of all the vertices of $\Omega $ with respect to $B = \{a_4, b_1, b_2, b_5, b_6 \}$ are unique.
Thus, we deduce that $\B(\Omega)\leq5$. Moreover, the set $B =\{a_4, b_1, b_2, b_5, b_6\}$ is not a dominating set of $\Omega$ because it does not
dominate the vertices $a_8, b_7, b_8$, so, $\gamma(\Omega)=6$.
Furthermore, the set $S=\{a_4, a_8, b_1, b_2, b_5, b_6\}$ is a dominating set, as it dominates all the vertices and it resolves all the vertices
of $\Omega$. So, the set  $D=S=\{a_4, a_8, b_1, b_2, b_5, b_6\}$ is a dominant resolving set and, thus, we have $\g_{md}(\Omega)=6$.

Next, we consider the set $A =\{a_1,a_2,a_3,a_4,a_5,a_6,a_7\}$ which is an edge resolving set of $\Omega$.
The representation of all the edges of $\Omega$ with respect to the set $A =\{a_1, a_2, a_3, a_4, a_5, a_6, a_7\}$
are given in the Table \ref{tab:er1}.
\begin{table}[htbp!]
		\centering
        \begin{tabular}{|c|c|c|c|c|c|}
		\hline
	    Edges &$a_1b_0$ & $a_2b_0$ & $a_3b_0$& $a_4b_0$ &$a_5b_0$\\ \hline
		${\small r(.,.)}$ & $(0,1,1,1,1,1,1)$& $(1,0,1,1,1,1,1)$& $(1,1,0,1,1,1,1)$& $(1,1,1,0,1,1,1)$& $(1,1,1,1,0,1,1)$\\ \hline
	    Edges &$a_6b_0$ &$a_7b_0$&$a_8b_0$& $a_1b_1$&$a_2b_2$\\ \hline
		${\small r(.,.)}$& $(1,1,1,1,1,0,1)$& $(1,1,1,1,1,1,0)$ & $(1,1,1,1,1,1,1)$& $(0,1,2,2,2,2,2)$& $(2,0,1,2,2,2,2)$\\ \hline
	    Edges &$a_3b_3$ &$a_4b_4$ &$a_5b_5$ &$a_6b_6$ &$a_7b_7$\\ \hline
		${\small r(.,.)}$& $(2,2,0,1,2,2,2)$ & $(2,2,2,0,1,2,2)$ & $(2,2,2,2,0,1,2)$ & $(2,2,2,2,2,0,1)$ & $(2,2,2,2,2,2,0)$\\ \hline
	    Edges &$a_8b_8$ &$b_1a_2$ &$b_2a_3$ &$b_3a_4$ &$b_4a_5$\\ \hline
		${\small r(.,.)}$& $(1,2,2,2,2,2,2)$ & $(1,0,2,2,2,2,2)$ & $(2,1,0,2,2,2,2)$ & $(2,2,1,0,2,2,2)$ & $(2,2,2,1,0,2,2)$\\ \hline
	    Edges &$b_5a_6$ &$b_6a_7$ &$b_7a_8$ & $b_8a_1$\\ \cline{1-5}
		${\small r(.,.)}$& $(2,2,2,2,1,0,2)$ & $(2,2,2,2,2,1,0)$ & $(2,2,2,2,2,2,1)$ & $(0,2,2,2,2,2,2)$ \\ \cline{1-5}
        \end{tabular}
        \vspace*{0.3cm}
        \caption{The representations of all edges of $\Omega$ with respect to the set $A$.}\label{tab:er1}
\end{table}
Table \ref{tab:er1} shows the representations of all the edges of $\Omega$ with respect to $A$ are unique. We deduce that $\B_e(\Omega)\leq7$.
Suppose that $\B_e(\Omega) = 6$, such that no two edges have the same representation. Let $A$ be an edge resolving set
of cardinality 6 in $\Omega$. We discuss the following case:

\noindent{\textbf{Case 1:} Assume that $A =\{a_1,a_2,a_3,a_4,a_5,a_6,a_7\}\setminus\{a_k:k=1,2,\ldots,7\}$. Then, $r(a_kb_0| A) = r(a_8b_0| A) = (1,1,1,1,1,1)$
where $k=1,2,\ldots,7$, which contradicts to our assumption.

\noindent This shows that $\B_e(\Omega) \geq 7$, as $A$ can not be an edge resolving set. This altogether leads to $\B_e(\Omega) = 7$.\\

Moreover, the set $T =\{b_0\}$ is a vertex-edge dominating set, as it dominates all the edges but it does not resolve all the edges of $\Omega$.
The set $A =\{a_1, a_2, a_3, a_4, a_5, a_6, a_7\}$ is an edge resolving set, it resolves all the edges and it dominates all the edges as well.
So, the set $A=\{a_1, a_2, a_3, a_4, a_5, a_6, a_7\}$ is a vertex-edge dominant edge resolving set, and, we have $\g_{emd}(\Omega)=7$.
Thus, we have the following remark.
\begin{remark}\label{remark02}
The non-tree bipartite graph $\Omega$ in Figure \ref{fig:gamma1} satisfies $\g_{md}(\Omega)<\g_{emd}(\Omega)$. More precisely, we have
$\g_{md}(\Omega)=6$ and $\g_{emd}(\Omega)=7$.
\end{remark}

Finally, we construct a non-tree bipartite graph $\Pi$ satisfying $\g_{md}(\Pi)=\g_{emd}(\Pi)$.
Consider the graph $\Pi$ in Figure \ref{fig:gamma2} having $V(\Pi)=\{a_1,a_2,a_3,a_4\}\cup\{b_1,b_2\}$ and
$E(\Pi)=\{a_ib_1,a_ib_2:1\leq i\leq4\}$.

\begin{figure}[htbp!]
\centering
  \includegraphics[width=4cm]{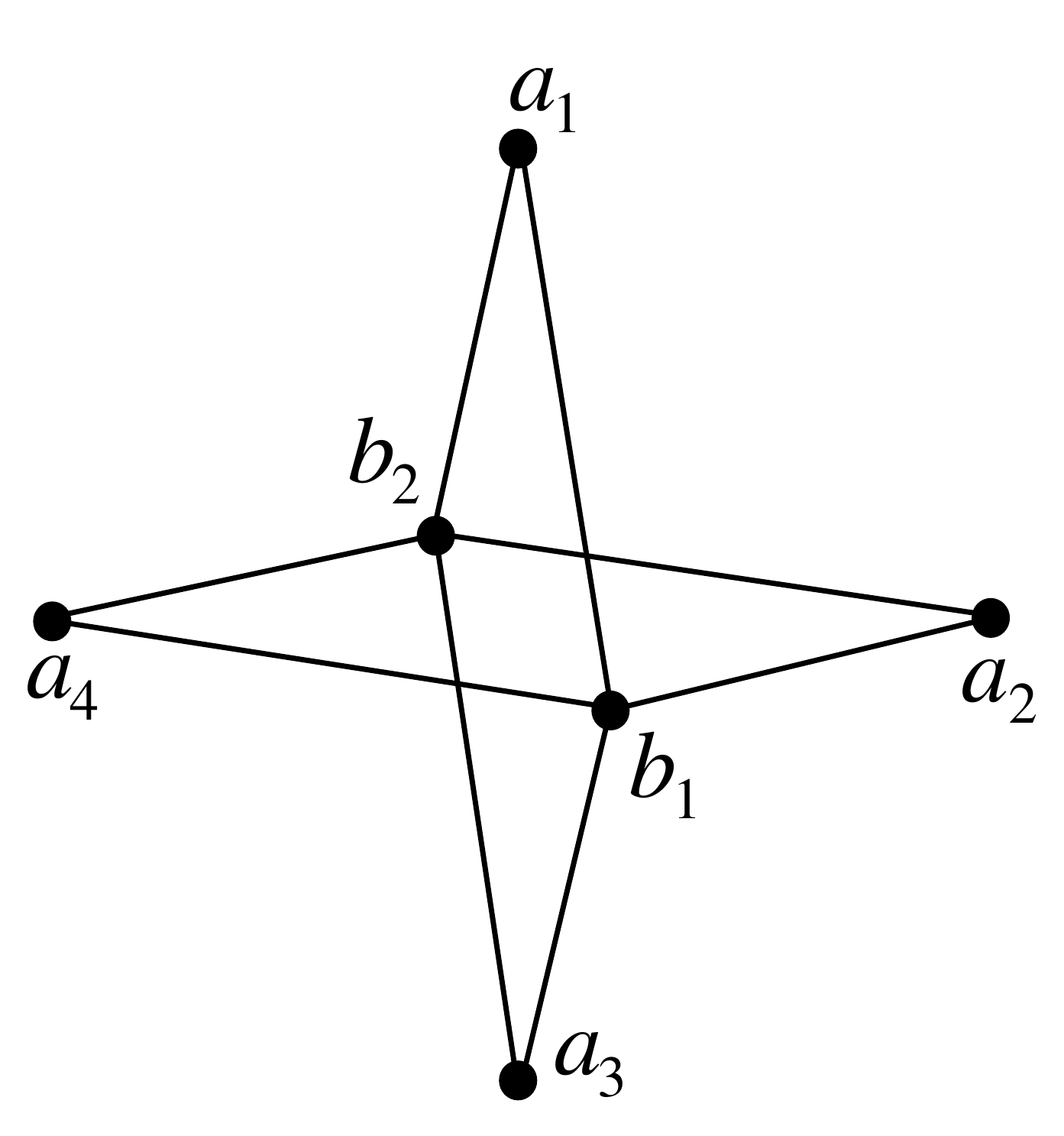}
  \caption{The non-tree bipartite graph $\Pi$ satisfying $\g_{md}(\Pi)=\g_{emd}(\Pi)$.}\label{fig:gamma2}
\end{figure}
Next, we show that $\g_{md}(\Pi)=\g_{emd}(\Pi)=4$, thus, satisfying $\g_{md}(\Pi)=\g_{emd}(\Pi)$.
Note that the set $B =\{a_1,a_2,a_3,b_1\}$ is a resolving set of $\Pi$. Since, $\Pi$ is not a path graph, we have $\B(\Pi)\geq 2$.
The representations of all vertices of $\Pi$ with respect to the set $B =\{a_1, a_2, a_3, b_1\}$ are given in the Table \ref{tab:r12}.

\begin{table}[htbp!]
		\centering
        \begin{tabular}{|c|c|c|c|c|c|c|c|c|c|}
		\hline
	    Vertices &$a_1$ & $a_2$ & $a_3$& $a_4$ &$b_1$ &$b_2$ \\ \hline
		${\small r(.,.)}$ & $(0,2,2,1)$& $(2,0,2,1)$& $(2,2,0,1)$& $(2,2,2,1)$& $(1,1,1,0)$ & $(1,1,1,2)$\\ \hline
        \end{tabular}
        \vspace*{0.3cm}
        \caption{The representations of all vertices of $\Pi$ with respect to the set $B=\{a_1, a_2, a_3, b_1\}$.}\label{tab:r12}
\end{table}
Table \ref{tab:r12} shows the representations of all the vertices of $\Pi$ with respect to $B$ are unique.
Thus, we deduce that $\B(\Pi) \leq 4$. Suppose that $\B(\Pi)=3$, and let $B$ be a resolving set of cardinality 3.
We divide our discussion into a number of cases:

\noindent{\textbf{Case 1:} Assume that $B=\{a_1, a_2, a_3\}$. Then, $r(b_1| B) = r(b_2| B) = (1,1,1)$, which is contradicts to our assumption.

\noindent{\textbf{Case 2:} Assume that $B=\{a_1, a_2, a_3, b_1\} \setminus \{ a_p \}$ where $p=1,2,3$. Then, we have $r(a_p| B) = r(a_4| B) = (2,2,1)$,
which is contradicts to our assumption.

Since $B$ could not be a resolving set, we obtain that $\B(\Pi) \geq 4$, which then leads to $\B(\Pi) = 4$.
The set $S=\{a_1,b_1\}$ is dominating set, as it dominates all the vertices but it does not resolve all the vertices of $\Pi$.
The set $B =\{a_1, a_2, a_3, b_1\}$ is a resolving set, as it resolve all the vertices as well as it dominates all the vertices.
So, the set $B=\{a_1, a_2, a_3, b_1\}$ is a dominant resolving set and we obtain that $\g_{md}(\Pi)=4$.

Next, we show that $\g_{emd}(\Pi)=4$.
Note that the set $A = \{a_1, a_2, a_3, b_1\}$ is an edge resolving set of $\Pi$.
The representations of all edges of $\Pi$ with respect to the set $A =\{a_1, a_2, a_3, b_1\}$ are given in the Table \ref{tab:er2}.

\begin{table}[htbp!]
		\centering
        \begin{tabular}{|c|c|c|c|c|c|c|c|c|c|}
		\hline
	    Edges &$a_1b_1$ & $a_2b_1$ & $a_3b_1$& $a_4b_1$&$a_1b_2$ & $a_2b_2$ & $a_3b_2$& $a_4b_2$\\ \hline
		${\small r(.,.)}$ & $(0,1,1,0)$& $(1,0,1,0)$& $(1,1,0,0)$& $(1,1,1,0)$& $(0,1,1,1)$& $(1,0,1,1)$ & $(1,1,0,1)$& $(1,1,1,1)$\\ \hline
        \end{tabular}
        \vspace*{0.3cm}
        \caption{The representations of all edges of $\Pi$ with respect to the set $A =\{a_1, a_2, a_3, b_1\}$.}\label{tab:er2}
\end{table}
Table \ref{tab:er2} shows the representations of all the edges of $\Pi$ with respect to $A$ are unique. Since no two edges have the
same representation with respect to $A$,  we deduce that $\B_e(\Pi) \leq 4$. Suppose that $\B_e(\Pi) = 3$ and let
$A$ be the corresponding edge resolving set in $\Pi$. We divide our discussion into the following two cases:

\noindent{\textbf{Case 1:} Assume that $A=\{a_1, a_2, a_3\}$. Then we have $r(a_1b_1| A) = r(a_1b_2| A) = (0,1,1)$,
$r(a_2b_1| A) = r(a_2b_2| A) = (1,0,1)$, $r(a_3b_1| A) = r(a_3b_2| A) = (1,1,0)$ and $r(a_4b_1| A) = r(a_4b_2| A) = (1,1,1)$,
which clearly arise a contradiction.

\noindent{\textbf{Case 2:} Assume that $A=\{a_1, a_2, a_3, b_1\} \setminus \{ a_p \}$ where $p=1,2,3$. Then, $r(a_pb_2| A) = r(a_4b_2| A) = (1,1,1)$,
which is a contradiction to our assumption.

Since $A$ could not be an edge resolving set, we obtain that $\B_e(\Pi) \geq 4$, which then leads to $\B_e(\Pi) = 4$.
The set $T =\{b_1\}$ is a vertex-edge dominating set, as it dominates all the edges but it does not resolve all the edges of $\Pi$.
The set $A =\{a_1, a_2, a_3, b_1\}$ is an edge resolving set, as it resolves all the edges as well as it dominates all the edges.
So, the set $A=\{a_1, a_2, a_3, b_1\}$ is vertex-edge dominant edge resolving set and we obtain that $\g_{emd}(\Pi)=4$.
Thus, we have the following remark:
\begin{remark}\label{remark03}
The non-tree bipartite graph $\Pi$ in Figure \ref{fig:gamma2} satisfies $\g_{md}(\Pi)=\g_{emd}(\Pi)=4$.
\end{remark}
Based on Remarks \ref{remark01}, \ref{remark02} and \ref{remark03}, we conclude that non-tree bipartite
graphs are not comparable with respect to the dominant metric dimension and the vertex-edge dominant edge metric dimension
of graphs.

\section{Concluding remarks and further open problems}\label{conc}
This paper introduces a new parameter related to vertex-edge domination and distances in a graph.
The vertex-edge dominant edge metric dimension $\g_{emd}(\G)$, in fact, combines the theory of vertex-edge domination and
edge resolvability of graphs. The parameter $\g_{emd}(\G)$ has been calculated for classical families such as paths, cycles, complete graphs,
complete bipartite graphs, wheel graphs and fan graphs. Some general results and bounds are provided. The most
significant contribution is that the vertex-edge dominant edge metric dimension $\g_{emd}(\G)$ is not comparable,
in general, with its vertex-version counterpart i.e. the dominant metric dimension $\g_{md}(\G)$. In fact, for a
graph $\G$, all three cases i.e. $\g_{md}(\G)>\g_{emd}(\G)$, $\g_{md}(\G)<\g_{emd}(\G)$, or $\g_{md}(\G)=\g_{emd}(\G)$
can occur.
Upon considering the class of bipartite graphs, we show that
$\g_{emd}(T_n)$ of a tree $T_n$ is always less than or equal to its dominant metric dimension.
However, we show that for non-tree bipartite graphs, the parameter is not comparable just like general graphs.

Based on the results in this paper, we would like to propose the following open questions which arise naturally.

\begin{problem}
Characterize graphs $\G$ satisfying one of the following three cases:
\begin{itemize}
\item[\emph{(i)}] $\g_{md}(\G)>\g_{emd}(\G)$,
\item[\emph{(ii)}] $\g_{md}(\G)=\g_{emd}(\G)$,
\item[\emph{(iii)}] $\g_{md}(\G)<\g_{emd}(\G)$.
\end{itemize}
\end{problem}
Theorem \ref{treescomparison} shows that trees are comparable with respect to their vertex-edge dominant edge
metric dimension. However, it motivates the following problem.
\begin{problem}
Let $T_n$ be a tree on $n\geq2$ vertices. Characterize all trees $T_n$ with $\g_{md}(T_n)=\g_{emd}(T_n)$.
\end{problem}

Although, the dominant metric dimension and the vertex-edge dominant edge metric dimension are not comparable,
in general, there might exist a relationship between these parameters for special classes of graph such trees,
unicyclic or bipartite graphs etc.
\begin{problem}
Is there any relationship between the dominant metric dimension $\g_{md}(\G)$ and $ve-$dominant edge metric
dimension $\g_{emd}(\G)$  for special classes of graph,
unicyclic or bipartite graphs etc.?
\end{problem}

\begin{problem}
Let $\G$ be a graph with order $n$ and size $m$. Let $\delta(\G)$ (resp. $\Delta(\G)$) be the minimum degree (resp. maximum
degree) of $\G$. Derive upper/lower bounds for $\g_{emd}(\G)$ in terms of $n,m,\delta(\G)$ or $\Delta(\G)$?
\end{problem}

\section{Declarations}

\subsection{Funding}
Sakander Hayat is supported by the Higher Education Commission, Pakistan under grant number
20-11682/NRPU/RGM/R\&D/HEC/2020.

\subsection{Competing Interests}
There are no competing interests.

\subsection{Author Contributions}
All authors contributed equally to this paper.

\subsection{Data Availability}
There is no data associated with this paper.


\end{document}